\renewcommand{\d}{\,\mathrm{d}}
\newcommand{\E}[1]{\mathbb{E}\left[#1\right]} 
\newcommand{\no}[1]{\Vert#1\Vert} 
\newcommand{\nos}[1]{\Vert#1\Vert^2} 
\newcommand{\be}[1]{\vert#1\vert} 
\newcommand{\bes}[1]{\vert#1\vert^2} 
\renewcommand{\d}{\,\mathrm{d}}
\renewcommand{\E}[1]{\mathbb{E}\left[#1\right]} 
\renewcommand{\no}[1]{\Vert#1\Vert} 
\renewcommand{\nos}[1]{\Vert#1\Vert^2} 
\renewcommand{\be}[1]{\vert#1\vert} 
\renewcommand{\bes}[1]{\vert#1\vert^2} 
\newcommand{\fe}[1]{\frac{\nabla #1}{ \sqrt{ \vert \nabla #1 \vert^2 +\epsilon^2}}} 
\newcommand{\R}{\mathbb{R}}
\newcommand{\N}{\mathbb{N}}
\newcommand{\into}{\int\limits_{\mathcal{O}}}
\renewcommand{\epsilon}{\varepsilon}
\newcommand{\eps}{\epsilon}
\newcommand{\ska}[1]{\left( #1 \right)} 
\renewcommand{\div}{\mathrm{div}}
\newcommand{\intt}{\int_0^t}
\newcommand{\F}{\mathcal{F}}
\renewcommand{\Xi}{X_{\epsilon,h}^{i}}
\newcommand{\Xmin}{X_{\epsilon,h}^{i-1}}
\newcommand{\Yi}{X_{\eps,\delta,n,h}^{i}} 
\newcommand{\Ymin}{X_{\eps,\delta,n,h}^{i-1}} 
\newcommand{\Zi}{Z^{i}_{\epsilon,h}}
\newcommand{\Zmin}{Z^{i-1}_{\epsilon,h}}
\newcommand{\Hz}{\mathbb{H}^1_0}
\renewcommand{\L}{\mathbb{L}^2}
\newcommand{\Hm}{\mathbb{H}^{-1}}
\renewcommand{\phi}{\varphi}
\renewcommand{\F}{\mathcal{F}}
\renewcommand{\P}{\mathbb{P}}
\newcommand{\vh}{v_h}
\renewcommand{\O}{\mathcal{O}}
\newcommand{\D}{\O}
\newcommand{\Xdn}{X^{\epsilon,\delta}_{n}}
\newcommand{\Xdno}{X^{\epsilon_1,\delta}_{n}}
\newcommand{\Xdnt}{X^{\epsilon_2,\delta}_{n}}
\newcommand{\Xdnu}{X^{\epsilon,\delta_1}_{n_1}}
\newcommand{\Xdnd}{X^{\epsilon,\delta_2}_{n_2}}
\newcommand{\Xdon}{X^{\epsilon,\delta_1}_{n}}
\newcommand{\Xdtn}{X^{\epsilon,\delta_2}_{n}}
\newcommand{\Xdnm}{X^{\epsilon,\delta}_{n,m}}
\newcommand{\Xe}{X^{\epsilon}}
\newcommand{\Xed}{X^{\epsilon,\delta}_n}
\newcommand{\Yc}{\overline{X}_{\tau,h}^{\epsilon,\delta,n}}
\newcommand{\Ycm}{\overline{X}_{\tau_-,h}^{\epsilon,\delta,n}}
\newcommand{\Xc}{\overline{X}_{\tau,h}^{\epsilon}}
\renewcommand{\F}{\mathcal{F}}
\renewcommand{\P}{\mathbb{P}}
\renewcommand{\O}{\mathcal{O}}
\newcommand{\Aed}{A^{\epsilon,\delta}}
\newcommand{\Je}{\mathcal{J}_{\epsilon,\lambda}}
\newcommand{\J}{\mathcal{J}_{\lambda}}
\newcommand{\Jeps}{\mathcal{J}_\eps}
\newcommand{\weak}{\rightharpoonup}
\newtheorem{thms}{Theorem}[section]
\newtheorem{defs}{Definition}[section]
\newtheorem{cors}{Corollary}[section]
\newtheorem{props}{Proposition}[section]
\newtheorem{bems}{Remark}[section]
\newtheorem{lems}{Lemma}[section]
\newcommand{\xx}{x}
\newcommand{\Pm}{\mathcal{P}_m}
\begin{document}
\title[Numerical approximation of the stochastic TV flow]{Convergent numerical approximation of the stochastic total variation flow}

\author{\v{L}ubom\'{i}r Ba\v{n}as}
\address{Department of Mathematics, Bielefeld University, 33501 Bielefeld, Germany}
\email{banas@math.uni-bielefeld.de}
\author{Michael R\"ockner}
\address{Department of Mathematics, Bielefeld University, 33501 Bielefeld, Germany}
\email{roeckner@math.uni-bielefeld.de}
\author{Andr\'e Wilke}
\address{Department of Mathematics, Bielefeld University, 33501 Bielefeld, Germany}
 \email{ awilke@math.uni-bielefeld.de}

\begin{abstract}
We study the stochastic total variation flow (STVF) equation with linear multiplicative noise.
By considering a limit of a sequence of regularized stochastic gradient flows with 
respect to a regularization parameter $\eps$ we obtain the existence of a unique variational solution of the STVF equation which
satisfies a stochastic variational inequality.
We propose an energy preserving fully discrete finite element approximation for the regularized gradient flow equation
and show that the numerical solution converges to the solution of the unregularized STVF equation.
We perform numerical experiments to demonstrate the practicability of the proposed numerical approximation.

{\bf This paper contains a mistake: in the proof of Lemma~\ref{Lemma_Discret_Laplace} the last inequality is not valid. Meanwhile, this mistake has been
fixed in \cite{stvf_erratum} for a slightly modified numerical approximation in spatial dimension $d=1$.
For $d\geq 1$ the validity of the estimate in Lemma~\ref{Lemma_Discret_Laplace} is still open but
the convergence of the numerical approximation can be shown by a different approach, see \cite{stvf_hd}.}
\end{abstract}

\maketitle

\section{Introduction}
We study numerical approximation of the stochastic total variation flow (STVF) 
\begin{align}\label{TVF}
\d X&= \div\left(\frac{\nabla X}{\be{\nabla X}}\right) \d t -\lambda (X - g) \d t +X\d W, &&\text{in } (0,T)\times \O, \nonumber\\
X & = 0 && \text{on } (0,T)\times \partial \O, \\
X(0)&=x_0 &&\text{in } \O, \nonumber
\end{align}
where $\O \subset \R^d$, $d\geq 1$ is a bounded, convex domain with a piecewise $C^{2}$-smooth boundary $\partial \O$,
and $\lambda \geq 0$, $T>0$ are constants. We assume that $x_0,\, g \in \L$ and consider
a one dimensional real-valued Wiener process $W$, for simplicity; generalization for a sufficiently regular trace-class noise is straightforward.

Equation (\ref{TVF}) can be interpreted as a stochastically perturbed gradient flow of the penalized total variation energy functional
\begin{align}\label{jfunc}
\J(u):= \into \be{\nabla u} \d \xx +\frac{\lambda}{2} \into \bes{u-g} \d \xx.
\end{align}
The minimization of above functional, so-called ROF-method, is 
a prototypical approach for image denoising, cf. \cite{ROF};
in this context the function $g$ represents a given noisy image and $\lambda$  serves as a penalization parameter. 
Further applications of the functional include, for instance, elastoplasticity and the modeling of damage and fracture,
for more details see for instance \cite{bm16} and the references therein.

The use of stochastically perturbed gradient flows has proven useful in image processing.
Stochastic numerical methods for models with nonconvex energy functionals 
are able to avoid local energy minima and thus achieve faster convergence and/or 
more accurate results than their deterministic counterparts;
see \cite{kp06} which applies stochastic level-set method in image segmentation, 
and \cite{swp14} which uses stochastic gradient flow of a modified (non-convex) total variation energy functional for binary tomography.

Due to the singular character of total variation flow (\ref{TVF}), it is convenient to perform numerical simulations using a regularized problem
\begin{align}\label{reg.TVF}
 \d \Xe&=  \div\left(\frac{\nabla \Xe}{\sqrt{|\nabla \Xe |^2+\epsilon^2}}\right)\d t-\lambda(\Xe-g)\d t+\Xe\d W &&\text{in } (0,T)\times \O, \nonumber\\
 \Xe & = 0 && \text{on } (0,T)\times \partial \O, \\
 \Xe(0)&=x_0 &&\text{in } \O\,,  \nonumber
\end{align}
with a regularization parameter $\epsilon >0$. In the deterministic setting ($W\equiv 0$) equation \eqref{reg.TVF} corresponds to the gradient flow of the regularized energy functional
\begin{align}\label{def_jepslam}
\Je(u):= \into \sqrt{\bes{\nabla u} +\epsilon^2} \d \xx + \frac{\lambda}{2} \into \bes{u-g} \d \xx.
\end{align}  
It is well-known that the minimizers of the above regularized energy functional converge
to the minimizers of (\ref{jfunc}) for $\epsilon \rightarrow 0$, cf. \cite{Prohl_TVF_numerics} and the references therein.

{
Owing to the singular character of the diffusion term in (\ref{TVF})
the classical variational approach for the analysis of stochastic partial differential equations (SPDEs), see e.g. \cite{kr_07}, \cite{Roeckner_book}, is not applicable to this problem.  
To study well-posedeness of singular gradient flow problems it is convenient to apply the solution framework developed in \cite{Roeckner_TVF_paper} 
which characterizes the solutions of (\ref{TVF}) as stochastic variational inequalities (SVIs).
In this paper, we show the well posedness of SVI solutions using the practically relevant regularization procedure (\ref{reg.TVF})
which, in the regularization limit, yields a SVI solution in the sense of \cite{Roeckner_TVF_paper}.
Throughout the paper, we will refer to the solutions which satisfy a stochastic variational inequality 
as SVI solutions, and to the classical SPDE solutions as variational solutions. 
Convergence of numerical approximation of (\ref{reg.TVF}) in the deterministic setting ($W\equiv 0$)
has been shown in \cite{Prohl_TVF_numerics}. 
 Analogically to the deterministic setting, we construct an implementable finite element approximation of the problem (\ref{TVF})
via the numerical discretization of the regularized problem (\ref{reg.TVF}). The scheme is implicit in time
and preserves the gradient structure of the problem, i.e., it satisfies a discrete energy inequality.
The deterministic variational inequality framework
used in the the numerical analysis of \cite{Prohl_TVF_numerics} is not directly transferable to the stochastic setting.
{Instead, we show the convergence of the proposed numerical approximation of (\ref{reg.TVF}) to the SVI solution of (\ref{TVF}) via an additional
regularization step on the discrete level. The convergence analysis of the discrete approximation is inspired by the analytical approach of 
\cite{Gess_Stability} where the SVI solution concept was applied to the stochastic $p$-Laplace equation.} 
As far as we are aware, the present work is the first to show convergence of implementable numerical approximation for singular stochastic gradient flows
in the framework of stochastic variational inequalities.

The paper is organized as follows. In Section~\ref{sec_not} we introduce the notation and state some auxiliary results.
The existence of a unique SVI solution of the regularized  stochastic TV flow (\ref{reg.TVF})
and its convergence towards a unique SVI solution of (\ref{TVF}) for $\eps\rightarrow 0$
is shown in Section~\ref{sec_exist}.
In Section~\ref{sec_num} we introduce a fully discrete finite element scheme for the regularizared problem (\ref{reg.TVF})
and show its convergence to the SVI solution of (\ref{TVF}).
Numerical experiments are presented in Section~\ref{sec_sim}.

\section{Notation and preliminaries}\label{sec_not}
Throughout the paper we denote by $C$ a generic positive constant that may change from line to line. 
For $1\leq p \leq \infty  $, we denote by $(\mathbb{L}^p,\no{\cdot}_{\mathbb{L}^p})$ the standard spaces of $p$-th order integrable functions on $\O$,
and use $\no{\cdot}:= \no{\cdot}_{\L}$ and $(\cdot,\cdot):=(\cdot,\cdot)_{\L}$ for the $\L$-inner product. 
For $k \in \N$ we denote the usual Sobolev space  on $\O$ as $(\mathbb{H}^k,\no{\cdot}_{\mathbb{H}^k})$,
and $(\Hz,\no{\cdot}_{\Hz})$ stands for the $\mathbb{H}^1$ space with zero trace on $\partial \O$ with its dual space $(\Hm,\no{\cdot}_{\Hm})$. 
Furthermore, we set $\langle \cdot ,\cdot \rangle:=\langle \cdot ,\cdot \rangle_{\Hm \times \Hz}$, where $\langle \cdot ,\cdot \rangle_{\Hm \times \Hz}$ 
is the duality pairing between $\Hz$ and $\Hm$.  The functional (\ref{def_jepslam}) with 
$\lambda=0$ will be denoted as $\mathcal{J}_\eps := \mathcal{J}_{\eps,0}$.
We say that a {mapping $X: \Omega\times(0,T) \rightarrow \L$}
is $\F_t$-progressively measurable if $X \mathbbm{1}_{[0,t]}$ is $\F_t\otimes \mathcal{B}([0,t])$-measurable for all $t \in [0,T]$.

For the convenience of the reader we state some basic definitions below.
\begin{defs}\label{resolvent and Yosida approximation}
Let $\mathbb{H}$ be a real Banach space, $A :D(A)\rightarrow \mathbb{H}$ a linear operator and $\rho(A)$ its resolvent set.
For a real number $\xi \in \rho(A)$ we define the resolvent $R_{\xi}: \mathbb{H} \rightarrow \mathbb{H}$ of $A$ as 
\begin{align*}
R_{\xi}(x):=( I-\xi A)^{-1}x\,.
\end{align*}
Furthermore we define the Yosida approximation of $A$ as
\begin{align*}
T_{\xi}(x):=AR_{\xi}=\frac{1}{\xi}(I- R_{\xi})x.
\end{align*}
\end{defs}

\begin{defs}\label{orth.Proj}
The mapping $\mathcal{P}_m : \L \rightarrow \mathbb{V}_m\subset\L$ which satisfies
\begin{align*}
\ska{w-\mathcal{P}_m w,v_m}=0 ~~ \forall v_m \in \mathbb{V}_m.
\end{align*}
defines the $\L$-orthogonal projection onto $\mathbb{V}_m$.
\end{defs} 

\begin{defs}\label{Bounded Variation} 
 A function $u \in L^1(\O)$ is called a function of bounded variation, if its total variation
 \begin{align}\label{total variation}
 \into \be{\nabla u}\d\xx  := \sup\left\{-\into u\, \div\, \mathbf{v} \d\xx;~ \mathbf{v} \in C^{\infty}_0(\O,\R^d), ~\no{\mathbf{v}}_{L^{\infty}}\leq 1\right\},
 \end{align}
 is finite. The space of functions of bounded variations is denoted by $BV(\O)$.

For $u \in BV(\O)$ we denote
$$
 \into \sqrt{\be{\nabla u}^2 + \eps^2}\d\xx  := \sup\left\{\into \Big(-u\, \div\, \mathbf{v} + \eps\sqrt{1-|\mathbf{v}(\xx)|^2}\Big)\d\xx;~ \mathbf{v} \in C^{\infty}_0(\O,\R^d), ~\no{\mathbf{v}}_{L^{\infty}}\leq 1\right\}\,.
$$
\end{defs}

The following proposition plays an important role in the analysis below; the proposition
holds for convex domains with piecewise smooth boundary, which includes the case of practically relevant
polygonal domains, cf. \cite[Proposition 8.2 and Remark 8.1]{Roeckner_TVF_paper}.
\begin{props}\label{Resolvent_estimate}
Let $\mathcal{O}\subset\mathbb{R}^d$, $d\geq 1$ be a bounded domain with a piecewise $C^2$-smooth and convex boundary.
Let $g :[0,\infty) \rightarrow [0,\infty)$ be a continuous  and convex function of at most quadratic growth such that $g(0)=0$,
then it holds 
\begin{align}
\into g (\be{\nabla R_{\xi}(y)}) \d \xx \leq \into g (\be{\nabla y}) \d \xx, ~~\forall y \in \Hz.
\end{align}
\end{props}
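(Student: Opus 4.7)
I interpret $R_\xi = (I-\xi\Delta)^{-1}$ as the resolvent of the Dirichlet Laplacian on $\O$, so $u := R_\xi(y)\in\Hz$ solves $u-\xi\Delta u = y$ in $\O$ with $u=0$ on $\partial\O$. The plan is to establish the inequality first for smooth $y$ on a smooth strictly convex domain, then extend by density. Elliptic regularity on the convex piecewise-$C^2$ domain $\O$ provides $u\in\mathbb{H}^2$, which is enough to justify the integrations by parts below. Continuity of $R_\xi$ on $\Hz$ combined with continuity of the functional $\Phi(v):=\into g(\be{\nabla v})\,\d\xx$ on $\Hz$ (from the at most quadratic growth of $g$) will handle the passage to the limit in $y$.

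The core observation is a convex-analysis identity. Since $g$ is convex, nonnegative, and vanishes at $0$, it is nondecreasing, so $G(z):=g(\be{z})$ is convex on $\R^d$ with $\nabla G(z)=g'(\be{z})\,z/\be{z}$ (extended by $0$ at $z=0$). By convexity of $G$,
$$
G(\nabla y)-G(\nabla u)\ge \nabla G(\nabla u)\cdot\nabla(y-u).
$$
Integrating over $\O$ and using the resolvent identity $y-u=-\xi\Delta u$ reduces the problem to
$$
I := \into \nabla G(\nabla u)\cdot\nabla\Delta u\,\d\xx \;\le\; 0,
$$
from which $\Phi(y)-\Phi(u)\ge -\xi I\ge 0$ is immediate.

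To show $I\le 0$, I integrate by parts using $\Delta u=\sum_j\partial_j^2 u$. This produces a volume term $-\into \sum_j(\nabla\partial_j u)^\top D^2G(\nabla u)(\nabla\partial_j u)\,\d\xx$, which is nonpositive because the Hessian $D^2G(\nabla u)$ is positive semidefinite (its eigenvalues $g''(\be{\nabla u})$ and $g'(\be{\nabla u})/\be{\nabla u}$ are nonnegative by convexity and monotonicity of $g$). The boundary term $\int_{\partial\O}\nabla G(\nabla u)\cdot(D^2 u\cdot\nu)\,\d S$ simplifies using $\nabla u=(\partial_\nu u)\nu$ on $\partial\O$ (forced by $u\equiv 0$ on $\partial\O$) to $\int_{\partial\O}g'(\be{\partial_\nu u})\,\mathrm{sgn}(\partial_\nu u)\,\partial_{\nu\nu}u\,\d S$. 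Since $g'\ge 0$, it suffices to observe that $(\partial_\nu u)\partial_{\nu\nu}u=\tfrac{1}{2}\partial_\nu\be{\nabla u}^2$, and this is nonpositive by the classical consequence of convexity of $\O$ (nonnegativity of the second fundamental form of $\partial\O$, which forces $\partial_\nu\be{\nabla u}^2\le 0$ on $\partial\O$ whenever $u=0$ on $\partial\O$).

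The hard part will be the boundary term: it is the single place where convexity of $\O$ is used, and making the inequality $\partial_\nu\be{\nabla u}^2\le 0$ rigorous at the non-$C^2$ points of $\partial\O$ requires either an approximation of $\O$ by an exhausting sequence of smooth strictly convex subdomains or a delicate local analysis at the corners. I would follow the argument of \cite[Proposition 8.2 and Remark 8.1]{Roeckner_TVF_paper} to complete this step rigorously.
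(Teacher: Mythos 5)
The paper does not actually prove this proposition: it is imported from \cite[Proposition 8.2 and Remark 8.1]{Roeckner_TVF_paper}, so the benchmark is the argument there, which your outline essentially reproduces. Your reduction is correct up to and including the volume term: convexity of $G(z)=g(\be{z})$ gives $\Phi(y)-\Phi(u)\ge -\xi\into \nabla G(\nabla u)\cdot\nabla\Delta u\,\d\xx$ for $u=R_\xi(y)$, and the Hessian computation makes the interior contribution of the integration by parts nonpositive (modulo a routine mollification of $g$ so that $D^2G$ exists).

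The gap is the boundary term. The inequality you invoke, namely that $\partial_\nu\bes{\nabla u}\le 0$ on $\partial\O$ \emph{whenever} $u=0$ on $\partial\O$ and $\O$ is convex, is false. Take $\O=(0,1)$ and $u(x)=x(1-x)$: then $\bes{u'}=(1-2x)^2$ increases toward both endpoints, so $\partial_\nu\bes{\nabla u}=4>0$ there; correspondingly, for $g(s)=s^2$ your boundary term equals $+8$ and exactly cancels the volume term $-8$, so the decomposition ``both pieces are separately nonpositive'' cannot work for a general $u\in\mathbb{H}^2\cap\Hz$. The correct boundary identity is $\tfrac12\partial_\nu\bes{\nabla u}=\partial_\nu u\,\Delta u-H\,(\partial_\nu u)^2$ on $\partial\O$, where $H\ge 0$ is the mean curvature of the convex boundary; only the second summand has the sign you want, and the first, $\partial_\nu u\,\Delta u$, is exactly what is positive in the example above. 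What rescues the argument is a piece of structure you never use: since $y\in\Hz$ and $u=R_\xi(y)$ solves $u-\xi\Delta u=y$, one has $\Delta u=\xi^{-1}(u-y)\in\Hz$, hence $\Delta u=0$ on $\partial\O$. This kills the term $\partial_\nu u\,\Delta u$ and yields $\tfrac12\partial_\nu\bes{\nabla u}=-H(\partial_\nu u)^2\le 0$, which is the one place where convexity of $\O$ enters (together with the exhaustion by smooth convex subdomains that you correctly anticipate for the non-$C^2$ boundary points). With that repair your proof closes and coincides with the argument of the cited reference.
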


\section{Well posedness of STVF}\label{sec_exist}
In this section we show existence and uniques of the SVI solution of (\ref{TVF}) (see below for a precise definition)
via a two-level regularization procedure. 
 To be able to treat problems with $\L$-regular data, i.e., $x_0 \in L^2(\Omega,\F_0;\L)$, $g\in \L$
we consider a $\Hz$-approximating sequence {$\{x^n_0\}_{n\in \mathbb{N}} \subset L^2(\Omega,\F_0;\Hz)$
s.t.  $x^n_0 \rightarrow x_0$
in $L^2(\Omega,\F_0; \L)$ for $n\rightarrow \infty$ and $\{g^n\}_{n\in \mathbb{N}} \subset \Hz$
s.t.  $g^n \rightarrow g$
in $\L$ for $n\rightarrow \infty$.} We
introduce a regularization of (\ref{reg.TVF}) as
\begin{align}\label{vis.TVF}
 \d \Xdn=&\delta \Delta \Xdn \d t +\div\left(\frac{\nabla \Xdn}{\sqrt{|\nabla \Xdn |^2+\epsilon^2}}\right)\d t \nonumber\\
 &-\lambda(\Xdn-g^n)\d t+\Xdn\d W(t)  &&\text{in }(0,T)\times \O,\\
\Xdn(0)=&x^n_0 &&\text{in } \O,\nonumber
\end{align}
where $\delta>0$ is an additional regularization parameter.

We define the operator $\Aed : \Hz \rightarrow \Hm$ as
\begin{align}\label{Operator}
{\langle\Aed u, v\rangle_{\Hm \times \Hz}=\into \delta \nabla u \nabla v +\fe{u}\nabla v +\lambda (u-   g^n)v \d\xx  ~~\forall  u,v \in \Hz },
\end{align}
and note that \eqref{vis.TVF} is equivalent to
\begin{align}
\d \Xdn +\Aed \Xdn \d t&=\Xdn \d W(t)\,,\\
\Xdn(0)&=x^n_0. \nonumber
\end{align}

The operator $\Aed$ is coercive, demicontinuos and satisfies (cf. \cite[Remark 4.1.1]{Roeckner_book})
\begin{align}
&\langle\Aed(u)-\Aed(v),u-v\rangle_{\Hm \times \Hz}\geq \delta \|\nabla (u-v)\|^2 + \lambda\nos{u-v}, && \forall u,v \in \Hz,\label{Monotonicity}\\
& \no{\Aed(u)}_{\Hm} \leq C(\delta,\lambda,\|g^n\|)(\no{u}_{\Hz}+1), && \forall u \in \Hz.\label{a_bnd}
\end{align}
The following monotonicity property,
which follows from the convexity of the function $\sqrt{\bes{\cdot}+\epsilon^2}$, will be used frequently in the subsequent arguments
\begin{align}\label{eps.convexity.inequality}
&\ska{\fe{X}-\fe{Y},\nabla(X-Y)}
\nonumber\\
=&\ska{\fe{X},\nabla(X-Y)}+\ska{\fe{Y},\nabla(Y-X)}
\\
&\geq \Jeps(X)-\Jeps(Y)+\Jeps(Y)-\Jeps(X)=0.
\nonumber
\end{align}

The existence and uniqueness of a variational solution of {(\ref{vis.TVF})} 
is established in the next lemma; we note that the result only requires $\L$-regularity
of data.
\begin{lems}\label{lem_visc_exist}
For any $\eps, \delta>0$ and $x^n_0 \in L^2(\Omega,\F_0;\Hz)$,  $g^n\in \Hz$ 
there exists a unique variational solution $\Xdn \in L^2(\Omega;C([0,T];\L))$ 
of \eqref{vis.TVF}. Furthermore, there exists a $C\equiv C(T)>0$ such that the following estimate holds
\begin{align*}
\E{\sup\limits_{t \in [0,T]} \nos{\Xdn(t)}} \leq C (\E{\nos{x_0^n}} + \|g^n\|^2).
\end{align*}
\end{lems}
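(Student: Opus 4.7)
The plan is to apply the classical variational SPDE theory on the Gelfand triple $\Hz \hookrightarrow \L \hookrightarrow \Hm$, with drift $\Aed$ and diffusion $B(u):=u$ driven by the one-dimensional Wiener process $W$ (see \cite{Roeckner_book}). The four standard hypotheses are quick to verify: hemicontinuity of $\Aed$ is immediate from its demicontinuity, linear growth is the bound (\ref{a_bnd}), and weak monotonicity combines (\ref{Monotonicity}) with $\nos{B(u)-B(v)}=\nos{u-v}$. For coercivity I would compute
\begin{align*}
2\langle \Aed u, u\rangle_{\Hm\times\Hz}
&= 2\delta\nos{\nabla u} + 2\into \frac{\bes{\nabla u}}{\sqrt{\bes{\nabla u}+\eps^2}}\,\d\xx + 2\lambda\nos{u} - 2\lambda(g^n,u)\\
&\geq 2\delta\nos{\nabla u} + \lambda\nos{u} - \lambda\nos{g^n},
\end{align*}
where the integral term is discarded as nonnegative and Young's inequality is applied to the cross term. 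Combined with $\nos{B(u)}=\nos{u}$, this yields the coercivity bound on the triple. The classical existence theorem then furnishes a unique variational solution $\Xdn \in L^2(\Omega; C([0,T];\L)) \cap L^2(\Omega\times[0,T];\Hz)$.

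For the a priori estimate I would apply It\^o's formula to $t\mapsto \nos{\Xdn(t)}$, producing
$$\nos{\Xdn(t)} = \nos{x^n_0} - 2\intt \langle\Aed\Xdn,\Xdn\rangle_{\Hm\times\Hz}\,\d s + \intt\nos{\Xdn}\,\d s + 2\intt\nos{\Xdn}\,\d W(s).$$
Inserting the coercivity bound above and discarding the nonnegative $2\delta\nos{\nabla\Xdn}$ contribution leads to
$$\nos{\Xdn(t)} \leq \nos{x^n_0} + CT\nos{g^n} + C\intt\nos{\Xdn}\,\d s + 2\intt\nos{\Xdn}\,\d W(s).$$
Taking the supremum over $[0,T]$, passing to expectation and applying the Burkholder-Davis-Gundy inequality bounds the expected supremum of the martingale term by $C\,\E{\sup_{s\in[0,T]}\no{\Xdn(s)}\bigl(\int_0^T\nos{\Xdn}\,\d s\bigr)^{1/2}}$; Young's inequality then absorbs a fraction of $\E{\sup_{s\in[0,T]}\nos{\Xdn(s)}}$ into the left-hand side, and Gronwall's lemma closes the estimate.

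No serious conceptual obstacle arises here: for fixed $\eps,\delta>0$ the operator $\Aed$ is non-singular and the noise is Lipschitz, so the problem sits squarely within the classical variational framework. The delicate singular limits $\eps\to 0$ and $\delta\to 0$ are deferred to subsequent sections. The only technical care points are the precise absorption of the martingale term in the supremum estimate and the handling of the $\lambda g^n$ forcing inside the coercivity bound.
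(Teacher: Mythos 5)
Your proposal is correct and takes essentially the same route as the paper: the paper likewise disposes of existence, uniqueness and the stability estimate by invoking the classical variational SPDE framework of \cite{Roeckner_book} on the strength of the already-established properties \eqref{Monotonicity}--\eqref{a_bnd} of $\Aed$. You merely make explicit the verification of coercivity and the It\^o/Burkholder--Davis--Gundy/Gronwall derivation of the supremum bound, all of which check out and mirror the computations the paper carries out in detail for the $\Hz$-estimate of Lemma~\ref{laplace_energy_estimate}.
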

\begin{proof}[\textbf{Proof of Lemma \ref{lem_visc_exist}}]
On noting the properties (\ref{Monotonicity})-(\ref{a_bnd}) of the operator $\Aed$ for $\eps,\delta>0$
the classical theory, cf. \cite{Roeckner_book},
implies that for any given data $x^n_0 \in L^2(\Omega,\F_0;\Hz)$,  $g^n\in \Hz$ there exists
a unique variational solution $\Xdn \in L^2(\Omega;C([0,T];\L))$ 
of (\ref{vis.TVF}) which satisfies the stability estimate.
\end{proof}
In next step, we  show a priori estimate for the solution of (\ref{vis.TVF}) in stronger norms;
the estimate requires $\Hz$-regularity of the data.
\begin{lems}\label{laplace_energy_estimate}
Let $x_0^n\, \in L^2(\Omega,\F_0;\Hz)$, $g^n \in \Hz$.
There exists a constant $C\equiv C(T)$
such that for any $\eps,\, \delta>0$ the corresponding variational solution $\Xdn$ of \eqref{vis.TVF}  satisfies
\begin{align}
\E{\sup\limits_{t \in [0,T]} \nos{\nabla \Xdn(t)}}+\delta \E{\intt \nos{\Delta \Xdn(s)} \d s} \leq C\left(\E{\nos{x_0^n}_{\Hz}}+\nos{g^n}_{\Hz}\right).
\end{align}
\end{lems}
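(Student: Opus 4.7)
The plan is to apply It\^o's formula to $\nos{\nabla \Xdn}$ at the level of a Galerkin approximation and then pass to the limit, since $\nos{\nabla\cdot}$ is not the $\L$-norm of the natural Gelfand triple $\Hz\subset\L\subset\Hm$ and direct application of the variational It\^o formula is not available. Let $\{e_j\}_{j\in\N}$ be the $\L$-orthonormal eigenbasis of the Dirichlet Laplacian on $\O$ with eigenvalues $\mu_j>0$, set $V_m:=\mathrm{span}(e_1,\dots,e_m)$, and let $X^m(t)\in V_m$ denote the corresponding Galerkin approximation of $\Xdn$. Because each $e_j$ and $\Delta e_j=-\mu_j e_j$ vanishes on $\partial\O$, the same holds for $X^m$ and $\Delta X^m$; this enables testing with $-\Delta X^m$. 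Applying It\^o to the functional $\nos{\nabla\cdot}=\sum_j\mu_j(\cdot,e_j)^2$ on $V_m$ yields
\begin{align*}
d\nos{\nabla X^m} = -2\langle \Aed X^m, -\Delta X^m\rangle\,\d t + 2\nos{\nabla X^m}\,\d W + \nos{\nabla X^m}\,\d t,
\end{align*}
where the last summand is the It\^o correction arising from the multiplicative noise $X^m\,\d W$.

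Using (\ref{Operator}), the drift pairing decomposes as
\begin{align*}
\langle \Aed X^m, -\Delta X^m\rangle = \delta\nos{\Delta X^m} + \into \div\klam{\fe{X^m}}\Delta X^m \,\d\xx + \lambda\nos{\nabla X^m} - \lambda(\nabla g^n, \nabla X^m).
\end{align*}
The $\delta$-term produces the desired integrand on the right-hand side, and the $(\nabla g^n, \nabla X^m)$-contribution is handled by Young's inequality. The main obstacle is establishing the sign $\into\div(\fe{X^m})\,\Delta X^m\,\d\xx\geq 0$.

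I would handle the singular term by a double integration by parts. Since $\Delta X^m=0$ on $\partial\O$, one boundary contribution drops out and the interior integrand reduces to $|D^2 X^m|^2/h - |D^2 X^m\,\nabla X^m|^2/h^3$ with $h:=\sqrt{\bes{\nabla X^m}+\eps^2}$; this is pointwise non-negative by Cauchy-Schwarz ($|D^2 X^m\,\nabla X^m|^2\leq |D^2 X^m|^2\,\bes{\nabla X^m}\leq h^2|D^2 X^m|^2$), which is the pointwise manifestation of the convexity of $s\mapsto\sqrt{\bes{s}+\eps^2}$ already exploited in (\ref{eps.convexity.inequality}). The remaining boundary integrand is treated via the convexity of $\O$: since $X^m=0$ on $\partial\O$ forces $\nabla X^m=(\partial_n X^m)\,n$ there and the identity $\Delta X^m=\partial_n^2 X^m + H\partial_n X^m$ holds on $\partial\O$ with $H\geq 0$ the mean curvature, that boundary term reduces to $\int_{\partial\O} H\,(\partial_n X^m)^2/h\,\d S\geq 0$.

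Finally, I would integrate in time, take expectation, apply the Burkholder-Davis-Gundy inequality to the stochastic integral (absorbing a fraction of $\E{\sup_{t\in[0,T]}\nos{\nabla X^m}}$ into the left-hand side), and invoke Gronwall's lemma to obtain the uniform-in-$m$ bound
\begin{align*}
\E{\sup_{t\in[0,T]}\nos{\nabla X^m(t)}} + \delta\,\E{\intt\nos{\Delta X^m(s)}\,\d s} \leq C\klam{\E{\nos{\nabla x_0^n}} + \nos{\nabla g^n}}.
\end{align*}
Weak lower semicontinuity of the norms, together with the convergence of $X^m$ to the variational solution $\Xdn$ (established by monotonicity arguments as in the proof of Lemma~\ref{lem_visc_exist}), then transfers the estimate to $\Xdn$ and yields the claim.
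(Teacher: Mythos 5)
Your proposal follows the same skeleton as the paper's proof: a Galerkin approximation in the Dirichlet eigenbasis, It\^o's formula for $\nos{\nabla\,\cdot\,}$, a sign argument for the singular term, then Burkholder--Davis--Gundy with $p=1$, Gronwall, and weak lower semicontinuity of the norms. The one point of genuine divergence is the key step, namely the sign of $\ska{\div\fe{X^m},\Delta X^m}$. The paper obtains it abstractly: it writes $-\Delta X^m=\lim_{\xi\to\infty}\tfrac1\xi(I-R_\xi)X^m$ via the Yosida approximation, uses the convexity of $\mathcal J_\eps$ as in \eqref{eps.convexity.inequality}, and then invokes the resolvent contraction $\mathcal J_{\eps}(R_\xi y)\le\mathcal J_{\eps}(y)$ of Proposition~\ref{Resolvent_estimate}. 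You instead carry out the classical double integration by parts, splitting the integral into the pointwise non-negative interior density $|D^2u|^2/h-|D^2u\,\nabla u|^2/h^3$ and a boundary term $\int_{\partial\O}H(\partial_n u)^2/h\,\d S$ that is non-negative by convexity of $\O$.

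Both routes yield the same inequality, but yours is less robust in exactly the regime the lemma is stated for. The paper only assumes a convex, \emph{piecewise} $C^2$ boundary, explicitly to cover polygonal domains (the ones actually used in Section~\ref{sec_sim}). On such domains the mean curvature $H$ is undefined along edges and corners, the Dirichlet eigenfunctions need not be smooth up to the boundary, and the two boundary integrals you manipulate are not individually meaningful without first approximating $\O$ by smooth convex domains and controlling third derivatives of $X^m$ near the corners. Proposition~\ref{Resolvent_estimate} (taken from \cite{Roeckner_TVF_paper}) is precisely the device that packages your curvature computation into a statement valid for piecewise smooth convex boundaries, which is why the paper routes the argument through the resolvent rather than through $H$. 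For a genuinely $C^2$ convex domain your argument is complete and arguably more transparent; as written, it leaves a gap in the piecewise smooth case. The remaining steps (the It\^o correction $\nos{\nabla X^m}\,\d t$, the handling of the $\delta$- and $\lambda$-terms, BDG, Gronwall, and the passage to the limit) match the paper's proof.
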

\begin{proof}[\textbf{Proof of Lemma \ref{laplace_energy_estimate}}]
Let $\{e_i\}_{i=0}^{\infty}$ be an orthonormal basis of eigenfunctions of the Dirichlet Laplacian $-\Delta$ on $\L$
and $\mathbb{V}_m := \text{span}\{e_0,\ldots, e_m\}$. Let $\Pm :\L \rightarrow \mathbb{V}_m$
be the $\L$-orthogonal projection onto $\mathbb{V}_m$. 

For fixed $\eps,\,\delta,\,n$ the Galerkin approximation $\Xdnm \in \mathbb{V}_m$ of $\Xdn$ satisfies
\begin{align}\label{gal.vis.TVF}
\d \Xdnm =& \delta \Delta \Xdnm \d t +\Pm \div\left(\frac{\nabla \Xdnm}{\sqrt{|\nabla \Xdnm |^2+\epsilon^2}}\right)\d t
\nonumber \\
 & -\lambda  (\Xdnm-{\Pm g^n})\d t+ \Xdnm \d W(t),\\
\Xdnm(0)=&\Pm  x^n_0. \nonumber
\end{align}
By standard arguments, cf. \cite[{Theorem 5.2.6}]{Roeckner_book},
there exists a $\Xdn \in L^2(\Omega;C([0,T];\L))$ such that $\Xdnm \weak \Xdn $ in $L^2(\Omega\times(0,T);\L)$
for $m\rightarrow \infty$. 
We use It\^o's formula for  $\nos{\nabla \Xdnm(t)}$ 
to obtain
\begin{align}\label{ito_nabla}
\nonumber
\frac{1}{2} \nos{\nabla \Xdnm(t)}=& \frac{1}{2}\nos{\nabla \Pm  x_0^n}-\delta \intt \nos{\Delta \Xdnm(s)}\d s
\nonumber 
\\
\nonumber
&-\intt \ska{\div \fe{\Xdnm(s)},\Delta \Xdnm(s)}\d s 
\\
&-\lambda\intt \ska{(\Xdnm(s)-{g^n}),\Delta \Xdnm(s)}\d s
\\ \nonumber
&-\intt \ska{\Delta \Xdnm(s), \Xdnm(s)\d W(s)}\d s
\\
\nonumber &+{\frac{1}{2}}\intt \nos{ \Xdnm(s)}_{\Hz}\d s.  
\end{align}
Let $T_\xi$ be the Yosida-approximation
and $R_{\xi}$ the resolvent of the Dirichlet Laplacian $-\Delta$ on $L^2$, respectively; see Definition~\ref{resolvent and Yosida approximation}.
By the convexity, cf. (\ref{eps.convexity.inequality}), we get
\begin{align*}
&\ska{-\Delta \Xdnm(s),\div \fe{\Xdnm(s)}}\\
&=\lim\limits_{\xi \rightarrow \infty}\ska{T_\xi \Xdnm(s),\div \fe{\Xdnm(s)}}\\
&=\lim\limits_{\xi \rightarrow \infty} \frac{1}{\xi}\ska{\Xdnm(s)- R_{\xi}\Xdnm(s),\div \fe{\Xdnm(s)}}\\
&=\lim\limits_{\xi\rightarrow \infty} \frac{1}{\xi}\ska{\nabla R_{\xi}\Xdnm(s)-\nabla \Xdnm(s),\fe{\Xdnm(s)}}\\
&\leq \lim\limits_{\xi \rightarrow \infty} \frac{1}{\xi} \left( \mathcal{J}_{\epsilon}( R_{\xi}\Xdnm(s)) -\mathcal{J}_{\epsilon}( \Xdnm(s))\right)\\
&\leq 0,
\end{align*}
where we used Proposition \ref{Resolvent_estimate} in the last step above.
{The Burkholder-Davis-Gundy inequaltiy for $p=1$ implies that
\begin{align}\label{BDG nabla}
\E{\sup\limits_{t \in [0,T]} \intt \nos{\nabla \Xdnm(s)} \d W(s)} \leq& C\E{ \left(\int_0^T \no{\nabla \Xdnm(s)}^4 \d s\right)^{\frac{1}{2}}} \nonumber \\
  \leq& C\E{ \sup\limits_{t \in [0,T]}\no{\nabla \Xdnm(t)}\left(\int_0^T \no{\nabla \Xdnm(s)}^2 \d s\right)^{\frac{1}{2}} } \\
\leq& \frac{1}{4} \E{\sup\limits_{t \in [0,T]}\nos{\nabla \Xdnm(t)}}  +C\E{\int_0^T \nos{\nabla \Xdnm(s)} \d s}\,.  \nonumber
\end{align}
After taking supremum over $t$ and expectation in \eqref{ito_nabla}, using \eqref{BDG nabla} along with the Tonelli and Gronwall lemmas we obtain}
\begin{align*}
\E{\sup\limits_{t \in [0,T]} \nos{\nabla \Xdnm(t)} +\delta \int_0^T \nos{\Delta \Xdnm(s)}\d s} \leq C(\E{\nos{x_0^n}_{\Hz}} + \nos{g^n}_{\Hz}).
\end{align*}
Hence, from the sequence $\{\Xdnm\}_{m\in\N}$ we can extract a subsequence (not relabeled), s.t. for $m \rightarrow \infty$
\begin{align*}
&\Xdnm \weak \Xdn ~\text{in}~ L^2(\Omega;L^2((0,T);\mathbb{H}^2)\\
&\Xdnm \weak^* \Xdn ~\text{in}~ L^2(\Omega;L^{\infty}((0,T);\Hz)).
\end{align*}
By lower-semicontinuity of the norms, we get
\begin{align*}
\E{\sup\limits_{t \in [0,T]}\frac{1}{2} \nos{\nabla \Xdn(t)} +\delta \int_0^T \nos{\Delta \Xdn(s)}\d s} \leq C(\E{\nos{x_0^n}_{\Hz}}+\nos{g^n}_{\Hz})  .
\end{align*}
\end{proof}
{We define the following functionals
\begin{align*}
\bar{ \mathcal{J}}_{\eps,\lambda}(u)=
\begin{cases}
\mathcal{J}_{\eps,\lambda}(u) + \int_{\partial \O} \be{\gamma_0(u)} \d \mathcal{H}^{n-1} ,~~ \text{for}~ u \in BV(\O)\cap L^2(\O),\\
+\infty  ,~~ \text{for}~ u \in BV(\O)\setminus L^2(\O)
\end{cases}
\end{align*}
and (for $\eps=0$)
\begin{align*}
\bar{ \mathcal{J}}_\lambda (u)=
\begin{cases}
\mathcal{J}_\lambda (u) + \int_{\partial \O} \be{\gamma_0(u)} \d \mathcal{H}^{n-1} , ~~\text{for}~ u \in BV(\O)\cap L^2(\O),\\
+\infty  , ~~\text{for}~ u \in BV(\O)\setminus L^2(\O),
\end{cases}
\end{align*}
where $\gamma_0(u) $ is the trace of $u$ on the boundary and $\d \mathcal{H}^{n-1}$
is the Hausdorff measure.
$\bar{\mathcal{J}}_{\eps,\lambda}$ and $\bar{\mathcal{J}}_{\lambda}$ are  both convex and lower semicontinuous on $L^2(\O)$ and  the lower semicontinuous hulls of $\bar{\mathcal{J}}_{\eps,\lambda}\vert_{\Hz}$ or $\bar{\mathcal{J}}_{\lambda}\vert_{\Hz}$ respectively, cf. \cite[Proposition 11.3.2]{book_attouch}.}
We define the SVI solution of (\ref{reg.TVF}) and (\ref{TVF})
analogically to \cite[Definition~3.1]{Roeckner_TVF_paper} as a stochastic variational inequality.
\begin{defs}\label{def_varsoleps}
Let $0  < T < \infty$, {$\varepsilon \in [0,1]$} and {$x_0 \in L^2(\Omega,\F_0;\L)$ and $g \in \L$}.
Then an $(\F_t)$-{adapted} stochastic process {$\Xe \in L^2(\Omega; C([0,T];\L))\cap  L^1(\Omega; L^1((0,T);BV(\O)))$ 
(denoted by $X \in L^2(\Omega; C([0,T];\L))\cap  L^1(\Omega; L^1((0,T);BV(\O)))$ for $\eps=0$)}
is called an {SVI  solution} of (\ref{reg.TVF}) (or (\ref{TVF}) if $\varepsilon=0$) if $\Xe(0)=x_0$ ($X(0)=x_0$), and
for each $(\F_t)$-progressively measurable process $G\in L^2(\Omega \times (0,T),\L)  $ and for each $(\F_t)$-adapted $\L$-valued process 
{$Z$} with $\P$-a.s. continuous sample paths, {s.t. $Z \in L^2(\Omega \times (0,T);\Hz)$}, which satisfy the equation 
\begin{align}\label{test}
\d Z(t)= -G(t) \d t +Z(t)\d W(t), ~ t\in[0,T],
\end{align}
it holds for {$\eps \in (0,1]$} that
\begin{align}\label{reg.SVI}
\frac{1}{2}& \E{\nos{\Xe(t)-Z(t)}}+\E{\intt {\bar{\mathcal{J}}_{\eps,\lambda}}(\Xe(s)) \d s} \nonumber\\
&\leq  \frac{1}{2} \E{\nos{x_0-Z(0)}}+\E{\intt { \bar{\mathcal{J}}_{\eps,\lambda}}(Z(s)) \d s}  \\
&+ \frac{1}{2}\E{\intt \nos{\Xe(s)-Z(s)} \d s}
+\E{\intt \ska{\Xe(s)-Z(s),G} \d s}\,,\nonumber
\end{align}
and analogically for $\eps=0$ it holds that
\begin{align}\label{SVIeps0}
\frac{1}{2}& \E{\nos{X(t)-Z(t)}}+\E{\intt { \bar{\mathcal{J}}_{\lambda}}(X(s)) \d s} \nonumber\\
&\leq  \frac{1}{2} \E{\nos{x_0-Z(0)}}+\E{\intt { \bar{\mathcal{J}}_{\lambda}}(Z(s)) \d s}  \\
&+\frac{1}{2} \E{\intt \nos{X(s)-Z(s)} \d s}
+\E{\intt \ska{X(s)-Z(s),G} \d s}\nonumber.
\end{align}
\end{defs}

In the next theorem we show the existence and uniqueness of a SVI solution to \eqref{reg.TVF} for $\varepsilon > 0$ 
in the sense of the Definition~\ref{def_varsoleps}.
\begin{thms}\label{Thm_reg.SVI}
Let $0 < T < \infty$ and $x_0 \in L^2(\Omega,\F_0;\L)$, $ g \in \L$.
For each $\eps\in (0,1]$ there exists a unique SVI  solution $\Xe$ of (\ref{reg.TVF}).
Moreover, any two SVI  solutions $\Xe_1,\Xe_2$ with $x_0\equiv x^1_0$, $g\equiv g^1$  and $x_0\equiv x^2_0$, $g\equiv g^2$ satisfy
\begin{align}\label{reg_stability_inequality}
&\E{\nos{\Xe_1(t)-\Xe_2(t)}}\leq C\left(\E{\nos{x^1_0-x^2_0}}+\nos{g^1-g^2} \right )\, ,
\end{align}
for all $t \in [0,T]$.
\end{thms}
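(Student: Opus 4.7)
The approach is a two-level regularization in which the well-posed viscous problem (\ref{vis.TVF}) with $\Hz$-data is used to manufacture a SVI solution of (\ref{reg.TVF}). Starting from the variational solutions $\Xdn$ of Lemma~\ref{lem_visc_exist}, whose $\Hz$-bounds are provided uniformly in $\delta$ by Lemma~\ref{laplace_energy_estimate}, I would first derive an SVI-type inequality directly by It\^o's formula, then pass successively to the limits $\delta\to 0$ and $n\to\infty$ by compactness and lower semicontinuity of $\bar{\mathcal{J}}_{\eps,\lambda}$. Uniqueness and the stability bound (\ref{reg_stability_inequality}) are obtained at the end by using the viscous approximations themselves as admissible test processes in the SVIs of the limit solutions, and symmetrizing.

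The derivation of the SVI for $X = \Xdn$ proceeds by applying It\^o's formula to $\tfrac12\nos{X-Z}$ for an arbitrary test process $Z$ as in (\ref{test}). The cross-variation contributes $\tfrac12\nos{X-Z}ds$, while the deterministic drift splits into three terms which I would bound as
\begin{align*}
\delta(X-Z,\Delta X) &\leq \tfrac{\delta}{2}\nos{\nabla Z} - \tfrac{\delta}{2}\nos{\nabla X}\,,\\
\ska{X-Z,\,\div \fe{X}} &\leq \Jeps(Z) - \Jeps(X)\,,\\
-\lambda(X-Z, X - g^n) &\leq \tfrac{\lambda}{2}\nos{Z-g^n} - \tfrac{\lambda}{2}\nos{X-g^n}\,,
\end{align*}
the first by integration by parts and Young's inequality, the second being exactly the convexity inequality (\ref{eps.convexity.inequality}), the third by expanding the inner product. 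Taking expectation, using BDG to discard the local martingale, and noting that $\Xdn\in\Hz$ so that $\bar{\mathcal{J}}_{\eps,\lambda}(\Xdn)=\mathcal{J}_{\eps,\lambda}(\Xdn)$, yields an inequality of the shape (\ref{reg.SVI}) for $\Xdn$ with $g$ replaced by $g^n$, up to a residual $\tfrac{\delta}{2}\E\intt\nos{\nabla Z}ds$ which vanishes as $\delta\to 0$. For the limit $\delta\to 0$, Lemma~\ref{laplace_energy_estimate} furnishes weak* compactness in $L^2(\Omega;L^\infty(0,T;\Hz))$, and lower semicontinuity of $\bar{\mathcal{J}}_{\eps,\lambda}$ on $\L$ allows one to pass to the limit on the left, producing an SVI for $\Xe_n$ with $\Hz$-data. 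The second limit $n\to\infty$ then follows from a Cauchy estimate in $L^2(\Omega;C([0,T];\L))$ for $\{\Xe_n\}$, obtained by using $Z = X_m^{\eps,\delta}$ (which qualifies as a test process for the SVI of $\Xe_n$, by Lemma~\ref{laplace_energy_estimate}) and symmetrizing in $(n,m)$ before sending $\delta\to 0$.

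The main obstacle is the uniqueness/stability step, since two SVI solutions $\Xe_1, \Xe_2$ need not satisfy (\ref{test}) classically and cannot be fed into each other's SVI directly. The remedy is to insert into the SVI for $\Xe_1$ the viscous approximation $Z_2^{n,\delta}:=X_{n,2}^{\eps,\delta}$ of $\Xe_2$ (and vice versa). By Lemma~\ref{laplace_energy_estimate}, for fixed $\delta>0$ this process lies in $L^2(\Omega\times(0,T); \mathbb{H}^2\cap\Hz)$, so the associated drift $G_2^{n,\delta}=-\delta\Delta Z_2^{n,\delta}-\div \fe{Z_2^{n,\delta}} + \lambda(Z_2^{n,\delta}-g^{n,2})$ is in $L^2(\Omega\times(0,T);\L)$, as the test-process definition demands. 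Adding the two SVI inequalities, the $\bar{\mathcal{J}}_{\eps,\lambda}$ contributions cancel in the limit $\delta\to 0$, $n\to\infty$ by another application of the convexity inequality (\ref{eps.convexity.inequality}), leaving
\begin{equation*}
\E\nos{\Xe_1(t)-\Xe_2(t)} \leq \E\nos{x_0^1-x_0^2} + C\nos{g^1-g^2}\, t + C\intt \E\nos{\Xe_1(s)-\Xe_2(s)}\,ds\,,
\end{equation*}
and Gronwall's lemma yields (\ref{reg_stability_inequality}); uniqueness is the special case $(x_0^1,g^1)=(x_0^2,g^2)$. The delicate point throughout is to perform the two successive limits consistently on both sides of the SVI, ensuring that functional terms behave correctly under weak convergence on the left (via lower semicontinuity of $\bar{\mathcal{J}}_{\eps,\lambda}$) and via convexity cancellation in the symmetrization step.
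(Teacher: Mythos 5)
Your overall architecture coincides with the paper's: derive an SVI for the doubly regularized solutions $\Xdn$ via It\^o's formula and the three drift bounds (which you state correctly), pass to the limits $\delta\to 0$ and $n\to\infty$, and prove uniqueness by inserting viscous approximations of one solution as test processes into the SVI of the other. Two points, however, are genuine gaps rather than omitted routine details.

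First, for the passage $\delta\to 0$ you invoke only weak* compactness from Lemma~\ref{laplace_energy_estimate} together with lower semicontinuity of $\bar{\mathcal{J}}_{\eps,\lambda}$. Lower semicontinuity handles the left-hand side of the SVI, but the right-hand side contains the quadratic term $\E{\intt\nos{\Xdn(s)-Z(s)}\d s}$, for which weak convergence gives an inequality in the wrong direction. One needs strong convergence of $\Xdn$ in $\delta$, which the paper obtains from a Cauchy estimate (It\^o's formula applied to the difference of two viscous solutions, the convexity inequality \eqref{eps.convexity.inequality}, and the uniform $\Hz$-bound of Lemma~\ref{laplace_energy_estimate} to control the mismatched $\delta_1\Delta-\delta_2\Delta$ terms). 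Your Cauchy/symmetrization argument for the $n$-limit is fine, but the same care is required in $\delta$.

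Second, and more seriously, in the uniqueness step you assert that the $\bar{\mathcal{J}}_{\eps,\lambda}$ contributions cancel ``by another application of the convexity inequality \eqref{eps.convexity.inequality}''. That inequality rests on integration by parts for functions in $\Hz$, whereas an SVI solution $X^\eps_1$ is only known to lie in $\L\cap BV(\O)$; the pairing $\ska{X^{\eps}_1 - Z, -\div\fe{Z}}$ cannot be integrated by parts against $\nabla X^\eps_1$ directly. This is exactly where the boundary-trace term in the relaxed functional $\bar{\mathcal{J}}_{\eps,0}$ enters: the paper approximates $X^\eps_1$ by $x_k\in C^{\infty}(\O)\cap BV(\O)$ with $\Je(x_k)\to\Je(X^\eps_1)$, integrates by parts for each $x_k$, bounds the surviving boundary integral by $\int_{\partial\O}\be{\gamma_0(x_k)}\d\mathcal{H}^{n-1}$ (the trace of $Z\in\Hz$ vanishes), and only then passes to the limit to obtain $\ska{X^{\eps}_1 - Z, -\div\fe{Z}}\leq\bar{\mathcal{J}}_{\eps,0}(X^\eps_1)-\bar{\mathcal{J}}_{\eps,0}(Z)$. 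Without this argument the cancellation you rely on is unjustified, and it is precisely the reason the SVI is formulated with $\bar{\mathcal{J}}$ rather than $\mathcal{J}$. (Your symmetrization is harmless but unnecessary: the paper closes the estimate with a one-sided insertion followed by Gronwall's lemma.)
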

\begin{proof}[\textbf{Proof of Theorem \ref{Thm_reg.SVI}}]\hfill \\
We show that for fixed $\epsilon>0$ the sequence $\{\Xdn\}_{\delta,n}$ of variational solutions of (\ref{vis.TVF})
is a Cauchy-sequence w.r.t. $\delta$ for any fixed $n\in \mathbb{N}$, 
and then show that it is a Cauchy-sequence w.r.t. $n$ for $\delta\equiv0$. 

We denote by $\Xdnu,\Xdnd$ the solutions of \eqref{vis.TVF}  
for $\delta\equiv\delta_1$, $\delta\equiv \delta_2$
and {$x_0\equiv x^{n_1}_0\in L^2(\Omega,\F_0;\Hz)$, $x_0\equiv x^{n_2}_0 \in L^2(\Omega,\F_0;\Hz)$}, respectively,
where $x^{n_1}_0$, $x^{n_2}_0$ belong to the $\Hz$-approximating sequence of $x_0\in L^2(\Omega,\F_0;\L)$. 
By It\^o's formula it follows that
\begin{align*}
\frac{1}{2}& \nos{\Xdnu(t)-\Xdnd(t)}\qquad\\
& = \frac{1}{2}\nos{x^{n_1}_0-x^{n_2}_0} +\intt \ska{\delta_1\Delta \Xdnu(s)-\delta_2\Delta\Xdnd(s),\Xdnu(s)-\Xdnd(s)}\d s
\\
&\qquad -\intt \ska{\frac{\nabla \Xdnu(s)}{ \sqrt{ \vert \nabla \Xdnd(s) \vert^2 +\epsilon^2}}-\frac{\nabla \Xdnd(s)}{ \sqrt{ \vert \nabla \Xdnd(s) \vert^2 +\epsilon^2}},\nabla (\Xdnu(s)-\Xdnd(s))}\d s \\
&\qquad -\lambda\intt \nos{(\Xdnu(s)-\Xdnd(s)}\d s +\intt \nos{\Xdnu(s)-\Xdnd(s)}\d W(s)\\
 &\qquad +\intt \nos{\Xdnu(s)-\Xdnd(s)}\d s.
\end{align*}
We note that
\begin{align*}
&\ska{\delta_1\Delta \Xdnu(s)-\delta_2\Delta\Xdnd(s),\Xdnu(s)-\Xdnd(s)}\\
&=-\ska{\delta_1 \nabla \Xdnu(s)-\delta_2\nabla\Xdnd(s),\nabla\Xdnu(s)-\nabla\Xdnd(s)}\\
&\leq C(\delta_1+\delta_2)(\nos{\nabla \Xdnu(s)}+\nos{\nabla \Xdnd(s)}).
\end{align*}
Hence by using the convexity \eqref{eps.convexity.inequality},  Lemma \ref{laplace_energy_estimate}, the Burkholder-Davis-Gundy inequality for $p=1$,
the Tonelli and Gronwall lemmas we obtain
{
\begin{align}\label{delta_datum_estimate}
& \E{\sup\limits_{t \in [0,T]} \nos{\Xdnu(t)-\Xdnd(t)}}
\leq  C\E{\nos{x_0^{n_1}-x_0^{n_2}}}
\nonumber \\ 
&\qquad +C\Big(\E{\nos{x_0^{n_1}}_{\Hz}},\E{\nos{x_0^{n_2}}_{\Hz}},\nos{g^n}_{\Hz}\Big)(\delta_1+\delta_2).
\end{align}
}


Inequality \eqref{delta_datum_estimate} implies for $x_0^{n_1} \equiv x_0^{n_2}\equiv x_0^n$ that
\begin{align*}
\E{\sup\limits_{t \in [0,T]} \nos{\Xdon(t)-\Xdtn(t)}} 
\leq C\left(\E{\nos{x_0^n}_{\Hz}},\nos{g^n}_{\Hz}\right)(\delta_1+\delta_2).
\end{align*}
Hence for any fixed $n$, $\epsilon$ there exists a $\{\F_t \}$-adapted process $\Xe_n \in L^2(\Omega,C([0,T];\L))$, s.t.	
\begin{align}\label{delta_limit}
\lim_{\delta \rightarrow 0} \E{\sup\limits_{t \in [0,T]}\nos{\Xdn(t)-\Xe_n(t)}} \rightarrow 0.
\end{align}
For fixed $n_1$, $n_2$, $\epsilon$ 
we get from \eqref{delta_datum_estimate} using (\ref{delta_limit})
by the lower-semicontinuity of norms that
\begin{align}\label{lim_n}
&\E{\sup\limits_{t \in [0,T]} \nos{X^{\epsilon}_{n_1}(t)-X^{\epsilon}_{n_2}}}
\leq 
\liminf_{\delta\rightarrow 0}\E{\sup\limits_{t \in [0,T]} \nos{X^{\epsilon,\delta}_{n_1}(t)-X^{\epsilon,\delta}_{n_2}}}
\\  \nonumber & \qquad 
\leq\frac{1}{2}\E{\nos{x_0^{n_1}-x_0^{n_2}}}.
\end{align}
Since $x_0^{n_1},\,x_0^{n_2}\rightarrow x_0$ for $n_1,n_2\rightarrow \infty$
we deduce from (\ref{lim_n}) that for any fixed $\epsilon$
there exists an $\{\F_t \}$-adapted process $\Xe \in L^2(\Omega;C([0,T];\L))$  such that 
\begin{align}\label{lim_nnew}
\lim_{n \rightarrow \infty} \E{\sup\limits_{t \in [0,T]}\nos{\Xe(t)-\Xe_n(t)}} \rightarrow 0.
\end{align}

In the next step, we show that the limiting process $\Xe$ is a SVI solution of \eqref{reg.TVF}.
We subtract the process
\begin{align*}
\d Z(t)= -G(t)\d t + Z(t)\d W(t)\,,
\end{align*} 
with $Z(t)=z_0$ from (\ref{vis.TVF}) and obtain
\begin{align*}
\d \left(\Xdn(t) -Z(t) \right)=  \left( -\Aed \Xdn(t)+G(t) \right)\d t + \left(\Xdn(t)-Z(t)\right)\d W(t).
\end{align*} 
The  It\^o formula implies
\begin{align}\label{ito_eps}
\frac{1}{2}&\E{\nos{\Xdn(t)-Z(t)}}
\nonumber \\
=&\frac{1}{2}\E{\nos{\Xdn(0)-z_0}}-\E{\intt \langle\Aed \Xdn(s),\Xdn(s)-Z(s) \rangle \d s} 
\\
&+\E{\intt \ska{G(s),\Xdn(s)-Z(s)}\d s}
+ \E{\intt \nos{\Xdn(s)-Z(s)} \d s}.
\nonumber 
\end{align}
We rewrite the second term on the right-hand side in above inequality as
\begin{align*}
&\E{\intt \langle\Aed \Xdn(s),\Xdn(s)-Z(s)\rangle \d s}\\
&=\E{\intt \delta(\nabla \Xdn(s),\nabla (\Xdn(s) - Z(s))) \d s}
\\&\quad +\E{\intt \left(\fe{\Xdn(s)},\nabla \big(\Xdn(s)-Z(s)\big)\right)\d s} 
\\ &\quad + \E{\intt \lambda(\Xdn(s)-{g^n},\Xdn(s)-Z(s)) \d s}.
\end{align*}
The convexity of $\mathcal{J}_\epsilon$ along with the Cauchy-Schwarz and Young's inequalities imply that
\begin{align*}
&\E{\intt (\fe{\Xdn(s)},\nabla (\Xdn(s)-Z(s)))\d s} 
\\ & +\E{\intt \lambda(\Xdn(s)-{g^n},\Xdn(s)-Z(s)) \d s}
\\& \geq \E{\intt \mathcal{J}_{\epsilon}(\Xdn(s))-\mathcal{J}_{\epsilon}(Z(s))\d s} 
\\ & +\E{\intt \frac{\lambda}{2}\nos{\Xdn(s)-{g^n}}-\frac{\lambda}{2}\nos{Z(s)-{g^n}} \d s} .
\end{align*}
By combining two inequalities above with (\ref{ito_eps}) we get
\begin{align}\label{est_svidelta1}
&\frac{1}{2}\E{\nos{\Xdn(t)-Z(t)}}+\E{\intt {\mathcal{J}_{\epsilon}(\Xdn(s))\d s}+\frac{\lambda}{2}\nos{\Xdn(s)-{g^n}} \d s}
\nonumber \\
&+ \frac{\delta}{2}\E{\intt \nos{\nabla  \Xdn(s)})\d s} 
\nonumber \\
& \leq \frac{1}{2}\E{\nos{\Xdn(0)-Z(0)}} + \E{\intt {\mathcal{J}_{\epsilon}(Z(s))\d s}+\frac{\lambda}{2}\nos{Z(s)-{g^n}}\d s} 
\\
&\quad + \frac{\delta}{2}\E{\intt \nos{\nabla  Z(s)})\d s}+\E{\intt \ska{G(s),\Xdn(s)-Z(s)}\d s}
\nonumber \\
&\quad +\E{\intt \nos{\Xdn(s)-Z(s)} \d s}. \nonumber
\end{align}
{Since $\Xed \in \Hz$ and $Z \in \Hz$ it holds that $\mathcal{J}_{\eps,\lambda}(\Xed)=\bar{\mathcal{J}}_{\eps,\lambda}(\Xed)$  and $\mathcal{J}_{\eps,\lambda}(Z)=\bar{\mathcal{J}}_{\eps,\lambda}(Z)$}. The lower-semicontinuity of {{$\bar{\mathcal{J}}_{\eps,\lambda}$ in $BV(\O)$}} with respect to convergence in $\mathbb{L}^1$, cf. \cite{Ambrosio_functions_of_BV},
and (\ref{delta_limit}), (\ref{lim_nnew}) {and the strong convergence  $g^n \rightarrow g $ in $\L$} 
imply that for $\delta  \rightarrow 0$ and $n \rightarrow \infty$
the limiting process $\Xe \in L^2(\Omega; C([0,T];\L)$ satisfies \eqref{reg.SVI}.

To conclude that $\Xe$ is a SVI solution of (\ref{reg.TVF}) it remains to show that $\Xe \in L^1(\Omega; L^1((0,T);BV(\O)))$.
Setting $G\equiv 0$ in \eqref{test} (which implies $Z\equiv 0$ by (\ref{test})) yields
\begin{align}\label{cont_ener}
\frac{1}{2}& \E{\nos{\Xe(t)}}+\E{\intt {\bar{\mathcal{J}}_{\eps,\lambda}}(\Xe(s)) \d s} \nonumber\\
&\leq  \frac{1}{2} \E{\nos{x_0}}+\E{\intt {\bar{\mathcal{J}}_{\eps,\lambda}}(0) \d s}  
+ \E{\intt \nos{\Xe(s)} \d s}\,.
\end{align}
On noting that (cf. Definition~\ref{Bounded Variation} or \cite[proof of Theorem 1.3]{Prohl_TVF_numerics})
\begin{align*}
{\bar{\mathcal{J}}_{\eps,\lambda}}(\Xe)
\geq& {\bar{\mathcal{J}}_{\lambda}}(\Xe),
\end{align*}
and 
${\bar{\mathcal{J}}_{\eps,\lambda}}(0)=\epsilon\be{\O} + \frac{\lambda}{2}\nos{g}$,
we deduce from (\ref{cont_ener}) that
\begin{align*}
\frac{1}{2}& \E{\nos{\Xe(t)}}+\E{\intt {\bar{\mathcal{J}}_{\lambda}}(\Xe(s)) \d s} \nonumber\\
&\leq  \frac{1}{2} \E{\nos{x_0}}+\E{\intt\epsilon\Big(\be{\O} + \frac{\lambda}{2}\nos{g}\Big)  \d s}  
+ \E{\intt \nos{\Xe(s)} \d s}\,.\nonumber
\end{align*}
Hence, by the Tonelli and Gronwall lemmas it follows that
\begin{align}\label{BV-eps-esti.}
\frac{1}{2} \E{\nos{\Xe(t)}}&+\E{\intt {\bar{\mathcal{J}}_{\lambda}}(\Xe(s)) \d s} \nonumber \\
&\leq C_T\exp(T)\left(\E{\nos{x_0}}+ \be{\O} + \lambda\nos{g}  \right).
\end{align}
Hence $\Xe \in L^2(\Omega; C([0,T];\L))\cap  L^1(\Omega; L^1((0,T);BV(\O)))$ is a SVI solution of (\ref{reg.TVF}) for $\epsilon \in (0,1]$.

In the next step we show the uniqueness of the SVI solution. 
Let $X^{\epsilon}_1, X^{\epsilon}_2$ be two SVI solutions to \eqref{reg.TVF} 
for a fixed  $\eps\in(0,1]$
with initial values 
$x_0\equiv x^1_0, x^2_0$ and $g \equiv g^1,g^2$, respectively. Let $\{x^{2,n}_0\}_{n\in\mathbb{N}} \subset L^2(\Omega,\F_0;\Hz)$ be a sequence,   
s.t. $x^{2,n}_0 \rightarrow x^2_0$ in $L^2(\Omega,\F_0;\L)$ and {$\{g^{2,n}\}_{n\in\mathbb{N}} \subset \Hz$ be a sequence,   
s.t. $g^{2,n}_0 \rightarrow g^2$ in $\L$
for $n \rightarrow \infty$ and  let $\{X^{\epsilon,\delta}_{2,n}\}_{n\in\mathbb{N},\delta>0}$  
be a sequence of variational solutions of \eqref{vis.TVF} (for fixed $\eps>0$)
with $x_0\equiv x^{2,n}_0$, $g\equiv g^{2,n}$}. 
We note that the first part of the proof implies that $X^{\epsilon,\delta}_{2,n} \rightarrow X^{\epsilon}_2$
in $L^2(\Omega; C([0,T];\L)$ for $\delta\rightarrow 0$, $n\rightarrow \infty$.
We set $Z=X^{\epsilon,\delta}_{2,n}, G=\Aed(X^{\epsilon,\delta}_{2,n})$ in (\ref{reg.SVI})
and observe that
\begin{align}\label{eps to 0 inequality}
\frac{1}{2} \E{\nos{X^{\epsilon}_1(t)-X^{\epsilon,\delta}_{2,n}(t)}}+&\E{\intt {\bar{\mathcal{J}}_{\eps,\lambda}}(X^{\epsilon}_1(s)) \d s} \nonumber
\\
&\leq  \frac{1}{2} \E{\nos{x^1-x^{2,n}_0}}+\E{\intt {\bar{\mathcal{J}}_{\eps,\lambda}}(X^{\epsilon,\delta}_{2,n}(s)) \d s} \nonumber 
\\
&-{\E{\delta \intt \ska{ X^{\epsilon}_1(s)- X^{\epsilon,\delta}_{2,n}(s), \Delta X^{\epsilon,\delta}_{2,n}(s))}} \d s}
\\
 &-{\E{\intt \ska{ X^{\epsilon}_1 (s)- X^{\epsilon,\delta}_{2,n}(s), \div\fe{X^{\epsilon,\delta}_{2,n}(s)} }\d s}} \nonumber
\\ \nonumber
 &+ \E{\intt \lambda\ska{X^{\epsilon}_1(s)- X^{\epsilon,\delta}_{2,n}(s),X^{\epsilon,\delta}_{2,n}(s)-{g^{2,n}}}  \d s} 
\\ \nonumber
 &+ \E{\intt \nos{X^{\epsilon}_1(s)-X^{\epsilon,\delta}_{2,n}(s)} \d s}
\\\nonumber
& := I+II+III+IV+V+VI.
\end{align}
{The term $III$ is estimated using Young's inequality as
\begin{align*}
III 
&\leq C\E{\intt \delta^{\frac{2}{3}}\nos{ X^{\epsilon}_1(s)-X^{\epsilon,\delta}_{2,n}(s)}+\delta^{\frac{4}{3}}  \nos{\Delta X^{\epsilon,\delta}_{2,n}(s))} \d s}\,.
\end{align*}
{We have to show the following estimate
\begin{align*}
IV=\ska{X_1^{\eps}-X^{\eps,\delta}_{2,n},-\div \fe{X^{\eps,\delta}_{2,n}}} \leq \bar{\mathcal{J}}_{\eps,0}(\Xe_1)-\bar{\mathcal{J}}_{\eps,0}(X^{\eps,\delta}_{2,n}).
\end{align*}
We consider an approximating sequence $x_k \in C^{\infty}(\O)\cap BV(\O)$, s.t., $x_k \rightarrow X_1^{\eps}$ strongly in $L^1(\O)$ and $\Je(x_k) \rightarrow \Je(X_1^{\eps})$ for $k \rightarrow \infty$, cf. \cite[Theorem 13.4.1]{book_attouch}.
Integration by parts then yields
\begin{align*}
&\ska{x_k-X^{\eps,\delta}_{2,n},-\div \fe{X^{\eps,\delta}_{2,n}}}
=\ska{\nabla(x_k-X^{\eps,\delta}_{2,n}), \fe{X^{\eps,\delta}_{2,n}}}
\\
 &+\int_{\partial \O} \gamma_0(x_k) \fe{X^{\eps,\delta}_{2,n}}\nu \d \mathcal{H}^{n-1}-\int_{\partial \O} \gamma_0(X^{\eps,\delta}_{2,n}) \fe{X^{\eps,\delta}_{2,n}}\nu \d \mathcal{H}^{n-1}\,,
\end{align*}
where $\nu $ is the outer normal vector at $\mathcal{H}^{n-1}$  almost all $x \in \partial \O$. Since  $X^{\eps,\delta}_{2,n}(\omega,t)\in\Hz$ for almost all $(\omega,t) \in \Omega \times [0,T]$, the second boundary integral vanishes.
We estimate the first boundary integral as
\begin{align*}
\int_{\partial \O} \gamma_0(x_k) \fe{X^{\eps,\delta}_{2,n}}\nu \d \mathcal{H}^{n-1}
\leq& \int_{\partial \O} \be{\gamma_0(x_k)} \be{\fe{X^{\eps,\delta}_{2,n}}\nu} \ d\mathcal{H}^{n-1}
\\
\leq&\int_{\partial \O} \be{\gamma_0(x_k)}\d \mathcal{H}^{n-1}.
\end{align*}
{On noting that $\no{X_1^{\eps}(\omega,t)} \leq C$  for a.a. $(\omega,t) \in \Omega \times (0,T)$, the convergence  $x_k \rightarrow X_1^{\eps} $ for $k \rightarrow \infty$ in $L^1(\O)$ implies $x_k \weak X_1^{\eps}$ in $\L$ 
a.e. in $\Omega \times (0,T)$.}
We further assert that the trace of each approximating function $x_k \in C^{\infty}(\O)\cap BV(\O)$, coincides with the trace of $X_1^{\eps}$ on the boundary of $\O$, see \cite[Remark 10.2.1]{book_attouch}. 
Hence we obtain by taking the limit for $k\rightarrow \infty$ that
\begin{align*}
\ska{\Xe_1-X^{\eps,\delta}_{2,n},-\div \fe{X^{\eps,\delta}_{2,n}}}&\leq \bar{\mathcal{J}}_{\eps,0}(\Xe_1)-\bar{\mathcal{J}}_{\eps,0}(X^{\eps,\delta}_{2,n}).
\end{align*} 
}
Next, we obtain
\begin{align*}
 V 
 & =  \lambda \E{\intt \ska{X^{\epsilon}_1(s)- X^{\epsilon,\delta}_{2,n}(s),X^{\epsilon,\delta}_{2,n}(s)-{g^{2,n}}} \d s}
\\
 &\leq  \frac{\lambda}{2}\E{\intt \nos{X^{\epsilon}_1(s)-{g^{2,n}}}-\nos{X^{\epsilon,\delta}_{2,n}(s)-{g^{2,n}}}\d s}.
\end{align*}
}
After substituting $III$-$V$ into \eqref{eps to 0 inequality} we arrive at
\begin{align}\label{Uniquness.eps to 0}
\frac{1}{2} \E{\nos{X^{\epsilon}_1(t)-X^{\epsilon,\delta}_{2,n}(t)}}&+\frac{\lambda}{2}\E{\intt \nos{X^{\epsilon}_1(s)-g^1}\d s} \nonumber
\\ \nonumber
&\leq  \frac{1}{2} \E{\nos{x^1_0-x^{2,n}_0}} +\frac{\lambda}{2}\E{\intt \nos{X^{\epsilon,\delta}_{2,n}(s)-g^1}\d s}
\\\nonumber
&+ C\E{\intt \delta^{\frac{2}{3}}\nos{ X^{\epsilon}_1(s)-X^{\epsilon,\delta}_{2,n}(s)}+\delta^{\frac{4}{3}}  \nos{\Delta X^{\epsilon,\delta}_{2,n}(s))} \d s}
\\
& \frac{\lambda}{2}\E{\intt \nos{X^{\epsilon}_1(s)-{g^{2,n}}}\d s} -  \frac{\lambda}{2}\E{\intt \nos{X^{\epsilon,\delta}_{2,n}(s)-{g^{2,n}}}\d s}
\\\nonumber
& +\E{\intt \nos{X^{\epsilon}_1(s)-X^{\epsilon,\delta}_{2,n}(s)} \d s}
.
\end{align}
The convergences (\ref{delta_limit}), (\ref{lim_nnew}) imply the convergence $X^{\epsilon,\delta}_{2,n} \rightarrow X^{\epsilon}_2$ in $L^2(\Omega;C([0,T];\L))$
for  $\delta \rightarrow 0$, $n \rightarrow \infty$. 
We note that for $\delta \rightarrow 0$ the fourth term on the right-hand side of (\ref{Uniquness.eps to 0}) vanishes due to Lemma \ref{laplace_energy_estimate}. 
Hence, by taking the limits for $\delta \rightarrow 0$, $n \rightarrow \infty$ in \eqref{Uniquness.eps to 0}, 
{using the strong convergence $g^{2,n} \rightarrow g^2 $ in $\L$ for $n \rightarrow \infty$} , the lower-semicontinuity of norms
and (\ref{delta_limit}), (\ref{lim_nnew})  we obtain 
\begin{align*}
&\E{\nos{X^{\epsilon}_1(t)-\Xe_2(t)}}\leq C \E{\nos{x^1_0-x^2_0}}
\\
&+\frac{\lambda}{2}\E{\intt\nos{X^{\epsilon}_1(s)-g^2}+\nos{\Xe_2(s)-g^1}-\nos{X^{\epsilon}_{1}(s)-g^1}-\nos{\Xe_2(s)-g^2} \d s}
\\
&+\E{\intt \nos{X^{\epsilon}_1(s)-{X^{\epsilon}_{2}(s)}} \d s}
\\
&\leq C\left(\E{\nos{x^1_0-x^2_0}}+\E{\intt \nos{X^{\epsilon}_1(s)-\Xe_2(s)} \d s} +\nos{g^1-g^2} \right )\, ,
\end{align*}
for all $t \in [0,T]$. 
After applying the Tonelli and Gronwall lemmas we obtain \eqref{reg_stability_inequality}.
\end{proof}
Our second main theorem establishes existence and uniqueness of a SVI solution to \eqref{TVF}
in the sense of Definition~\ref{def_varsoleps}.
The solution is obtained as a limit of solutions of the regularized gradient flow \eqref{reg.TVF}
for $\epsilon \rightarrow 0$.
\begin{thms}\label{Thm.SVI}
Let $0  < T < \infty$ and {$x_0 \in L^2(\Omega,\F_0;\L)$,   $ g \in \L$} be fixed. 
Let $\{\Xe\}_{\epsilon>0}$ be the {SVI} solutions of \eqref{reg.TVF} for $\epsilon\in(0,1]$.
Then  $\Xe$ converges to the unique SVI variational solution $X$ of (\ref{TVF})
in $L^2(\Omega;C([0,T];\L))$ for $\epsilon\rightarrow 0$, i.e., there holds
\begin{align}\label{epsilon goes to 0}
\lim\limits_{\epsilon \rightarrow 0} \E{\sup\limits_{t \in [0,T]}\nos{\Xe(t)-X(t)}}=0.
\end{align}
Furthermore, the following estimate holds
\begin{align}\label{stability_inequality}
&\E{\nos{X_1(t)-X_2(t)}}\leq C\left(\E{\nos{x^1_0-x^2_0}}+\nos{g^1-g^2} \right)\quad \mathrm{for\,\,all\,\,} t \in [0,T]\,,
\end{align}
where $X_1$ and $X_2$ are SVI solutions of (\ref{TVF}) with $x_0\equiv x^1_0$, $g\equiv g^1$  and $x_0\equiv x^2_0$, $g\equiv g^2$, respectively.
\end{thms}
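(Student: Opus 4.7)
The argument has three parts: (i) establish that the family $\{\Xe\}_{\eps\in(0,1]}$ is Cauchy in $L^2(\Omega;C([0,T];\L))$; (ii) identify the limit as an SVI solution of (\ref{TVF}); (iii) derive uniqueness together with the stability estimate. The strategy closely parallels the uniqueness argument in the proof of Theorem~\ref{Thm_reg.SVI}.

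\textbf{Step 1 (Cauchy property).} Fix $\eps_1,\eps_2\in(0,1]$. The SVI~(\ref{reg.SVI}) for $X^{\eps_1}$ admits only test processes $Z$ solving an SDE of the form (\ref{test}), and $X^{\eps_2}$ itself does not qualify, being only an SVI solution. I would therefore insert the viscous Galerkin-limit process $\Xdnt$ from \eqref{vis.TVF} (which satisfies $\d Z=-A^{\eps_2,\delta}(Z)\d t+Z\d W$) into the SVI for $\Xe_1$ with $G=A^{\eps_2,\delta}(\Xdnt)$, and then the symmetric counterpart. Expanding the $G$-term and handling the $\div\bigl(\nabla\Xdnt/\sqrt{|\nabla\Xdnt|^2+\eps_2^2}\bigr)$ contribution by the $C^\infty\cap BV$-approximation of $\Xe_1$ used inside the proof of Theorem~\ref{Thm_reg.SVI}, I obtain an upper bound of the form $\bar{\mathcal{J}}_{\eps_2,0}(\Xe_1)-\bar{\mathcal{J}}_{\eps_2,0}(\Xdnt)$ (with appropriate boundary-trace corrections). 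Adding the two symmetrized inequalities, the $\bar{\mathcal{J}}$-terms match up to cross contributions controlled by the elementary pointwise bound $|\sqrt{r^2+\eps_1^2}-\sqrt{r^2+\eps_2^2}|\leq|\eps_1-\eps_2|$, which yields an error of order $(\eps_1+\eps_2)|\O|$. The $\delta$-terms $\delta(\nabla\Xdnt,\nabla(\Xe_1-\Xdnt))$ vanish as $\delta\to 0$ thanks to Lemma~\ref{laplace_energy_estimate}, and the $\lambda$-terms combine to a negative sign plus lower order. Gronwall and the approximations $x_0^n\to x_0$, $g^n\to g$ then produce a bound
\begin{align*}
\E{\sup_{t\in[0,T]}\nos{X^{\eps_1}(t)-X^{\eps_2}(t)}}\leq C(\eps_1+\eps_2),
\end{align*}
which gives a limit $X\in L^2(\Omega;C([0,T];\L))$.

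\textbf{Step 2 (Identification).} Pass to the limit $\eps\to 0$ in (\ref{reg.SVI}). The strong convergence $\Xe\to X$ handles the $\L$-norm terms and the duality pairing with $G$. For a fixed admissible test process $Z\in L^2(\Omega\times(0,T);\Hz)$, dominated convergence and the bound $|\sqrt{|\nabla Z|^2+\eps^2}-|\nabla Z||\leq\eps$ give $\E{\intt\bar{\mathcal{J}}_{\eps,\lambda}(Z)\d s}\to\E{\intt\bar{\mathcal{J}}_\lambda(Z)\d s}$. On the other side, the pointwise inequality $\bar{\mathcal{J}}_{\eps,\lambda}\geq\bar{\mathcal{J}}_\lambda$ together with the $L^2$-lower semicontinuity of $\bar{\mathcal{J}}_\lambda$ gives $\liminf_\eps\E{\intt\bar{\mathcal{J}}_{\eps,\lambda}(\Xe)\d s}\geq\E{\intt\bar{\mathcal{J}}_\lambda(X)\d s}$, hence (\ref{SVIeps0}). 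The integrability $X\in L^1(\Omega;L^1((0,T);BV(\O)))$ follows from Fatou applied to the $\eps$-uniform bound (\ref{BV-eps-esti.}).

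\textbf{Step 3 (Stability and uniqueness).} Given two SVI solutions $X_1,X_2$ of (\ref{TVF}) with data $(x_0^i,g^i)$, let $\Xe_i$ denote the regularized SVI solutions from Theorem~\ref{Thm_reg.SVI}. By Step~1 applied to each data set, $\Xe_i\to X_i$ strongly in $L^2(\Omega;C([0,T];\L))$. The regularized stability bound (\ref{reg_stability_inequality}) is uniform in $\eps$, so passing to the limit yields (\ref{stability_inequality}); taking $x_0^1=x_0^2$ and $g^1=g^2$ gives uniqueness.

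\textbf{Main obstacle.} The central difficulty is Step~1: handling the cross term $\langle \Xe_1-\Xdnt,-\div\fe{\Xdnt}\rangle$ when $\Xe_1$ is only $L^1(0,T;BV(\O))$-regular. Direct integration by parts is forbidden, and one must use the density of $C^\infty\cap BV$ in $BV$ for the strict topology of $\bar{\mathcal{J}}$, together with the fact that the traces of the approximants agree with $\gamma_0(\Xe_1)$; this is exactly the mechanism forcing $\bar{\mathcal{J}}_{\eps,\lambda}$ (rather than $\mathcal{J}_{\eps,\lambda}$) to appear on the right-hand side, and it must be executed carefully so that the boundary contributions survive the symmetrization and do not spoil the Gronwall closure.
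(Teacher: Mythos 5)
Your Steps 2 and 3 coincide with the paper's argument (Fatou plus the $L^2$-lower semicontinuity of $\bar{\mathcal{J}}_\lambda$ against the uniform bound \eqref{BV-eps-esti.} for the identification, and passage to the limit in \eqref{reg_stability_inequality} for stability and uniqueness). Step 1, however, takes a genuinely different route from the paper, and as written it does not deliver the conclusion \eqref{epsilon goes to 0}. The SVI \eqref{reg.SVI} is an inequality in which the expectation has already been taken at each fixed $t$; testing it with $\Xdnt$ and $G=A^{\eps_2,\delta}\Xdnt$ and symmetrizing can therefore only produce a bound on $\sup_{t}\E{\nos{X^{\eps_1}(t)-X^{\eps_2}(t)}}$, not on $\E{\sup_{t}\nos{X^{\eps_1}(t)-X^{\eps_2}(t)}}$. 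The martingale term that must be controlled by Burkholder--Davis--Gundy to move the supremum inside the expectation has already been averaged away at the SVI level, so your Cauchy estimate lives in $C([0,T];L^2(\Omega;\L))$ rather than $L^2(\Omega;C([0,T];\L))$; in particular the $\P$-a.s. continuity of the sample paths of the limit $X$, which is part of the definition of an SVI solution, is left unestablished.

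The paper avoids this by working one level down: it applies It\^o's formula pathwise to $\nos{\Xdno(t)-\Xdnt(t)}$, where both processes are variational solutions of the viscous problem \eqref{vis.TVF} and hence $\Hz$-valued, uses the convexity \eqref{eps.convexity.inequality} together with the elementary estimate $\into\bigl(\sqrt{\bes{\nabla u}+\eps_1^2}-\sqrt{\bes{\nabla u}+\eps_2^2}\bigr)\d\xx\leq C(\eps_1+\eps_2)$, keeps the stochastic integral and applies Burkholder--Davis--Gundy with $p=1$ to obtain $\E{\sup_t\nos{\Xdno(t)-\Xdnt(t)}}\leq C(\eps_1+\eps_2)$, and only then sends $\delta\to 0$ and $n\to\infty$ using \eqref{delta_limit}, \eqref{lim_nnew} and lower semicontinuity of norms. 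This also sidesteps the entire $C^\infty\cap BV$ trace machinery you invoke for the cross term $\langle X^{\eps_1}-\Xdnt,-\div\fe{\Xdnt}\rangle$: that machinery is needed in the paper only for the uniqueness part of Theorem~\ref{Thm_reg.SVI}, where one genuinely must test the SVI of one solution against the other. If you wish to retain your SVI-based Step~1, you must either supplement it with a separate argument for path continuity of the limit, or replace it with the It\^o/BDG argument on the doubly regularized solutions.
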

 \begin{proof}[\textbf{Proof of Theorem \ref{Thm.SVI}}]\hfill \\
We consider $\L$-approximating sequences {$\{x_0^n\}_{n\in\mathbb{N}} \subset L^2(\Omega,\F_0;\Hz)$ 
and $\{g^n\}_{n\in\mathbb{N}} \subset \Hz$ of the initial condition $x_0\in L^2(\Omega,\F_0;\L)$ and $g \in \L$, respectively.}
For $n\in\mathbb{N}$, $\delta>0$ we denote by $\Xdno,\Xdnt$ the variational solutions of \eqref{vis.TVF}
with $\epsilon\equiv\epsilon_1$, $\epsilon\equiv\epsilon_2$, respectively. 
By It\^o's formula the difference satisfies
\begin{align}\label{eps.difference}
\frac{1}{2}& \nos{\Xdno(t)-\Xdnt(t)}
 \nonumber\\
=& -\delta \intt \nos{\nabla (\Xdno(s)-\Xdnt(s))}\d s
 \nonumber \\
&-\intt \ska{\frac{\nabla \Xdno(s)}{ \sqrt{ \vert \nabla \Xdno(s) \vert^2 +\epsilon_1^2}}-\frac{\nabla \Xdnt(s)}{ \sqrt{ \vert \nabla \Xdnt(s) \vert^2 +\epsilon_2^2}},\nabla (\Xdno(s)-\Xdnt(s))}\d s
  \\
&-\lambda\intt \nos{(\Xdno(s)-\Xdnt(s)}\d s +\intt \nos{\Xdno(s)-\Xdnt(s)}\d W(s)\nonumber
\\
 &+\intt \nos{\Xdno(s)-\Xdnt(s)} \nonumber\d s.  
\end{align}
We estimate the second term on the right-hand side of \eqref{eps.difference} using the convexity \eqref{eps.convexity.inequality} 
\begin{align}\label{est_convex}
 & \ska{\frac{\nabla \Xdno(s)}{ \sqrt{ \vert \nabla \Xdno(s) \vert^2 +\epsilon_1^2}}-\frac{\nabla \Xdnt(s)}{ \sqrt{ \vert \nabla \Xdnt(s) \vert^2 +\epsilon_2^2}},\nabla (\Xdno(s)-\Xdnt(s))}
\nonumber \\
 =&\ska{\frac{\nabla \Xdno(s)}{ \sqrt{ \vert \nabla \Xdno(s) \vert^2 +\epsilon_1^2}},\nabla (\Xdno(s)-\Xdnt(s))}
\nonumber \\
 &+\ska{\frac{\nabla \Xdnt(s)}{ \sqrt{ \vert \nabla \Xdnt(s) \vert^2 +\epsilon_2^2}},\nabla (\Xdnt(s)-\Xdno(s))}
\\
 \geq& \into \sqrt{\bes{\nabla \Xdno}+\epsilon_1^2}-\sqrt{\bes{\nabla \Xdnt}+\epsilon_1^2} \d \xx 
\nonumber\\
\nonumber
 &+\into \sqrt{\bes{\nabla \Xdnt}+\epsilon_2^2}-\sqrt{\bes{\nabla \Xdno}+\epsilon_2^2}   \d \xx.
 \end{align}
 Next, we observe that
 \begin{align*}
\into& \left(\sqrt{\bes{\nabla \Xdno}+\epsilon_1^2}-\sqrt{\bes{\nabla \Xdno}+\epsilon_2^2}\right)  \d \xx
\\
 &=\into \frac{\left(\sqrt{\bes{\nabla \Xdno}+\epsilon_1^2}-\sqrt{\bes{\nabla \Xdno}+\epsilon_2^2}\right)\left(\sqrt{\bes{\nabla \Xdno}+\epsilon_1^2}+\sqrt{\bes{\nabla \Xdno}+\epsilon_2^2}\right)}{\sqrt{\bes{\nabla \Xdno}+\epsilon_1^2}+\sqrt{\bes{\nabla \Xdno}+\epsilon_2^2}}\d \xx
\\
 &= \into  \frac{\bes{\nabla \Xdno}+\epsilon_1^2-\bes{\nabla \Xdno}-\epsilon_2^2}{\sqrt{\bes{\nabla \Xdno}+\epsilon_1^2}+\sqrt{\bes{\nabla \Xdno}+\epsilon_2^2}}\d \xx
\\
 &= \into \frac{(\epsilon_1+\epsilon_2)(\epsilon_1-\epsilon_2)}{\sqrt{\bes{\nabla \Xdno}+\epsilon_1^2}+\sqrt{\bes{\nabla \Xdno}+\epsilon_2^2}}\d \xx
\\
 &\leq \into \be{\epsilon_1 -\epsilon_2}\left( \frac{\epsilon_1}{\sqrt{\bes{\nabla \Xdno}+\epsilon_1^2}}+ \frac{\epsilon_2}{\sqrt{\bes{\nabla \Xdno}+\epsilon_2^2}} \right)\d \xx 
 \leq C(\epsilon_1+\epsilon_2).
 \end{align*}
 Using the inequality above, we get 
 \begin{align*}
 &\into \sqrt{\bes{\nabla \Xdno}+\epsilon_1^2}-\sqrt{\bes{\nabla \Xdnt}+\epsilon_2^2} \d \xx 
 +\into \sqrt{\bes{\nabla \Xdnt}+\epsilon_2^2}-\sqrt{\bes{\nabla \Xdno}+\epsilon_2^2}   \d \xx\\
 &\geq - \left|\into \sqrt{\bes{\nabla \Xdno}+\epsilon_1^2}-\sqrt{\bes{\nabla \Xdno}+\epsilon_2^2} \d \xx \right|\\
 &-\left|\into \sqrt{\bes{\nabla \Xdnt}+\epsilon_1^2}-\sqrt{\bes{\nabla \Xdnt}+\epsilon_2^2}   \d \xx\right| \\
 &\geq -C(\epsilon_1+\epsilon_2).
 \end{align*}
Substituting (\ref{est_convex}) along with the last inequality into \eqref{eps.difference} yields
\begin{align}\label{eps.estimate}
\frac{1}{2} \nos{\Xdno(t)-\Xdnt(t)} \leq& C(\epsilon_1+\epsilon_2) \nonumber\\
&+\intt \nos{\Xdno(s)-\Xdnt(s)}\d W(s)\\
 &+\intt \nos{\Xdno(s)-\Xdnt(s)}\nonumber\d s.  
\end{align}
After using the Burkholder-Davis-Gundy inequality for $p=1$, the Tonelli and Gronwall lemmas we obtain that
\begin{align}\label{Cauchy eps}
\E{\sup\limits_{t \in [0,T]}\nos{\Xdno(t)-\Xdnt(t)}}\leq C(\epsilon_1+\epsilon_2).
\end{align}
We take the limit for $\delta\rightarrow 0$ in \eqref{Cauchy eps}
for fixed $n$ and $\epsilon_1,\epsilon_2$, 
and obtain using \eqref{delta_limit} by the lower-semicontinuity of norms that
\begin{align}\label{delta to 0}
\E{\sup\limits_{t \in [0,T]} \nos{X^{\epsilon_1}_n(t)-X^{\epsilon_2}_n(t)}}
 \leq& \liminf \limits_{\delta \rightarrow 0 }
\E{\sup\limits_{t \in [0,T]} \nos{X^{\epsilon_1,\delta}_n(t)-X^{\epsilon_2,\delta}_n(t)}} \nonumber \\
\leq& C(\epsilon_1+\epsilon_2).
\end{align}
Hence, by \eqref{lim_nnew} and the lower-semicontinuity of norms, after taking the limit $n\rightarrow \infty$ in \eqref{delta to 0}
for fixed $\epsilon_1$, $\epsilon_2$ we get
\begin{align}\label{n to infty}
\E{\sup\limits_{t \in [0,T]} \nos{X^{\epsilon_1}(t)-X^{\epsilon_2}(t)}}
 \leq& \liminf \limits_{n \rightarrow \infty }
\E{\sup\limits_{t \in [0,T]} \nos{X^{\epsilon_1}_n(t)-X^{\epsilon_2}_n(t)}}
\\ \nonumber
\leq& C(\epsilon_1+\epsilon_2).
\end{align}
The above inequality implies that $\{X^{\epsilon}\}_{\epsilon>0}$ is a Cauchy Sequence in $\epsilon$.
 Consequently there exists a unique $\{\F_t\}$-adapted process $X \in   L^2(\Omega;C([0,T];\L))$ with $X(0)=x_0 $  such that
 \begin{align}\label{epsilon gegen 0}
\lim\limits_{\epsilon \rightarrow 0}\E{\sup\limits_{t \in [0,T]} \nos{X^{\epsilon}(t)-X(t)}}
 = 0.
\end{align}
This concludes the proof of \eqref{epsilon goes to 0}.

Next, we show that the limiting process $X$ is the SVI solution of (\ref{TVF}), i.e., we show that (\ref{SVIeps0}) holds.
We note that \eqref{BV-eps-esti.} implies that
\begin{align}\label{BVestimate}
\sup\limits_{\epsilon \in (0,1]} \E{\intt \bar{\mathcal{J}}_{\lambda}(\Xe(s)\d s } \leq C. 
\end{align}
Hence using \eqref{epsilon gegen 0}, \eqref{BVestimate} we get by 
Fatou's lemma and \cite[Proposition 11.3.2]{book_attouch} that
\begin{align*}
\liminf\limits_{\epsilon \rightarrow 0} \E{ \intt \bar{\mathcal{J}}_{\lambda}(\Xe(s)) \d s } \geq  \E { \intt \bar{\mathcal{J}}_{\lambda}(X(s)) \d s} .
\end{align*}
By Theorem~\ref{Thm_reg.SVI} we know that $X^{\epsilon}$ satisfies (\ref{reg.SVI}) for any $\epsilon \in (0,1]$.
By taking the limit for $\epsilon\rightarrow 0$ in (\ref{reg.SVI}),
using the above inequality and (\ref{epsilon gegen 0}) 
it follows that $X$ satisfies $\eqref{SVIeps0}$.
Finally, inequality $\eqref{stability_inequality}$ follows after taking the limit for $\epsilon \rightarrow 0$ in
(\ref{epsilon gegen 0}), by \eqref{reg_stability_inequality} and the lower semicontinuity of norms.
\end{proof}


\section{Numerical Approximation}\label{sec_num}
We construct a fully-discrete approximation of the STVF equation \eqref{TVF}
via an implicit time-discretization of the regularized STVF equation \eqref{reg.TVF}. 
For $N \in \N$ we consider the time-step $\tau := T/N$,
set $t_i:=i\tau$  for $i=0,\ldots,N$ and denote the discrete Wiener increments as $\Delta_i W:= W(t_i)-W(t_{i-1})$.
We combine the discretization in time with a the standard $\mathbb{H}^1_0$-conforming finite element method,
see, e.g., \cite{BrennerS02}, \cite{Prohl_TVF_numerics}, \cite{bm16}.
Given a family of quasi-uniform triangulations $\big\{\mathcal{T}_h\big\}_{h>0}$ of $\D$ into open simplices with mesh size $h=\max_{K\in \mathcal{T}_h}\{\mathrm{diam}(K)\}$
we consider the associated space of piecewise linear, globally continuous functions 
$\mathbb{V}_h = \{v_h \in C^0(\overline{\D});\, v_h|_K \in \mathcal{P}^1(K)\,\, \forall K\in \mathcal{T}_h\}\subset \mathbb{H}^1_0$
and set $L\equiv\text{dim}\mathbb{V}_h$ for the rest of the paper.
We set $X^h_0:=\mathcal{P}_h x_0$, $g^h:=\mathcal{P}_h g$, where $\mathcal{P}_h$ is the $\L$-projection onto $\mathbb{V}_h$.
%

The implicit fully-discrete approximation of (\ref{reg.TVF}) is defined as follows:
fix $N\in\N$, $h>0$ set $X^0_\varepsilon = x^h_0\in\mathbb{V}_h$ and determine $\Xi\in \mathbb{V}_h$, $i=1,\dots, N$ as the solution of
\begin{align}\label{num.reg.TVF}
\ska{\Xi,\vh}&=\ska{\Xmin,\vh}-\tau \ska{\fe{\Xi},\nabla\vh } \\
&-\tau\lambda\ska{\Xi -g^h,\vh}+\ska{\Xmin,\vh}\Delta_i W &&\forall \vh \in \mathbb{V}_h \nonumber.
\end{align}

To show convergence of the solution of the numerical scheme \eqref{num.reg.TVF}
we need to consider a discretization of the regularized problem (\ref{vis.TVF}).
Given $x_0\in L^2(\Omega,\F_0;\L)$, $g \in \L$
and $n \in \N$ we choose $x^n_0:=\mathcal{P}_n x_0\in\mathbb{V}_n$, $g^n:=\mathcal{P}_n g\in\mathbb{V}_n$ in (\ref{vis.TVF}).
Since $\mathbb{V}_n\subset \Hz$ the sequences $\{x^n_0\}_{n\in \mathbb{N}} \subset L^2(\Omega,\F_0;\Hz)$, $\{g_n\}_{n\in\N} \in \Hz$
constitute $\Hz$-approximating sequences of $x_0\in L^2(\Omega,\F_0;\L)$, $g \in \L$, respectively.
We set $x^{h,n}_0:=\mathcal{P}_h x_0^n$, $g^{h,n}:=\mathcal{P}_h g^n$, where $\mathcal{P}_h$ is the $\L$-projection onto $\mathbb{V}_h$. 
The fully-discrete Galerkin approximation of (\ref{vis.TVF}) for fixed $n \in \N $  is then defined as follows:  
fix $ N\in \N$, $h>0$ set $X_{\epsilon,\delta,n,h}^{0}=x^{h,n}_0$
and determine $\Yi\in \mathbb{V}_h$, $i=1,\dots, N$ as the solution of
\begin{align}\label{num.visc.TVF}
\ska{\Yi,\vh}&= \ska{\Ymin,\vh}-\tau \delta \ska{\nabla \Yi,\nabla \vh}-\tau \ska{\fe{\Yi},\nabla\vh }
\nonumber \\
&-\tau\lambda\ska{\Yi -g^{h,n},\vh}+\ska{\Ymin,\vh}\Delta_i W &&\forall \vh \in \mathbb{V}_h .
\end{align}


{
The next lemma, cf. \cite[Lemma II.1.4]{book_temam} is used to show $\mathbb{P}$-a.s. existence of discrete solutions $\{\Xi\}_{i=1}^N$, $\{\Yi\}_{i=1}^N$ of numerical schemes \eqref{num.reg.TVF}, \eqref{num.visc.TVF}, respectively. 
 \begin{lems}\label{lemma_Existence_for_numerical_scheme}
  Let $h: \R^L \rightarrow \R^L$ be continuous. If there is $R>0 $ such that $h(v)v\geq 0$ whenever $\no{v}_{\R^L}=R $ then there exist $\bar{v}$ satisfying $\no{\bar{v}}_{\R^L} \leq R$ and $h(\bar{v})=0$.
 \end{lems}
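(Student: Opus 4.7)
The plan is to argue by contradiction using Brouwer's fixed point theorem, which is the standard route for this type of topological existence result.

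First, I would suppose for contradiction that $h(v)\neq 0$ for every $v$ in the closed ball $\overline{B_R}:=\{v\in\mathbb{R}^L:\|v\|_{\mathbb{R}^L}\leq R\}$. Under this assumption the map
\begin{equation*}
\Phi: \overline{B_R}\to \overline{B_R},\qquad \Phi(v):=-R\,\frac{h(v)}{\|h(v)\|_{\mathbb{R}^L}},
\end{equation*}
is well defined and continuous (continuity of $h$ together with $h\neq 0$ on the compact set $\overline{B_R}$ ensures the denominator is continuous and bounded away from $0$). Since $\|\Phi(v)\|_{\mathbb{R}^L}=R$ for every $v$, the image lies inside $\overline{B_R}$.

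Next I would invoke Brouwer's fixed point theorem on the compact, convex set $\overline{B_R}$ to obtain $\bar v\in\overline{B_R}$ with $\Phi(\bar v)=\bar v$. From the definition of $\Phi$ this forces $\|\bar v\|_{\mathbb{R}^L}=R$, and then
\begin{equation*}
h(\bar v)\cdot \bar v \;=\; h(\bar v)\cdot\Phi(\bar v) \;=\; -R\,\|h(\bar v)\|_{\mathbb{R}^L} \;<\;0,
\end{equation*}
which contradicts the hypothesis that $h(v)\cdot v\geq 0$ whenever $\|v\|_{\mathbb{R}^L}=R$. Hence there must exist $\bar v\in\overline{B_R}$ with $h(\bar v)=0$, which is precisely the conclusion.

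The only conceptual ingredient is Brouwer's fixed point theorem; the rest is a short verification. The main (but still routine) point to be careful about is the continuity and well-posedness of $\Phi$, which depends crucially on the contradiction hypothesis $h\neq 0$ on $\overline{B_R}$ so that division by $\|h(v)\|_{\mathbb{R}^L}$ is legitimate. No further technical obstacle is expected, since the statement is a classical lemma (see \cite[Lemma II.1.4]{book_temam}) and no analytic fine structure beyond continuity of $h$ is needed.
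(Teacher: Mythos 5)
Your argument is correct and is essentially the standard proof of this lemma (the paper itself gives no proof, simply citing \cite[Lemma II.1.4]{book_temam}, where exactly this Brouwer-based contradiction argument is used). One cosmetic remark: in the application the norm $\no{\cdot}_{\R^L}$ is not the Euclidean norm (it is induced by $\no{\cdot}_{\Hz}$), so the identity $h(\bar v)\cdot\Phi(\bar v)=-R\,\no{h(\bar v)}_{\R^L}$ should read $-R\,|h(\bar v)|^2/\no{h(\bar v)}_{\R^L}$ with $|\cdot|$ the Euclidean norm; the quantity is still strictly negative, the closed ball is still compact and convex, and the contradiction goes through unchanged.
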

 In order to show $\{\mathcal{F}_{t_i}\}_{i=1}^{N}$-measurability of the random variables $\{\Xi\}_{i=1}^N$, $\{\Yi\}_{i=1}^N$
we make use of the following lemma, cf. \cite{gm_05,em_sis_18}.
\begin{lems}\label{lemma_measurability_for_numerical_scheme}
 Let $(S,\Sigma)$ be a measure space. Let $f :S\times \R^L\rightarrow \R^L$ be a function that is $\Sigma$-measurable in its first argument for every $x \in \R^L$, that is continuous in its second argument for every $\alpha \in S$ and moreover such that for 
 every $\alpha \in S$ the equation $f(\alpha, x)=0$ has an unique solution $x=g(\alpha)$. Then $g : S \rightarrow \R^L$ is $\Sigma$-measurable.
\end{lems}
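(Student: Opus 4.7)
The plan is to establish measurability of $g$ by showing that, for every open set $U\subset\R^L$, the preimage $g^{-1}(U)$ can be written as a countable Boolean combination of sets built from the Carath\'eodory structure of $f$, so that one only needs to invoke the measurability of $\alpha\mapsto|f(\alpha,x)|$ for fixed $x\in\R^L$.

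First I would fix an open set $U\subset\R^L$ and exhaust its complement by compact sets as $\R^L\setminus U=\bigcup_{n\in\N}V_n$, with for instance $V_n:=(\R^L\setminus U)\cap\overline{B_n(0)}$. Since by hypothesis $g(\alpha)$ is the \emph{unique} zero of $f(\alpha,\cdot)$, the condition $g(\alpha)\in U$ is equivalent to the absence of zeros of $f(\alpha,\cdot)$ in $\R^L\setminus U$, and hence to the absence of zeros in each $V_n$. Continuity of $f(\alpha,\cdot)$ together with compactness of $V_n$ ensures that $|f(\alpha,\cdot)|$ attains its minimum on $V_n$, so ``no zero in $V_n$'' is in turn equivalent to $\inf_{x\in V_n}|f(\alpha,x)|>0$.

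In a second step I would reduce the infimum over the uncountable set $V_n$ to a countable one. Choosing a countable dense subset $D_n\subset V_n$ (which exists because $V_n\subset\R^L$ is separable) and using continuity of $f(\alpha,\cdot)$ yields $\inf_{x\in V_n}|f(\alpha,x)|=\inf_{x\in D_n}|f(\alpha,x)|$. Combined with the previous reformulation this produces the representation
\begin{align*}
g^{-1}(U)=\bigcap_{n\in\N}\bigcup_{k\in\N}\bigcap_{x\in D_n}\big\{\alpha\in S: |f(\alpha,x)|\geq 1/k\big\}\,.
\end{align*}
Since, by assumption, each set in the innermost intersection is $\Sigma$-measurable, the whole right-hand side belongs to $\Sigma$; this gives $g^{-1}(U)\in\Sigma$ for every open $U$ and therefore the $\Sigma$-measurability of $g$.

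The only non-routine point is the equivalence used in the first step: it is exactly there that compactness of $V_n$, continuity of $f(\alpha,\cdot)$ and uniqueness of the zero combine to turn the qualitative statement ``no zero on $V_n$'' into a quantitative strictly positive lower bound on $|f(\alpha,\cdot)|$. Once this equivalence is secured the remaining argument is standard measure-theoretic bookkeeping, and, importantly, no measurable-selection theorem is needed, the uniqueness assumption rendering $g$ a genuine function rather than a multifunction.
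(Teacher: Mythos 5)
Your proof is correct and complete: the reduction of ``$g(\alpha)\in U$'' to the countable family of measurable conditions $|f(\alpha,x)|\geq 1/k$, $x$ ranging over countable dense subsets of a compact exhaustion of $\R^L\setminus U$, is exactly where uniqueness of the zero, continuity in $x$ and measurability in $\alpha$ are each used, and the resulting set identity for $g^{-1}(U)$ is valid (including the degenerate case $V_n=\emptyset$, where the inner intersection is all of $S$). The paper itself does not prove this lemma but quotes it from the cited references, and the argument given there is essentially the same as yours, so there is nothing to add.
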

}

Below we show the existence, uniqueness and measurability of numerical solutions of (\ref{num.reg.TVF}), (\ref{num.visc.TVF}).
We state the result for the scheme (\ref{num.visc.TVF}) only,
since the proof also holds for $\delta=0$ (i.e. for (\ref{num.reg.TVF})) without any modifications.
\begin{lems}\label{Lemma_Existence_measurability_num.Scheme}
 Let $x_0 \in L^2(\Omega,\F_0;\L)$, $g\in \L$
and let $L,n,N \in \N$ be fixed.
The for any $\delta\geq 0$, $\epsilon > 0$, $i=1,\ldots,N,$ there exist $\mathcal{F}_{t_i}$-measurable $\mathbb{P}$-a.s. unique 
random variables $\Yi\in\mathbb{V}_h$ which solves (\ref{num.visc.TVF}).
\end{lems}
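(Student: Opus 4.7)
The argument proceeds by induction on $i$, reducing the problem at each step to a nonlinear algebraic equation on $\R^L$ for the coefficients of $\Yi$ in a fixed basis $\{\phi_1,\dots,\phi_L\}$ of $\mathbb{V}_h$. Throughout the plan, fix $\omega\in\Omega$ and assume that $\Ymin$ has already been constructed as an $\mathcal{F}_{t_{i-1}}$-measurable $\mathbb{V}_h$-valued random variable (the base case $i=1$ uses the deterministic $x^{h,n}_0$). For $u=\sum_{j=1}^L \alpha_j\phi_j \in \mathbb{V}_h$ define $h:\Omega\times\R^L\to\R^L$ componentwise by
\begin{align*}
h_k(\omega,\alpha) := (1+\tau\lambda)\ska{u,\phi_k} + \tau\delta\ska{\nabla u,\nabla \phi_k} + \tau\ska{\fe{u},\nabla \phi_k} - \ska{(1+\Delta_i W(\omega))\Ymin + \tau\lambda g^{h,n},\phi_k},
\end{align*}
so that $h(\omega,\alpha)=0$ is equivalent to $\Yi=\sum_j\alpha_j\phi_j$ solving \eqref{num.visc.TVF}.

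For existence I would verify the coercivity hypothesis of Lemma \ref{lemma_Existence_for_numerical_scheme}. Taking the Euclidean inner product with $\alpha$ reassembles the test function as $u$ itself, yielding
\begin{align*}
h(\omega,\alpha)\cdot\alpha = (1+\tau\lambda)\nos{u} + \tau\delta\nos{\nabla u} + \tau\into \frac{|\nabla u|^2}{\sqrt{|\nabla u|^2+\eps^2}}\d\xx - \ska{(1+\Delta_i W)\Ymin + \tau\lambda g^{h,n},u}.
\end{align*}
The third summand is non-negative, and the linear forcing is bounded using Cauchy--Schwarz and Young's inequality by $\tfrac12\nos{u} + C(\omega,\tau,\lambda)(\nos{\Ymin}+\nos{g^{h,n}})$, where $C(\omega,\dots)$ depends on $|1+\Delta_i W(\omega)|^2$. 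Since $\mathbb{V}_h$ is finite-dimensional, $\nos{u}$ is equivalent to $\|\alpha\|^2_{\R^L}$, and we obtain $h(\omega,\alpha)\cdot\alpha\geq \tfrac12\nos{u}-C(\omega)\geq 0$ for all $\alpha$ with $\|\alpha\|_{\R^L}=R(\omega)$ sufficiently large. Continuity of $h$ in $\alpha$ is immediate from smoothness of $\xi\mapsto\xi/\sqrt{|\xi|^2+\eps^2}$, so Lemma \ref{lemma_Existence_for_numerical_scheme} furnishes a solution.

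For uniqueness, if $u_1,u_2\in\mathbb{V}_h$ both solve \eqref{num.visc.TVF}, I subtract the two equations and test with $v_h=u_1-u_2$, producing
\begin{align*}
(1+\tau\lambda)\nos{u_1-u_2} + \tau\delta\nos{\nabla(u_1-u_2)} + \tau\ska{\fe{u_1}-\fe{u_2},\nabla(u_1-u_2)} = 0.
\end{align*}
The last term is non-negative by the convexity inequality \eqref{eps.convexity.inequality}, and since $1+\tau\lambda\geq 1$, we conclude $u_1=u_2$. Note that the proof does not use $\delta>0$, so it covers \eqref{num.reg.TVF} as well.

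Finally, for $\mathcal{F}_{t_i}$-measurability I invoke Lemma \ref{lemma_measurability_for_numerical_scheme} with $(S,\Sigma)=(\Omega,\mathcal{F}_{t_i})$. By the inductive hypothesis $\Ymin$ is $\mathcal{F}_{t_{i-1}}\subset\mathcal{F}_{t_i}$-measurable and $\Delta_i W$ is $\mathcal{F}_{t_i}$-measurable, so $\omega\mapsto h(\omega,\alpha)$ is $\mathcal{F}_{t_i}$-measurable for each $\alpha\in\R^L$; continuity in $\alpha$ has already been observed; and uniqueness was just established. The lemma then yields that the coefficient map $\omega\mapsto\alpha(\omega)$ is $\mathcal{F}_{t_i}$-measurable, hence so is $\Yi=\sum_j\alpha_j(\omega)\phi_j$.

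The only step that required genuine thought is coercivity: the singular nonlinearity could a priori produce an uncontrollable contribution, but the identity $\fe{u}\cdot\nabla u=|\nabla u|^2/\sqrt{|\nabla u|^2+\eps^2}\geq 0$ removes the obstacle, and the deterministic mass term $\nos{u}$ alone suffices to dominate the stochastic forcing pathwise for each fixed $\omega$. All remaining ingredients are routine applications of the cited lemmas.
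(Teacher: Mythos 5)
Your proposal is correct and follows essentially the same route as the paper: the same componentwise map $h(\omega,\cdot)$ on $\R^L$, coercivity of $h(\omega,\alpha)\cdot\alpha$ via non-negativity of the regularized TV term to invoke Lemma~\ref{lemma_Existence_for_numerical_scheme}, uniqueness from the monotonicity \eqref{eps.convexity.inequality}, and measurability from Lemma~\ref{lemma_measurability_for_numerical_scheme}. The only cosmetic differences are that you absorb the stochastic forcing by Young's inequality rather than factoring out $\no{X_\omega}$, and you justify continuity directly from smoothness of $\xi\mapsto\xi/\sqrt{|\xi|^2+\eps^2}$ instead of citing demicontinuity of $\Aed$; both are fine.
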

\begin{proof}[\textbf{Proof of Lemma \ref{Lemma_Existence_measurability_num.Scheme}}]
Assume that the $\mathbb{V}_h$-valued random variables $X_{\eps,\delta,n,h}^{0},\ldots,X_{\eps,\delta,n,h}^{i-1}$ satisfy \eqref{num.visc.TVF}
 and that $X_{\eps,\delta,n,h}^{k}$ is $\F_{t_k}$-measurable for $k=1,\ldots,i-1$. 
We show that there is a  $\F_{t_i}$ measurable random variable $\Yi$, that satisfies \eqref{num.reg.TVF}. 
Let $\{\phi_\ell\}_{\ell=1}^L$ be the basis of $\mathbb{V}_h$.
We identify every $v \in \mathbb{V}_h$ with a vector $\bar{v} \in \R^L$ with $v = \sum_{\ell=1}^L \bar{v}_\ell \phi _\ell$ 
and define a norm on $\R^L$ as $\no{\bar{v}}_{\R^L}:= \no{v}_{\Hz}$. 
For an arbitrary $\omega \in \Omega$ we represent $X_\omega \in \mathbb{V}_h$ as a vector $\bar{X}_\omega \in \R^L$
and define a function $h: \Omega \times \R^L \rightarrow \R^L$ component-wise for $\ell=1,\ldots,L$ as
\begin{align*}
  h(\omega,\bar{X}_\omega)_\ell:=(X_\omega-\Ymin(\omega),\phi_\ell)+\tau\delta	(\nabla X_\omega, \nabla \phi_\ell)+\tau(\fe{X_\omega},\nabla \phi_\ell)\\
+\tau\lambda(X_\omega- g^{h,n},\phi_\ell) -(\Ymin(\omega),\phi_\ell)\Delta_i W(\omega).
 \end{align*}
We show, that for each $\omega \in \Omega$ there exists an $\bar{X}_\omega$ such that $h(\omega,\bar{X}_\omega)=0$. 
We note the following inequality
\begin{align*}
  h(\omega,\bar{X}_\omega)\cdot \bar{X}_\omega=&(X_\omega-\Ymin(\omega),X)+\tau\delta\nos{\Delta X_\omega}+\tau(\fe{X_\omega},\nabla X_\omega)
\\
&+\tau\lambda(X_\omega- g^{h,n},X_\omega) -(\Ymin(\omega),X_\omega)\Delta_i W(\omega)
\\
 \geq & \nos{X_\omega} -(\Ymin(\omega),X_\omega)+\tau(\fe{X_\omega},\nabla X_\omega)\\\
  &-(\Ymin(\omega),X_\omega)\Delta_i W+\tau\lambda \no{X_\omega}  -\tau\lambda ( g^n,X_\omega)\\
  \geq & \no{ X_\omega}\left(  \no{ X_\omega} -\no{ \Ymin(\omega)} -\no{\Ymin(\omega)}|\Delta_i W(\omega)|- \no{ g^{h,n}}     \right)\,.
 \end{align*}
On choosing $\no{X_\omega}=R_\omega$ large enough, 
the existence of $X_{\eps,\delta,n}^{i}(\omega)\in \mathbb{V}_h$ for each $\omega\in\Omega$
then follows by Lemma \ref{lemma_Existence_for_numerical_scheme}, since $h(\omega,\cdot)$ is continuous 
by the demicontinuity of the operator $\Aed$, which follows from  hemicontinuity and and monotonicty of $\Aed$ for $\delta\geq 0$, $\epsilon >0$, see \cite[Remark 4.1.1]{Roeckner_book}.
The $\mathcal{F}_{t_i}$-measurabilty follows by Lemma~\ref{lemma_measurability_for_numerical_scheme} for unique $X_{\eps,\delta,n}^{i}$. 

Hence, it remains to show that $X_{\eps,\delta,n,h}^{i}$ is $\mathbb{P}$-a.s. unique. 
Assume there are two different solution $X_{1},X_{2}$, s.t. $h(\omega,\overline{X}_{1}(\omega))=0=h(\omega,\overline{X}_{2}(\omega))$ for $\omega\in\Omega$.
Then by the convexity (\ref{eps.convexity.inequality}) we observe that
 \begin{align*}
  0=&\big(h(\omega,\overline{X}_1(\omega))-h(\omega,\overline{X}_2(\omega))\big)\cdot\big(\overline{X}_1(\omega)-\overline{X}_2(\omega)\big)\\
  =&(1+\tau \lambda)\nos{ X_1(\omega) - X_2(\omega)}+\tau \delta \nos{\nabla (X_1-X_2)(\omega)}\\
  &+\tau\left(\fe{X_1(\omega)} -\fe{X_2(\omega)},\nabla X_1(\omega) - \nabla X_2(\omega)\right)\\
  \geq& (1+\tau \lambda )\nos{X_1(\omega)-X_2(\omega)}+\tau \delta \nos{\nabla(X_1-X_2)(\omega)}\,.
 \end{align*}
Hence ${X}_1\equiv {X}_2$ $\mathbb{P}$-a.s.
\end{proof}
We define the discrete Laplacian $\Delta_h : \mathbb{V}_h \rightarrow \mathbb{V}_h$ by
\begin{align}\label{disc_Lap}
-\ska{\Delta_h w_h,v_h}=\ska{\nabla w_h,\nabla v_h} ~~\forall w_h,v_h \in \mathbb{V}_h. 
\end{align}
To obtain the required the stability properties of the numerical approximation \eqref{num.visc.TVF} we need the following lemma.
\begin{lems}\label{Lemma_Discret_Laplace}
Let $\Delta_h$ be the discrete Laplacian defined by \eqref{disc_Lap}. Then for any $v_h \in \mathbb{V}_h$, $\varepsilon,h>0$ the following inequality holds:
\begin{align}
-\ska{\fe{v_h},\nabla (\Delta_h \vh)} \geq 0\,.
\end{align}
\end{lems}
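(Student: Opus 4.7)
The plan is to transfer to $\mathbb{V}_h$ the Yosida-approximation argument by which the continuous inequality $\ska{-\Delta X, \div\fe{X}} \leq 0$ is obtained in the proof of Lemma~\ref{laplace_energy_estimate}. That argument combines the convexity (subdifferential) inequality \eqref{eps.convexity.inequality} for $\Jeps$ with the resolvent estimate of Proposition~\ref{Resolvent_estimate}, and I aim for a direct discrete counterpart of this structure, now for the discrete Laplacian $\Delta_h$ on the finite-dimensional space $\mathbb{V}_h$.

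First I would introduce on $\mathbb{V}_h$ the resolvent and Yosida approximation of the positive operator $-\Delta_h$: for $\xi>0$, let $R^h_\xi v_h \in \mathbb{V}_h$ be the unique solution of
\begin{align*}
\ska{R^h_\xi v_h, \phi_h} + \xi \ska{\nabla R^h_\xi v_h, \nabla \phi_h} = \ska{v_h, \phi_h} \qquad \forall \phi_h \in \mathbb{V}_h,
\end{align*}
which exists by coercivity (Lax--Milgram), and set $T^h_\xi := \Delta_h R^h_\xi = \xi^{-1}(R^h_\xi - I)$; in the finite-dimensional setting a Taylor expansion yields $T^h_\xi v_h \to \Delta_h v_h$ in $\L$ as $\xi \to 0^+$. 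Using the subdifferential form of the convexity inequality \eqref{eps.convexity.inequality} for $\Jeps$ applied to $v_h$ and $R^h_\xi v_h$, both of which lie in $\mathbb{V}_h \subset \Hz$, together with the identity $R^h_\xi v_h - v_h = \xi T^h_\xi v_h$, one obtains
\begin{align*}
\ska{\fe{v_h}, \nabla T^h_\xi v_h} \;\leq\; \xi^{-1}\bigl(\Jeps(R^h_\xi v_h) - \Jeps(v_h)\bigr).
\end{align*}

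The main obstacle, and what makes the lemma nontrivial, is to prove the discrete counterpart of Proposition~\ref{Resolvent_estimate}, namely $\Jeps(R^h_\xi v_h) \leq \Jeps(v_h)$: this renders the right-hand side above non-positive and, after sending $\xi\to 0^+$, yields the claim $-\ska{\fe{v_h}, \nabla \Delta_h v_h} \geq 0$. Since the Galerkin resolvent is not obviously $\Jeps$-decreasing (the pointwise gradient ordering used in the continuous proof via the maximum-principle-type argument on convex domains has no immediate analog at the discrete level), I would try to establish the key inequality from the variational characterization of $R^h_\xi v_h$ as the $\mathbb{V}_h$-minimizer of $\tfrac12 \nos{u_h - v_h} + \tfrac\xi2\nos{\nabla u_h}$, combined with the element-wise piecewise-constant structure of $\nabla u_h$ and $\fe{u_h}$ on $\mathbb{P}^1$ finite element spaces, and using any available mesh-regularity condition on $\mathcal{T}_h$ (e.g.\ an acute-type condition guaranteeing a discrete maximum principle for $-\Delta_h$).
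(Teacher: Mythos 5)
Your reduction to a discrete Yosida approximation is set up correctly as far as it goes: the resolvent $R^h_\xi$ is well defined by Lax--Milgram, the identity $\ska{\fe{v_h},\nabla T^h_\xi v_h}\le \xi^{-1}\bigl(\Jeps(R^h_\xi v_h)-\Jeps(v_h)\bigr)$ follows from \eqref{eps.convexity.inequality}, and $T^h_\xi v_h\to\Delta_h v_h$ in the finite-dimensional space. But the proof then rests entirely on the discrete resolvent estimate $\Jeps(R^h_\xi v_h)\le \Jeps(v_h)$, which you acknowledge you cannot establish and only propose to ``try'' to prove. This is not a routine verification to be deferred: it is the whole content of the lemma under your approach, and it is genuinely doubtful on the meshes the lemma allows. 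The Galerkin resolvent does contract $\no{\nabla\cdot}_{\L}$ (take $u_h=v_h$ as a competitor in your variational characterization), but $\Jeps$ is essentially the $L^1$-norm of the gradient, and total-variation--diminishing or maximum-principle properties of $-\Delta_h$ on $\mathbb{P}^1$ elements are known to require acuteness/Delaunay-type angle conditions. The lemma is stated for an arbitrary quasi-uniform triangulation with no such hypothesis, so even a successful completion along your lines would prove a weaker statement than the one claimed. The continuous Proposition~\ref{Resolvent_estimate} relies on convexity of the domain and PDE-level gradient estimates for the resolvent, for which there is no discrete surrogate on general meshes --- exactly the obstacle you flag but do not overcome.

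The paper avoids resolvents altogether and argues by direct linear algebra: since $\bigl(\bes{\nabla\vh}+\eps^2\bigr)^{-1/2}$ is constant on each simplex $K$, one writes $-\ska{\fe{\vh},\nabla(\Delta_h\vh)}$ as a double sum over element pairs $K,K'$ of terms $\bar v_h^TA_K^TM^{-1}A_{K'}\bar v_h$ weighted by these elementwise constants, symmetrizes in $K,K'$, and uses Young's inequality together with the positive definiteness of $M^{-1}$ to conclude nonnegativity. That argument uses only the piecewise-linear structure of $\mathbb{V}_h$ and needs no mesh-angle condition, which is why the lemma can be stated in the generality it has. As written, your proposal has a gap at its central step and does not yet constitute a proof.
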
 
\begin{proof}[\textbf{Proof of Lemma \ref{Lemma_Discret_Laplace}}]
Let $\{\phi_\ell\}_{\ell=1}^L$ be the basis of $\mathbb{V}_h$ consisting of continuous piecewise linear Lagrange basis functions
associated with the nodes of the triangulation $\mathcal{T}_h$. Then any $\vh \in \mathbb{V}_h$ has the 
representation $\vh=\sum_{\ell=1}^L (\vh)_{\ell} \phi_{\ell}$, where $(\vh)_{\ell} \in \R, \ell=1,\ldots,L $
and analogically  $\Delta_h\vh=\sum_{\ell=1}^L (\Delta_h \vh)_\ell \phi_\ell$, with coefficients $(\Delta \vh)_\ell \in \R, \ell=1,\ldots,L$. 
From (\ref{disc_Lap}) it follows that
\begin{align}\label{coeff}
\sum_{\ell=1}^L (\Delta \vh)_\ell \ska{\phi_\ell,\phi_k}
=-\sum_{\ell=1}^L (\vh)_{\ell} \ska{\nabla \phi_{\ell},\nabla \phi_k}=-\sum_{K\in \mathcal{T}_h}\sum_{\ell=1}^L (\vh)_{\ell} \ska{\nabla \phi_{\ell},\nabla \phi_k}_{K}\,,
\end{align}
where we denote $(v,w)_K := \int_{K}v(x)w(x)\mathrm{d}x$ for $K\in \mathcal{T}_h$.

We rewrite \eqref{coeff} with the mass matrix $M:=\{M\}_{i,k}:=(\phi_i,\phi_k)$ and the stiffness matrix $A:=\{A\}_{i,k}:=(\nabla \phi_i,\nabla \phi_k)$  and 
$A_K := \{A_K\}_{i,k}:=(\nabla \phi_i,\nabla \phi_k)_{K}$ as
\begin{align}\label{coeff_Matrix}
\Delta_h \bar{v}_h=M^{-1}A  \bar{v}_h=M^{-1}\sum_{K \in \mathcal{T}_h} A_K  \bar{v}_h\, ,
\end{align} 
where $\Delta_h\bar{v}_h\in \R^L$ is the vector $((\Delta_h v_h)_1,\ldots,(\Delta_h v_h)_L)^T$ and 
$\bar{v}_h\in \R^L$ is the vector $((v_h)_1,\ldots,(v_h)_L)^T$. 
Since $\mathbb{V}_h$ consists of functions, which are piecewise linear on the triangles $K \in \mathcal{T}_h$, $(\bes{\nabla \vh}+\epsilon^2)^{-\frac{1}{2}}$ is constant on every triangle $T$. 
We note, that the matrices $M$ and $M^{-1}$ are positive definite.
We get using the Young's inequality 
\begin{align*}
-&\ska{\fe{\vh},\nabla (\Delta_h \vh)}
=-\sum_{K\in \mathcal{T}_h}(\bes{\nabla \vh}+\eps^2)^{-\frac{1}{2}}_K\ska{\nabla \vh,\nabla (\Delta_h \nabla\vh)}_K
\\
=&-\sum_{K\in \mathcal{T}_h}\sum_{k,\ell=1}^L(\bes{\nabla \vh}+\eps^2)^{-\frac{1}{2}}_K(v_h)_{\ell}(\Delta_h v_h)_k\ska{\nabla\phi_{\ell},  \nabla \phi_k }_K
\\
=&-\sum_{K\in \mathcal{T}_h}(\bes{\nabla \vh}+\eps^2)^{-\frac{1}{2}}_K \bar{v}_h^T A^T_K\Delta_h\bar{v}_h
\\
=&\sum_{K,K' \in \mathcal{T}_h}(\bes{\nabla \vh}+\eps^2)^{-\frac{1}{2}}_K \bar{v}_h^TA^T_K M^{-1} A_{K'}\bar{v}_h
\\
=&\frac{1}{2}\sum_{K,K'\in \mathcal{T}_h}(\bes{\nabla \vh}+\eps^2)^{-\frac{1}{2}}_K \bar{v}_h^TA^T_K M^{-1}A_{K'}\bar{v}_h\\
&+\frac{1}{2}\sum_{K,K'\in \mathcal{T}_h}(\bes{\nabla \vh}+\eps^2)^{-\frac{1}{2}}_{K'} \bar{v}_h^TA^T_K M^{-1}A_{K'}\bar{v}_h
\\
=&\frac{1}{2}\sum_{K,K'\in \mathcal{T}_h} \bar{v}_h^T A^T_K M^{-1}A_{K'}\bar{v}_h\left((\bes{\nabla \vh}+\eps^2)^{-\frac{1}{2}}_K+(\bes{\nabla \vh}+\eps^2)^{-\frac{1}{2}}_{K'} \right)\\
\\
\geq&\frac{1}{2}\sum_{K,K'\in \mathcal{T}_h} \sqrt{(\bes{\nabla \vh}+\eps^2)^{-\frac{1}{2}}_K}\bar{v}_h^T A^T_K M^{-1}A_{K'}\bar{v}_h\sqrt{(\bes{\nabla \vh}+\eps^2)^{-\frac{1}{2}}_{K'}} \\
\geq& 0\, ,
\end{align*}
since $M^{-1}$ is positive definite.
\end{proof}

In the next lemma we state the stability properties of the numerical solution of the scheme (\ref{num.visc.TVF})
which are discrete analogues of estimates in Lemma~\ref{lem_visc_exist}~and Lemma~\ref{laplace_energy_estimate}.
\begin{lems}\label{Lemma_Discrete a priori estimates}
 Let $x_0 \in L^2(\Omega,\F_0;\L)$ and $g\in\L$ be given.
Then there exists a constant $C \equiv C(\E{\|x_0\|_{\L}}, \|g\|_{\L}) > 0$ such that for any $n \in \N$, $\tau,h>0$
the solution of scheme (\ref{num.visc.TVF}) satisfies
\begin{align}\label{discrete_energy_estimate_viscTVF}
\sup_{i=1,\ldots,N}\E{\nos{\Yi}}&+\frac{1}{4}\E{\sum_{k=1}^N\nos{X_{\eps,\delta,n,h}^{k}-X_{\eps,\delta,n,h}^{k-1}}} \nonumber \\+& \tau \delta \E{\sum_{k=1}^N\nos{\nabla X_{\eps,\delta,n,h}^{k}}}+\frac{\tau \lambda}{2}\E{\sum_{k=1}^N \nos{X_{\eps,\delta,n,h}^{k}}}\leq  C\,,
\end{align}
and a constant $C_{n} \equiv C( \mathbb{E}[\|x_0^n\|_{\Hz}], \|g^n\|_{\Hz}) > 0$ such that for any $\tau,h>0$
\begin{align}\label{discrete_H1_estimate_viscTVF}
\sup_{i=1,\ldots,N}\E{\nos{\nabla \Yi}}+\frac{1}{4}\E{\sum_{k=1}^N\nos{\nabla(X_{\eps,\delta,n,h}^{k}-X_{\eps,\delta,n,h}^{k-1})}}+\tau \delta \E{\sum_{k=1}^N\nos{\Delta_h  X_{\eps,\delta,n,h}^{k}}}\leq  C_{n}.
\end{align}
\end{lems}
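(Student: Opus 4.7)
The plan is to follow the standard discrete energy strategy: derive (\ref{discrete_energy_estimate_viscTVF}) by testing with $v_h = \Yi$, and derive (\ref{discrete_H1_estimate_viscTVF}) by testing with $v_h = -\Delta_h \Yi$. Both arguments rely on the algebraic identity $2(a-b,a) = \|a\|^2 - \|b\|^2 + \|a-b\|^2$ and on handling the stochastic and nonlinear terms individually, followed by a discrete Gronwall step.

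For (\ref{discrete_energy_estimate_viscTVF}), I choose $v_h = \Yi$ in (\ref{num.visc.TVF}). The elliptic term yields $\tau\delta\|\nabla\Yi\|^2$, the nonlinear term is nonnegative since
$(\fe{\Yi},\nabla\Yi) = \int_{\O}\frac{|\nabla\Yi|^2}{\sqrt{|\nabla\Yi|^2+\eps^2}}\,\mathrm{d}x \geq 0,$
and the penalty term gives $\tfrac{\tau\lambda}{2}\|\Yi\|^2 - \tfrac{\tau\lambda}{2}\|g^{h,n}\|^2$ after Young's inequality. For the noise term I split $\Yi = \Ymin + (\Yi - \Ymin)$, so that
$(\Ymin,\Yi)\Delta_i W = \|\Ymin\|^2 \Delta_i W + (\Ymin,\Yi - \Ymin)\Delta_i W;$
taking expectation the first summand vanishes by $\F_{t_{i-1}}$-measurability of $\Ymin$ and independence of $\Delta_i W$, while the second is absorbed via Young into $\tfrac14 \E{\|\Yi - \Ymin\|^2} + \tau\E{\|\Ymin\|^2}$. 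Summing from $k=1$ to $i$ and applying the discrete Gronwall lemma (using $\|g^{h,n}\|\leq \|g\|$ and $\E{\|x_0^{h,n}\|^2}\leq \E{\|x_0\|^2}$ from $\L$-projection stability) yields the uniform bound independent of $n$.

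For (\ref{discrete_H1_estimate_viscTVF}), I choose $v_h = -\Delta_h \Yi$. The left-hand side produces $(\Yi - \Ymin, -\Delta_h\Yi) = (\nabla(\Yi-\Ymin),\nabla\Yi)$, which gives the desired telescoping structure $\tfrac12(\|\nabla\Yi\|^2 - \|\nabla\Ymin\|^2 + \|\nabla(\Yi - \Ymin)\|^2)$. The elliptic term becomes $\tau\delta\|\Delta_h\Yi\|^2$, and the penalty term is treated analogously using $\|\nabla g^{h,n}\|\leq \|\nabla g^n\|\leq \|g^n\|_{\Hz}$. The stochastic term is handled by splitting as above, this time in the gradient norm. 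The \emph{crucial step}, and the main obstacle, is to handle the nonlinear term $-\tau(\fe{\Yi},\nabla\Delta_h\Yi)$: at the continuous level this would follow from the convexity argument via the resolvent of $-\Delta$ (as in the proof of Lemma \ref{laplace_energy_estimate}), but at the discrete finite element level that argument does not transfer directly. This is precisely what Lemma \ref{Lemma_Discret_Laplace} is designed to supply, yielding
$-\ska{\fe{\Yi},\nabla(\Delta_h\Yi)}\geq 0,$
so that this term can simply be dropped.

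Combining the pieces, summing over $k=1,\ldots,i$ and applying the discrete Gronwall lemma yields (\ref{discrete_H1_estimate_viscTVF}) with the stated constant $C_n$ depending on $\E{\|x_0^n\|_{\Hz}}$ and $\|g^n\|_{\Hz}$. The $n$-dependence here is unavoidable since $\|\nabla x_0^n\|$ and $\|\nabla g^n\|$ are only controlled by the Galerkin regularization parameter, whereas the weaker $\L$-bound (\ref{discrete_energy_estimate_viscTVF}) does not require any $\Hz$-regularity of the data and is therefore $n$-uniform. The remark that the proof also covers $\delta = 0$ (scheme (\ref{num.reg.TVF})) is automatic, since all $\delta$-terms are non-negative and are simply dropped.
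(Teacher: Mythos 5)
Your proposal is correct and follows essentially the same route as the paper: testing with $\Yi$ (resp.\ $\Delta_h\Yi$ up to sign), the identity $2(a-b,a)=\|a\|^2-\|b\|^2+\|a-b\|^2$, the splitting $(\Ymin,\Yi)\Delta_i W=\|\Ymin\|^2\Delta_i W+(\Ymin,\Yi-\Ymin)\Delta_i W$ with independence of the increments, dropping the nonnegative nonlinear term via Lemma~\ref{Lemma_Discret_Laplace} in the $\Hz$ case, and a discrete Gronwall step. No substantive differences.
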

\begin{proof}[\textbf{Proof of Lemma \ref{Lemma_Discrete a priori estimates}}]
We set $\vh=\Yi$  in \eqref{num.visc.TVF}, use the identity
$2(a-b)a =a^2 - b^2 + (a-b)^2$
and get for $i=1,\ldots,N$
\begin{align}\label{num.visc.energie.estimate}
\frac{1}{2}\nos{\Yi} +\frac{1}{2}\nos{\Yi-\Ymin}+\tau \delta \nos{\nabla \Yi}+\tau \ska{\fe{\Yi},\nabla \Yi} \nonumber\\
=\frac{1}{2}\nos{\Ymin}-\tau \lambda\left(\nos{\Yi}-\ska{ g^{h,n},\Yi}\right) +\ska{\Ymin,\Yi}\Delta_i W. 
\end{align}
We take expected value in (\ref{num.visc.energie.estimate})
and on noting the properties of
Wiener increments $\E{\Delta_i W}=0$, $\E{ \bes{\Delta_i W}}=\tau$ and the independence of $\Delta_i W$ and $\Ymin$
we estimate the stochastic term as
\begin{align*}
\E{\ska{\Ymin,\Yi}\Delta_i W}&=\E{\ska{\Ymin,\Yi-\Ymin}\Delta_i W}+\E{\ska{\Ymin,\Ymin}\Delta_i W}
\\
\leq& \E{\frac{1}{4}\nos{\Ymin-\Yi}+\nos{\Ymin}\bes{\Delta_i W}} + \E{\|\Ymin\|^2}\E{\Delta_i W}
\\
=&\frac{1}{4}\E{\nos{\Yi-\Ymin}}+\tau\E{\nos{\Ymin}}.
\end{align*}
We neglect the positive term 
\begin{align*}
\ska{\fe{\Yi},\nabla \Yi} \geq 0\, ,
\end{align*}
and get from (\ref{num.visc.energie.estimate}) that
\begin{align*}
\frac{1}{2}\E{\nos{\Yi}} +&\frac{1}{4}\E{\nos{\Yi-\Ymin}}+\tau \delta \E{\nos{\nabla \Yi}}+\frac{\tau\lambda}{2}\E{\nos{\Yi}}\\
\leq& \frac{1}{2}\E{\nos{\Ymin}}+\tau\E{\nos{\Ymin}}+\tau\lambda{\nos{g^{h,n}}}\,. \nonumber
\end{align*}
We sum up the above inequality for $k=1,\ldots,i$ and obtain
\begin{align}\label{num.visc.energie.estimate3}
\frac{1}{2}\E{\nos{\Yi}} +&\frac{1}{4}\E{\sum_{k=1}^i\nos{X_{\eps,\delta,n,h}^{k}-X_{\eps,\delta,n,h}^{k-1}}}+\tau \delta \E{\sum_{k=1}^i\nos{\nabla X_{\eps,\delta,n,h}^{k}}}+\frac{\tau \lambda}{2}\E{\sum_{k=1}^i \nos{ X_{\eps,\delta,n,h}^{k}}}
\nonumber \\ 
\leq& \frac{1}{2}\E{\nos{x^n_0}}+\tau\E{\sum_{k=1}^{i}\nos{X_{\eps,\delta,n,h}^{k-1}}} +T\lambda{\nos{  g^{h,n}}}\,.
\end{align}
By the discrete Gronwall lemma it follows from (\ref{num.visc.energie.estimate3}) that
\begin{align*}
\sup\limits_{i=1,\ldots,N}\E{\nos{ \Yi}} \leq \exp(2T)\Big(\E{\nos{x_0}}+2T\lambda{\nos{g}}\Big).
\end{align*}
We substitute the above estimate into the right-hand side of \eqref{num.visc.energie.estimate3} 
to conclude (\ref{discrete_energy_estimate_viscTVF}).
To show the estimate
(\ref{discrete_H1_estimate_viscTVF}) we set $v_h = \Delta_h \Yi$ in (\ref{num.visc.TVF}) use integration by parts
and proceed analogically to the first part of the proof.
We note that by Lemma~\ref{Lemma_Discret_Laplace} it holds that
\begin{align}\label{discrete_resolvent_estimate}
 \ska{\fe{\Yi},\nabla \Delta_h \Yi} \geq 0.
\end{align}
Hence we may neglect the positive term and get that
\begin{align*}
\frac{1}{2}&\E{\nos{\nabla \Yi}} +\frac{1}{4}\E{\sum_{k=1}^i\nos{\nabla(X_{\eps,\delta,n,h}^{k}-X_{\eps,\delta,n,h}^{k-1})}}+\tau \delta \E{\sum_{k=1}^i\nos{\Delta_h X_{\eps,\delta,n,h}^{k}}}\nonumber\\
&+\frac{\tau \lambda}{2}\E{\sum_{k=1}^i \nos{ \nabla X_{\eps,\delta,n,h}^{k}}}
\leq  \frac{1}{2}\E{\nos{\nabla x^n_0}}+\tau\E{\sum_{k=1}^{i}\nos{\nabla X_{\eps,\delta,n,h}^{k-1}}} +T\lambda{\nos{\nabla   g^n}}\,.
\end{align*}
and obtain (\ref{discrete_H1_estimate_viscTVF}) after an application of the discrete Gronwall lemma.
\end{proof}
We define piecewise constant time-interpolants of the numerical solution
$\{\Yi\}_{i=0}^{N}$ of (\ref{num.visc.TVF}) for $t\in [0,T]$ as
\begin{align}\label{eps_delta_interpol1}
\Yc(t):= \Yi \quad  \mathrm{if}\quad t\in (t_{i-1},t_i]
\end{align}
and
\begin{align}\label{eps_delta_interpol2}
\Ycm(t):= \Ymin\quad \mathrm{if}\quad t\in [t_{i-1},t_i)\,.
\end{align}
We note that (\ref{num.visc.TVF}) 
can be reformulated as
\begin{align}\label{Integralformulation}
 &\ska{\Yc(t),\vh}+\left\langle\int_0^{\theta_{+}(t)} \Aed\Yc(s) \d s,\vh\right\rangle \nonumber \\
 &=\ska{X_{\eps,\delta,n}^0,\vh}+\ska{\int_0^{\theta_+(t)} \Ycm(s) \d W(s),\vh} \qquad \mathrm{for}\,\, t\in [0,T],
\end{align}
where $\theta_+(0):=0$ and $\theta_+(t):=t_i$ if  $t\in (t_{i-1},t_{i}]$.

Estimate \eqref{discrete_energy_estimate_viscTVF} 
yields the bounds
\begin{align}\label{Constant interpolation estimate}
\sup\limits_{t\in [0,T]}\E{\nos{\Yc(t)}} &\leq C, &\sup\limits_{t\in [0,T]}\E{\nos{\Ycm(t)}} \leq C,\\
~ \delta \E{\int_0^T \nos{\nabla\Yc(s)}\d s} &\leq C .\nonumber
\end{align}
Furthermore, \eqref{Constant interpolation estimate} and \eqref{a_bnd} imply
\begin{align}\label{Aed Abschaetzung}
\E{\int_0^T \nos{\Aed \Yc(s)}_{\Hm} \d s} \leq C.
\end{align}
The estimates in (\ref{Constant interpolation estimate}) imply for fixed  $n \in \N$, $\eps,\,\delta>0$
the existence of a subsequence, still denoted by $\{\Yc\}_{\tau,h>0}$,
and a $Y \in L^2(\Omega\times (0,T);\L)\cap L^2(\Omega\times (0,T);\Hz)\cap L^{\infty}((0,T);L^2(\Omega;\L)$, s.t., for $\tau,h \rightarrow 0$
\begin{align}\label{limit_process}
\Yc &\weak Y ~\text{in}~ L^2(\Omega\times (0,T);\L), \nonumber\\
\Yc &\weak Y ~\text{in}~ L^2(\Omega\times (0,T);\Hz),\\
\Yc &\weak^* Y ~\text{in}~ L^{\infty}((0,T);L^2(\Omega;\L)) \nonumber.
\end{align}
In addition, there exists a $\nu \in L^2(\Omega;\L)$ such that $\Yc(T) \rightharpoonup \nu$ 
in $L^2(\Omega;\L)$ as $\tau,h \rightarrow 0$ and the estimate (\ref{Aed Abschaetzung}) 
implies the existence of a $a^{\epsilon,\delta} \in L^2(\Omega\times (0,T);\Hm)$, s.t.,
\begin{align}\label{lim_a}
\Aed \Yc &\weak a^{\eps,\delta} ~\text{in}~ L^2(\Omega\times (0,T);\Hm)\quad \mathrm{for} \,\,\tau,h \rightarrow 0.
\end{align} 
The estimates in (\ref{Constant interpolation estimate}) also implies for fixed  $n \in \N$, $\eps,\, \delta > 0$
the existence of a subsequence, still denoted by $\{\Ycm\}_{\tau>0}$,
and a $Y^- \in L^2(\Omega\times (0,T);\L)$, s.t.,
\begin{align}\label{limit_process_minus}
\Ycm &\weak Y^- ~\text{in}~ L^2(\Omega\times (0,T);\L)\quad \mathrm{for} \,\,\tau,h \rightarrow 0.
\end{align}
Finally, the inequality \eqref{num.visc.energie.estimate3} implies
\begin{align}\label{same_weak}
\lim\limits_{\tau \rightarrow 0}\E{\int_0^T \nos{\Yc(s)-\Ycm(s)} \d s }=&\lim\limits_{\tau \rightarrow 0}\tau \E{\sum_{k=1}^N\nos{X_{\eps,\delta,n}^k-X_{\eps,\delta,n}^{k-1}}} \nonumber\\
\leq& \lim\limits_{\tau \rightarrow 0} C\tau =0\,.  
\end{align}
which shows that the weak limits of $Y$ and $Y^-$ coincide. 

The following result shows that the limit $Y\equiv \Xdn$, i.e., 
that the numerical solution of scheme (\ref{num.visc.TVF}) converges to the unique variational solution of (\ref{vis.TVF}) for $\tau,h \rightarrow 0$.
Owing to the properties (\ref{Monotonicity}), (\ref{a_bnd})
the convergence proof follows standard arguments for the convergence of numerical approximations of monotone equations, see for instance \cite{gm_05}, \cite{em_sis_18},
and is therefore omitted.
We note that the convergence of the whole sequence $\{\Yc\}_{\tau,h>0}$ follows by the uniqueness of the variational solution.
\begin{lems}\label{lemma_Limiten_Gleichung}
Let $x_0 \in L^2(\Omega,\F_0;\L)$ and $g\in\L$ be given, let $\eps, \delta, \lambda>0$, {$n \in \N$} be fixed. 
Further, let $\Xdn$ be the unique variational solution of \eqref{vis.TVF} 
for $ x^n_0=\mathcal{P}_nx_0$, $g^n = \mathcal{P}_n g$
and $\Yc$, $\Ycm$ be the respective time-interpolant (\ref{eps_delta_interpol1}), (\ref{eps_delta_interpol2}) of the numerical solution $\{\Yi\}_{i=1}^N$ of \eqref{num.visc.TVF}.
Then $\Yc$, $\Ycm$ converge to $\Xdn$ for $\tau,h \rightarrow 0$ in the sense that
the weak limits from (\ref{limit_process}), (\ref{lim_a}) satisfy $Y\equiv \Xdn$, $a^{\epsilon,\delta}\equiv \Aed Y \equiv \Aed \Xdn$ and $\nu=Y(T)\equiv \Xdn(T)$.
In addition it holds for almost all $(\omega,t) \in \Omega\times (0,T)$ that
\begin{align*}
Y(t)=Y(0){-}\intt \Aed Y(s) \d s+\intt Y(s)\d W(s),
\end{align*}
and there is an $\L$-valued continuous modification of $Y$ (denoted again as $Y$) such that for all $t \in [0,T]$ 
\begin{align}\label{Ito-Formule_fuer_Limiten}
\frac{1}{2}\nos{Y(t)}= & \frac{1}{2}\nos{Y(0)}{-}\intt \langle \Aed Y(s),Y(s) \rangle +\frac{1}{2}\nos{Y(s)} \d s
\\ \nonumber 
& +\intt (Y(s),Y(s))\d W(s).
\end{align}
\end{lems}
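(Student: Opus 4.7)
The plan is to follow the classical monotonicity (Minty--Browder) argument for monotone SPDE approximations, adapted to the discrete scheme. I will carry out four steps: pass to the weak limit in the discrete integral equation; derive a continuous energy identity for $Y$ via It\^{o}'s formula; derive a discrete energy identity for the interpolant and pass to the limit using lower semicontinuity of norms; and conclude by a monotonicity argument that $a^{\eps,\delta}=\Aed Y$, which together with uniqueness for \eqref{vis.TVF} forces $Y\equiv \Xdn$.

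First I would pass to the limit in \eqref{Integralformulation}. Multiplying by an arbitrary bounded $\F_t$-adapted test process and test function $\vh\in \bigcup_h \Vh$, taking expectation, and using the weak convergences \eqref{limit_process}, \eqref{lim_a} together with \eqref{limit_process_minus} and \eqref{same_weak} (which ensures $Y^-=Y$ in the weak sense and hence that the It\^{o} integral of $\Ycm$ converges to the It\^{o} integral of $Y$), one obtains that for almost all $(\omega,t)$
\begin{align*}
Y(t)=Y(0)-\intt a^{\eps,\delta}(s)\d s+\intt Y(s)\d W(s),
\end{align*}
with $Y(0)=x_0^n$ (which follows since $\Xdnm(0)=\Pm x_0^n \to x_0^n$). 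Since the right-hand side has an $\L$-continuous modification, so does $Y$; in particular $Y(T)=\nu$ a.s. Applying It\^{o}'s formula to $\nos{Y(t)}$ yields a preliminary version of \eqref{Ito-Formule_fuer_Limiten} with $a^{\eps,\delta}$ in place of $\Aed Y$.

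Next I would set $\vh=\Yi$ in \eqref{num.visc.TVF}, use $2(a-b)a=a^2-b^2+(a-b)^2$, sum from $k=1$ to the index corresponding to $\theta_+(t)$, and take expectation. Using the properties of Wiener increments one arrives at a discrete energy identity of the form
\begin{align*}
\tfrac{1}{2}\E{\nos{\Yc(t)}}+\E{\int_0^{\theta_+(t)}\langle \Aed\Yc(s),\Yc(s)\rangle\d s}=\tfrac{1}{2}\E{\nos{x_0^n}}+\tfrac{1}{2}\E{\int_0^{\theta_+(t)}\nos{\Ycm(s)}\d s}+R_{\tau,h},
\end{align*}
where $R_{\tau,h}$ collects quadratic-variation-type remainders that vanish as $\tau,h\rightarrow 0$ thanks to the control of $\E{\sum_k\nos{X^k-X^{k-1}}}$ in \eqref{discrete_energy_estimate_viscTVF}. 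Passing to the limit using lower semicontinuity of the norm squared in the left-hand side and comparing with the It\^{o} identity for $Y$ produces the crucial inequality
\begin{align*}
\limsup_{\tau,h\to 0}\E{\int_0^t\langle\Aed\Yc(s),\Yc(s)\rangle\d s}\leq \E{\int_0^t\langle a^{\eps,\delta}(s),Y(s)\rangle\d s}.
\end{align*}

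Finally, for any $\F_t$-progressively measurable $v\in L^2(\Omega\times(0,T);\Hz)$, the monotonicity \eqref{Monotonicity} of $\Aed$ gives
\begin{align*}
\E{\int_0^t\langle\Aed\Yc-\Aed v,\Yc-v\rangle\d s}\geq 0.
\end{align*}
Expanding, using the hemicontinuity/weak convergence $\Aed v$ stays fixed and $\Yc\weak Y$, $\Aed\Yc\weak a^{\eps,\delta}$, and the $\limsup$-bound above, one obtains
\begin{align*}
\E{\int_0^t\langle a^{\eps,\delta}-\Aed v, Y-v\rangle\d s}\geq 0.
\end{align*}
The standard Minty trick with $v=Y+\mu w$, $\mu\downarrow 0$, for arbitrary $w\in L^2(\Omega\times(0,T);\Hz)$ progressively measurable, together with the demicontinuity of $\Aed$, identifies $a^{\eps,\delta}=\Aed Y$ almost everywhere. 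Consequently $Y$ solves \eqref{vis.TVF}; by the uniqueness of variational solutions asserted in Lemma~\ref{lem_visc_exist} we have $Y\equiv\Xdn$ and hence $\nu=Y(T)\equiv\Xdn(T)$, and \eqref{Ito-Formule_fuer_Limiten} is the It\^{o} identity for $\nos{\Xdn}$. The uniqueness of the limit implies convergence of the whole sequence $\{\Yc\}_{\tau,h}$ and not merely of a subsequence. The main technical obstacle will be the identification step: establishing the $\limsup$ inequality cleanly requires matching the remainder terms of the discrete energy balance (in particular the stochastic quadratic-variation contribution involving $\Ycm$ rather than $\Yc$) with the continuous It\^{o} correction, which is where the estimate \eqref{same_weak} is essential.
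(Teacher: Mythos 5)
Your proposal is correct and is precisely the approach the paper intends: the paper omits this proof, stating that it ``follows standard arguments for the convergence of numerical approximations of monotone equations'' (citing \cite{gm_05}, \cite{em_sis_18}), and your four steps (limit passage in \eqref{Integralformulation}, continuous and discrete energy identities, $\limsup$ comparison, Minty trick plus uniqueness) are exactly that standard argument. The one refinement you should make explicit is the exponential weight $e^{-\kappa t}$ with $\kappa$ large (as the paper itself uses in the proof of Lemma~\ref{Lemma_Convergence_num.vis.Scheme}): since $\nos{\Ycm}$ is only weakly lower semicontinuous, the It\^{o}-correction term $\tfrac12\E{\int\nos{\Ycm}}$ on the right-hand side of your discrete energy balance would otherwise enter your $\limsup$ inequality with the wrong sign, and the weight is the standard device that absorbs it.
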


{The strong monotonicity property \eqref{Monotonicity} of the operator $\Aed$ implies strong convergence of the numerical
approximation in  $L^2(\Omega\times(0,T);\L)$.}
\begin{lems}\label{Lemma_Convergence_num.vis.Scheme}
Let $x_0 \in L^2(\Omega,\F_0;\L)$ and $g\in\L$ be given,
let $\eps, \delta, \lambda>0$, {$n \in \N$} be fixed. Further, let $\Xdn$ be the variational solution of \eqref{vis.TVF} 
for $ x^n_0=\mathcal{P}_nx_0$, $g^n = \mathcal{P}_n g$
and $\Yc$ be the time-interpolant (\ref{eps_delta_interpol1}) of the numerical solution $\{\Yi\}_{i=1}^N$ of \eqref{num.visc.TVF}.
Then the following convergence holds true
\begin{align}
\lim\limits_{\tau,h \rightarrow 0}\nos{\Xed-\Yc}_{L^2(\Omega \times (0,T);\L)}\rightarrow 0.
\end{align}
\end{lems}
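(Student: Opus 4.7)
My approach is to leverage the strong monotonicity of $\Aed$ from \eqref{Monotonicity} (valid since $\delta,\lambda>0$) in order to reduce the strong convergence to a bound on the duality pairing $\mathbb{E}\int_0^T\langle \Aed\Yc-\Aed\Xdn,\Yc-\Xdn\rangle\,ds$, and then to drive this pairing to zero by matching the discrete energy identity for the scheme \eqref{num.visc.TVF} against the continuous It\^o formula \eqref{Ito-Formule_fuer_Limiten} for $Y\equiv\Xdn$.

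First, \eqref{Monotonicity} yields
\[
\lambda\,\mathbb{E}\!\int_0^T\!\|\Yc(s)-\Xdn(s)\|^2\,ds \;\leq\;\mathbb{E}\!\int_0^T\!\langle\Aed\Yc(s)-\Aed\Xdn(s),\Yc(s)-\Xdn(s)\rangle\,ds.
\]
Expanding the right-hand side and using the weak convergences $\Aed\Yc\weak\Aed\Xdn$ in $L^2(\Omega\times(0,T);\Hm)$ and $\Yc\weak\Xdn$ in $L^2(\Omega\times(0,T);\Hz)$ established in Lemma~\ref{lemma_Limiten_Gleichung}, the two cross terms both converge to $\mathbb{E}\int_0^T\langle\Aed\Xdn,\Xdn\rangle\,ds$, so the task reduces to proving
\[
\limsup_{\tau,h\to 0}\mathbb{E}\!\int_0^T\!\langle\Aed\Yc,\Yc\rangle\,ds \;\leq\;\mathbb{E}\!\int_0^T\!\langle\Aed\Xdn,\Xdn\rangle\,ds.
\]

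The upper bound I would obtain by testing \eqref{num.visc.TVF} with $\vh=\Yi$, applying $2(a-b,a)=\|a\|^2-\|b\|^2+\|a-b\|^2$, summing for $i=1,\ldots,N$, taking expectation, and handling $\mathbb{E}[(\Ymin,\Yi)\Delta_iW]$ via the splitting $(\Ymin,\Yi)=\|\Ymin\|^2+(\Ymin,\Yi-\Ymin)$ (the first part has zero expectation) together with Young's inequality and the It\^o isometry $\mathbb{E}(\Delta_iW)^2=\tau$; this produces the discrete energy inequality
\[
\tfrac{1}{2}\mathbb{E}\|\Yc(T)\|^2+\mathbb{E}\!\int_0^T\!\langle\Aed\Yc,\Yc\rangle\,ds \;\leq\;\tfrac{1}{2}\mathbb{E}\|x_0^{h,n}\|^2+\tfrac{1}{2}\mathbb{E}\!\int_0^T\!\|\Ycm\|^2\,ds,
\]
to be compared with the continuous identity obtained by taking expectation in \eqref{Ito-Formule_fuer_Limiten}:
\[
\tfrac{1}{2}\mathbb{E}\|\Xdn(T)\|^2+\mathbb{E}\!\int_0^T\!\langle\Aed\Xdn,\Xdn\rangle\,ds \;=\;\tfrac{1}{2}\mathbb{E}\|x_0^n\|^2+\tfrac{1}{2}\mathbb{E}\!\int_0^T\!\|\Xdn\|^2\,ds.
\]

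Subtracting the two identities and passing to the limit, I would exploit: the strong convergence $x_0^{h,n}\to x_0^n$ in $\L$ for the initial-data term; the weak lower semicontinuity of $\|\cdot\|^2$ applied to $\Yc(T)\weak\Xdn(T)$ for the endpoint term; and the bound \eqref{same_weak} combined with the expansion $\|\Yc\|^2=\|\Yc-\Xdn\|^2+2(\Yc-\Xdn,\Xdn)+\|\Xdn\|^2$ plus the weak convergence $\Yc\weak\Xdn$ to identify $\limsup\mathbb{E}\int_0^T\|\Ycm\|^2\,ds = \mathbb{E}\int_0^T\|\Xdn\|^2\,ds+\limsup\mathbb{E}\int_0^T\|\Yc-\Xdn\|^2\,ds$. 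The main obstacle is absorbing the residual $\tfrac{1}{2}\limsup\mathbb{E}\int_0^T\|\Yc-\Xdn\|^2\,ds$ back into the monotonicity lower bound $\lambda\,\mathbb{E}\int_0^T\|\Yc-\Xdn\|^2\,ds$: when $\lambda>\tfrac{1}{2}$ the absorption is immediate, while for general $\lambda>0$ I would repeat the whole comparison on each sub-interval $[0,t]$ and close via the discrete Gronwall lemma, strengthening the effective coercivity constant through Poincar\'e applied to the $\delta\|\nabla(\cdot)\|^2$-term of \eqref{Monotonicity}. This Gronwall-type closure is the technically delicate step that the standard monotone-operator framework does not directly supply.
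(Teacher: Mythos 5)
Your overall strategy --- strong monotonicity of $\Aed$ plus a comparison of the discrete energy inequality with the continuous It\^o identity \eqref{Ito-Formule_fuer_Limiten} --- is exactly the paper's, and the treatment of the cross terms via the weak convergences of Lemma~\ref{lemma_Limiten_Gleichung} is fine. But there is a genuine gap at the step you yourself flag as delicate: absorbing the residual $\tfrac{1}{2}\limsup\mathbb{E}\int_0^T\|\Yc-\Xdn\|^2\,\mathrm{d}s$ coming from the It\^o correction into the coercivity term $\lambda\,\mathbb{E}\int_0^T\|\Yc-\Xdn\|^2\,\mathrm{d}s$. The standard monotone-operator framework \emph{does} supply the missing device, and the paper uses it: one weights the energy balance with $e^{-\kappa s}$ (equivalently, applies It\^o's formula to $e^{-\kappa t}\|\cdot\|^2$), which produces an extra term $-\kappa\int_0^T e^{-\kappa s}\mathbb{E}\|\Yc(s)\|^2\,\mathrm{d}s$ on the right-hand side of \eqref{psi estimate}; choosing $\kappa\geq 1$ this dominates the It\^o correction $+\int_0^T e^{-\kappa s}\mathbb{E}\|\Yc(s)\|^2\,\mathrm{d}s$ outright, and after subtracting the analogously weighted identity \eqref{Equalitiy} one is left with $\lambda\lim\mathbb{E}\int_0^T e^{-\kappa s}\|\Yc-\Xdn\|^2\,\mathrm{d}s\leq 0$ for \emph{every} $\lambda>0$ (and even $\lambda=0$, via Poincar\'e on the $\delta$-term, as the paper remarks). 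Without this weight your argument only closes for $\lambda>\tfrac12$.

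Your proposed fallback for small $\lambda$ --- repeating the comparison on subintervals $[0,t]$ and closing with a discrete Gronwall argument --- does not work as stated, for two reasons. First, the inequality you obtain on $[0,t]$ has exactly the same structure as on $[0,T]$, namely $\lambda\,\phi(t)\leq\tfrac12\phi(t)+o(1)$ with $\phi(t)=\limsup\mathbb{E}\int_0^t\|\Yc-\Xdn\|^2\,\mathrm{d}s$; there is no small parameter proportional to the interval length to iterate on, so shrinking the interval gains nothing. Second, an iteration over consecutive subintervals would require controlling the endpoint terms $\mathbb{E}\|\Yc(t_j)\|^2-\mathbb{E}\|\Xdn(t_j)\|^2$ at each partition point; weak lower semicontinuity gives a bound of the wrong sign for telescoping, and strong convergence of $\Yc(t_j)$ is precisely what you are trying to prove. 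Replacing this fallback by the $e^{-\kappa s}$-weighting (and keeping track of the interpolation remainder $R_\tau$, which accounts for the mismatch between $t$ and $\theta_+(t)$ and vanishes as $\tau\to 0$) turns your sketch into the paper's proof.
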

\begin{proof}[\textbf{Proof of Lemma \ref{Lemma_Convergence_num.vis.Scheme}}]
The proof follows along the lines of \cite{gm_05}, \cite{em_sis_18}.
We sketch the main steps of the proof for the convenience of the reader.

We note that $\Yc$ satisfies (cf. proof of Lemma~\ref{Lemma_Discrete a priori estimates})
\begin{align}\label{psi estimate}
e^{-\kappa T} \E{\nos{\Yc(T)}} \leq& \E{\nos{x_0^n}} -\kappa\int_0^T e^{-\kappa s} \E{\nos{\Yc(s)}}\d s \nonumber\\ 
-& 2\E{\int_0^T e^{-\kappa s} \langle \Aed \Yc(s),\Yc(s)\rangle  \d s} \\ 
+&\E{\int_0^T e^{-\kappa s} \nos{\Yc(s)} \d s} 
   + \kappa \int_0^T e^{-\kappa s}\be{R_\tau(s)}\d s \nonumber,
\end{align}
where $\displaystyle R_{\tau}(t):= \E{\int_t^{\theta_+(t)}2\langle\Aed \Yc(s),\Yc(s)\rangle-\nos{\Yc(s)}\d s }$.

We reformulate the third term on the right-hand side in \eqref{psi estimate} as
\begin{align*}
&\E{\int_0^T e^{-\kappa s} \langle \Aed \Yc(s),\Yc(s)\rangle \d s}\\
&= \E{\int_0^T e^{-\kappa s} \langle \Aed \Yc(s)-\Aed \Xed(s),\Yc(s)-\Xed(s)\rangle \d s}\\
&+\E{\int_0^T e^{-\kappa s} \langle \Aed \Xed(s),\Yc(s)-\Xed(s)\rangle+\langle\Aed \Yc(s),\Xed(s)\rangle \d s}.
\end{align*}
We substitute the equality above into \eqref{psi estimate} and obtain for $\kappa \geq 1$ that
\begin{align*}
& e^{-\kappa T} \E{\nos{\Yc(T)}}  +2\E{\int_0^T e^{-\kappa s} \langle \Aed \Yc(s)-\Aed \Xed(s),\Yc(s)-\Xe(s)\rangle \d s}
\\
& \leq \E{\nos{x_0^n}} 
- 2\E{\int_0^T e^{-\kappa s} \langle \Aed \Xed(s),\Yc(s)-\Xed(s)\rangle+\langle\Aed \Yc(s),\Xed(s)\rangle \d s}
\\
 &\qquad + \kappa\int_0^T e^{-\kappa s}\be{R_\tau(s)}\d s.
\end{align*}
We observe that $\displaystyle \int_0^T e^{-\kappa s}\be{R_\tau(s)}\d s \rightarrow 0$ for $\tau$.
Hence, by the lower-semicontinuity of norms 
using the convergence properties from Lemma~\ref{lemma_Limiten_Gleichung} 
and the monotonicity property \eqref{Monotonicity}
we get for $\tau,h \rightarrow 0$ that 
\begin{align}\label{inequality 3}
& e^{-\kappa T} \E{\nos{\Xed (T)}}  +2\lambda\lim\limits_{\tau,h\rightarrow 0} \E{\int_0^T e^{-\kappa s} \nos{\Yc(s)-\Xed(s)} \d s} \nonumber
\\
& \qquad + 2\delta\lim\limits_{\tau,h\rightarrow 0} \E{\int_0^T e^{-\kappa s} \nos{\nabla\big(\Yc(s)-\Xed(s)\big)} \d s}
\\ \nonumber
& \leq \E{\nos{x_0^n}} - 2\E{\int_0^T e^{-\kappa s} \langle \Aed \Xed(s),\Xed(s)\rangle \d s} .
\end{align}
It is not difficult to see that \eqref{Ito-Formule_fuer_Limiten} for $Y\equiv \Xed$ implies
\begin{align} \label{Equalitiy}
& e^{-\kappa T} \E{\nos{\Xed(T)}} = \E{\nos{x_0^n}}  
- 2\E{\int_0^T e^{-\kappa s} \langle \Aed(s)\Xed,\Xed(s)\rangle \d s }\\
& - \kappa\int_0^T e^{-\kappa s} \E{\nos{\Xed(s)}}\d s +\E{\int_0^T e^{-\kappa s}\nos{\Xed(s)} \d s} \nonumber.
\end{align}
We subtract the equality \eqref{Equalitiy} from \eqref{inequality 3} and obtain for $\kappa \geq 1$
\begin{align*}
\lambda \lim\limits_{\tau,h\rightarrow \infty} \E{\int_0^T e^{-\kappa s} \nos{\Yc(s)-\Xed(s)} \d s} \leq  0.
\end{align*}
Hence, we conclude that $ \Yc \rightarrow \Xdn $ in $L^2(\Omega;L^2((0,T);\L)$.
\end{proof}
{\begin{bems}
It is obvious from the proof of Lemma~\ref{Lemma_Convergence_num.vis.Scheme}
that the strong convergence in  $L^2(\Omega\times(0,T);\L)$ 
remains valid for $\lambda=0$ due to (\ref{Monotonicity}) by the Poincar\'e inequality.
\end{bems}}

Next lemma guarantees the convergence of the numerical solution of scheme \eqref{num.visc.TVF}
to the numerical solution of scheme (\ref{num.reg.TVF}) for $\delta \rightarrow 0$.
\begin{lems}\label{Lemma_Difference-num.Schemes}
{Let $x_0 \in L^2(\Omega,\F_0;\L)$ and $g\in\L$ be given.
Then for each $n \in \mathbb{N}$ there exists a constant $C\equiv C(T)>0$, $C_n\equiv C(\mathbb{E}[\|x_0^n\|_{\Hz}], \|g^n\|_{\Hz})>0$ }
such that for any $N\in\N$, $\delta>0$, $n\in \mathbb{N}$, $h,\eps \in (0,1]$ the following estimate holds for the difference of numerical solutions of (\ref{num.reg.TVF}) and (\ref{num.visc.TVF}):
\begin{align*}
\max\limits_{i=1,\ldots,N}\E{\nos{\Xi-\Yi}} \leq C(C_{n}\delta+ \E{\nos{x_0^h-x_0^{h,n}}}+ \lambda\nos{g^h-g^{h,n}}). 
\end{align*} 
\end{lems}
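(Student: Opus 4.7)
The plan is to subtract schemes \eqref{num.visc.TVF} and \eqref{num.reg.TVF}, test with the difference $e_i := \Xi - \Yi \in \mathbb{V}_h$, take expectation, and apply the discrete Gronwall lemma. The key observation is that although scheme \eqref{num.visc.TVF} carries an additional viscous term $-\tau \delta \ska{\nabla \Yi,\nabla v_h}$ which is not present in \eqref{num.reg.TVF}, this term can be absorbed using the $\mathbb{H}^1$-estimate \eqref{discrete_H1_estimate_viscTVF} together with the fact that $\tau\delta\sum_k \E{\nos{\Delta_h X_{\eps,\delta,n,h}^k}} \leq C_n$.

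First I would subtract the two schemes tested against $v_h = e_i$. Using the algebraic identity $2(a-b, a) = \no{a}^2 - \no{b}^2 + \no{a-b}^2$ and discarding the non-negative TV-difference contribution via the convexity inequality \eqref{eps.convexity.inequality},
\begin{align*}
\ska{\fep(\Xi) - \fep(\Yi), \nabla(\Xi - \Yi)} \geq 0,
\end{align*}
one obtains
\begin{align*}
\tfrac{1}{2}\nos{e_i} - \tfrac{1}{2}\nos{e_{i-1}} + \tfrac{1}{2}\nos{e_i - e_{i-1}} + \tau\lambda \nos{e_i} \leq \tau\delta \ska{\nabla \Yi, \nabla e_i} + \tau\lambda \ska{g^h - g^{h,n}, e_i} + \ska{e_{i-1}, e_i}\Delta_i W.
\end{align*}

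Second, I would handle the viscous term by discrete integration by parts, $\ska{\nabla \Yi, \nabla e_i} = -\ska{\Delta_h \Yi, e_i}$, followed by Young's inequality
\begin{align*}
\tau\delta |\ska{\Delta_h \Yi, e_i}| \leq \tfrac{\tau}{4}\nos{e_i} + \tau \delta^2 \nos{\Delta_h \Yi}.
\end{align*}
The penalization defect is controlled by $\tau\lambda|\ska{g^h-g^{h,n},e_i}| \leq \tfrac{\tau}{4}\nos{e_i} + \tau\lambda^2 \no{g^h-g^{h,n}}^2$. For the stochastic term I would split $\ska{e_{i-1},e_i}\Delta_i W = \ska{e_{i-1},e_i - e_{i-1}}\Delta_i W + \nos{e_{i-1}}\Delta_i W$; the second summand vanishes in expectation by $\F_{t_{i-1}}$-measurability of $e_{i-1}$ and independence of $\Delta_i W$, while Young's inequality and $\E{|\Delta_i W|^2}=\tau$ give
\begin{align*}
\E{\ska{e_{i-1},e_i-e_{i-1}}\Delta_i W} \leq \tfrac{1}{4}\E{\nos{e_i-e_{i-1}}} + \tau \E{\nos{e_{i-1}}}.
\end{align*}

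Third, after taking expectation and absorbing the $\tfrac{1}{4}\E{\nos{e_i-e_{i-1}}}$ term into the left-hand side, summing from $k=1$ to $i$ yields
\begin{align*}
\tfrac{1}{2}\E{\nos{e_i}} \leq \tfrac{1}{2}\E{\nos{x_0^h - x_0^{h,n}}} + \tau\delta^2 \sum_{k=1}^{i}\E{\nos{\Delta_h X_{\eps,\delta,n,h}^k}} + T\lambda\nos{g^h-g^{h,n}}^2 + C\tau\sum_{k=1}^{i}\E{\nos{e_{k-1}}}.
\end{align*}
By estimate \eqref{discrete_H1_estimate_viscTVF}, $\tau\delta \sum_{k=1}^{N}\E{\nos{\Delta_h X_{\eps,\delta,n,h}^k}} \leq C_n$, so the second term is bounded by $C_n \delta$. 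The discrete Gronwall lemma then delivers the asserted bound with constants $C(T)$ and $C_n$ as claimed.

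The main obstacle is handling the viscous term $\tau\delta\ska{\nabla \Yi, \nabla e_i}$: one cannot directly invoke a uniform gradient bound on $e_i$ (since $\Xi$ enjoys no discrete $\mathbb{H}^1$ control analogous to \eqref{discrete_H1_estimate_viscTVF}), so one must pass to the discrete Laplacian formulation and exploit that the $\mathbb{H}^2$-type bound in \eqref{discrete_H1_estimate_viscTVF} scales as $\delta^{-1}$, which after multiplication by $\delta^2$ yields the required $\mathcal{O}(\delta)$ decay as $\delta \downarrow 0$.
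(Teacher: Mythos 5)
Your proposal is correct and follows essentially the same route as the paper: subtract the two schemes, test with the difference, drop the monotone TV contribution via \eqref{eps.convexity.inequality}, control the viscous defect through $\tau\delta^2\sum_k\E{\nos{\Delta_h X_{\eps,\delta,n,h}^k}}\leq C_n\delta$ using \eqref{discrete_H1_estimate_viscTVF}, treat the noise term by the standard splitting with independence of the increments, and conclude with the discrete Gronwall lemma. The only (immaterial) difference is in how the Young-inequality remainders $\nos{e_i}$ are absorbed — the paper weights them by $\lambda$ so they cancel against the penalization term, while you feed them into the Gronwall sum.
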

We note that the $n$-dependent constant $C_n$ in the estimate above is due to the a priori estimate
(\ref{discrete_H1_estimate_viscTVF}), for $\Hz$-regular data $x_0$, $g$ it holds that $C_n\equiv C(\mathbb{E}[\|x_0\|_{\Hz}], \|g\|_{\Hz})$
by the stability of the discrete $\L$-projection $\mathcal{P}_h:\Hz\rightarrow \mathbb{V}_h$ in $\Hz$.
\begin{proof}[\textbf{Proof of Lemma \ref{Lemma_Difference-num.Schemes}}]
We  define $\Zi:=\Xi -\Yi$. From \eqref{num.reg.TVF} and \eqref{num.visc.TVF} we get
\begin{align*}
\ska{\Zi,\vh}=& \ska{\Zmin.\vh}-\tau \delta \ska{\Delta_h \Yi,\vh}\\&-\tau \ska{\fe{\Xi},\vh }-\tau \ska{\fe{\Yi},\nabla\vh }\\&-\tau\lambda\ska{\Zi,\vh}-{\tau\lambda\ska{g^h - g^{h,n},\vh}}
\\
&+\ska{\Zmin,\vh}\Delta_i W.
\end{align*}
We set $\vh=\Zi$ and obtain
\begin{align*}
\ska{\Zi-\Zmin,\Zi} =&  -\tau \delta \ska{\Delta_h \Yi,\Zi}\\&-\tau \ska{\fe{\Xi}- \fe{\Yi},\nabla\Zi }\\&-\tau\lambda\nos{\Zi}-\tau\lambda\ska{g^h- g^{h,n},\Zi}
\\
&+\ska{\Zmin,\Zi}\Delta_i W.
\end{align*}
We note that 
\begin{align*}
\ska{\Zi-\Zmin,\Zmin}= \frac{1}{2}\nos{\Zi}-\frac{1}{2}\nos{\Zmin}+\frac{1}{2}\nos{\Zi -\Zmin}\,,
\end{align*}
and by the Cauchy-Schwarz and Young's inequalities

\begin{align*}
&\tau \delta \ska{\Delta_h \Yi,\Zi}\leq  \frac{\tau \delta^2}{2\lambda } \nos{\Delta_h \Yi} +\frac{\tau \lambda}{2}\nos{\Zi},\\
  &\tau \lambda \ska{g^h-g^{h,n} ,\Zi}\leq  \frac{\tau \lambda}{2} \nos{g^h-g^{h,n}} +\frac{\tau \lambda}{2}\nos{\Zi}.
\end{align*}

From the convexity (\ref{eps.convexity.inequality}) it follows that
\begin{align*}
-\tau \ska{\fe{\Xi}-\fe{\Yi},\nabla(\Xi-\Yi)}\leq 0.
\end{align*}
Hence, we obtain that
\begin{align}\label{zest1}
&\frac{1}{2}\nos{\Zi}+\frac{1}{2}\nos{\Zi -\Zmin}\\
&\leq \frac{1}{2}\nos{\Zmin}+\frac{\tau \delta^2}{2\lambda } \nos{\Delta_h \Yi}+ \frac{\tau \lambda}{2} \nos{g^h-g^{h,n}} +\ska{\Zmin,\Zi}\Delta_i W\,. \nonumber
\end{align}
We estimate the last term on the right-hand side above as
\begin{align*}
\ska{\Zmin,\Zi}\Delta_i W=\ska{\Zmin,\Zi-\Zmin}\Delta_i W+\nos{\Zmin}\Delta_i W\\
\leq \frac{1}{2}\nos{\Zi-\Zmin}+\frac{1}{2}\nos{\Zmin}\bes{\Delta_i W} +\nos{\Zmin}\Delta_i W,
\end{align*}
and substitute the above identity into (\ref{zest1}) 
\begin{align*}
\frac{1}{2}\nos{\Zi}+\frac{1}{2}\nos{\Zi -\Zmin}\leq& \frac{1}{2}\nos{\Zmin}+\frac{\tau \delta^2}{2\lambda } \nos{\Delta_h \Yi}+ \frac{\tau \lambda}{2} \nos{g^h-g^{h,n}} +\frac{1}{2}\nos{\Zi-\Zmin}\\
&+\frac{1}{2}\nos{\Zmin}\bes{\Delta_i W}+\nos{\Zmin}\Delta_i W\,.
\end{align*}
Next, we sum up the above inequality up to $i\leq N$ and obtain
\begin{align*}
\frac{1}{2}\nos{\Zi} \leq& \frac{1}{2}\nos{Z^0_{\epsilon,h}}+\frac{\tau \delta^2}{2\lambda } \sum\limits_{k=1}^{i} \nos{\Delta_h X_{\epsilon,\delta,h}^k}
+\frac{1}{2}\sum\limits_{k=1}^{i}\nos{Z^{k-1}_{\epsilon,h}}\bes{\Delta_k W}+\sum\limits_{k=1}^{i}\nos{Z^{k-1}_{\epsilon,h}}\Delta_k W\\&+ \frac{T \lambda}{2} \nos{g^h- g^{h,n}}.
\end{align*}
After taking expectation in the above and using the independence properties of Wiener increments
and the estimate (\ref{discrete_H1_estimate_viscTVF}) we arrive at
\begin{align*}
\frac{1}{2}\E{\nos{\Zi}} \leq &\frac{1}{2}\nos{Z^0_{\epsilon,h}}+\frac{\tau \delta^2}{2\lambda } \E{\sum\limits_{k=1}^{i} \nos{\Delta_h X_{\epsilon,\delta,n,h}^k}}
+\frac{\tau}{2}\sum\limits_{k=1}^{i}\E{\nos{Z^{k-1}_{\epsilon,h}}}\\
\leq&C_n \delta+\frac{1}{2}\E{\nos{Z^0_{\epsilon,h}}} +\frac{\tau}{2}\sum\limits_{k=0}^{i-1}\E{\nos{Z^k_{\epsilon,h}}}+ \frac{T \lambda}{2} \nos{g^h-g^{h,n}}.
\end{align*}
with $C_n \equiv C(\|x_0^n\|_{\Hz}, \|g^n\|_{\Hz})$.
Finally, the Discrete Gronwall lemma yields for $i=1,\ldots,N$ that
\begin{align}\label{m,n gronwall}
\E{\nos{\Zi}} \leq \exp(T)(C_{n}\delta+ \frac{1}{2}\E{\nos{x_0^h- x_0^{h,n}}}+ \frac{T \lambda}{2}\nos{g^h-g^{h,n}}).
\end{align}
which concludes the proof .
\end{proof}
We define piecewise constant time-interpolant
of the discrete  solution $\{\Xi \}_{i=0}^N$ of (\ref{num.reg.TVF}) for $t\in[0,T)$ as
\begin{align}\label{eps_interpol}
\Xc(t) = \Xi\quad \mathrm{if}\quad t \in (t_{i-1},t_i]. 
\end{align}
We are now ready to state the second main result of this paper
which is the convergence of the numerical approximation (\ref{num.reg.TVF}) to the unique SVI solution of the total variation flow (\ref{TVF})
(cf. Definition~\ref{def_varsoleps}).
\begin{thms}\label{Thm_Convergence_num.reg.Scheme}
Let $X$ be the SVI solution of \eqref{TVF} and let $\Xc$ be the time-interpolant (\ref{eps_interpol})
of the numerical solution of the scheme \eqref{num.reg.TVF}.
Then the following convergence holds true
\begin{align}\label{numconv}
\lim\limits_{\epsilon\rightarrow 0}\lim\limits_{\tau,h \rightarrow 0}\nos{X-\Xc}_{L^2(\Omega \times (0,T);\L)}\rightarrow 0.
\end{align}
\end{thms}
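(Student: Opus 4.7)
The plan is to decouple the two limits by inserting the SVI solution $\Xe$ of the regularized equation \eqref{reg.TVF} as an intermediate object:
\begin{align*}
\|X-\Xc\|_{L^2(\Omega\times(0,T);\L)} \leq \|X-\Xe\|_{L^2(\Omega\times(0,T);\L)} + \|\Xe-\Xc\|_{L^2(\Omega\times(0,T);\L)}.
\end{align*}
The first summand will tend to zero in the outer limit $\epsilon\to 0$ by Theorem~\ref{Thm.SVI}, which in fact supplies convergence in the stronger norm $L^2(\Omega;C([0,T];\L))$. The bulk of the work will therefore be to show the inner convergence $\Xc\to \Xe$ in $L^2(\Omega\times(0,T);\L)$ as $\tau,h\to 0$, for each fixed $\epsilon\in(0,1]$.

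To establish this inner convergence I would introduce the viscous variational solution $\Xdn$ of \eqref{vis.TVF} and its numerical counterpart $\Yc$ from \eqref{num.visc.TVF} as further intermediates, and use the triangle inequality
\begin{align*}
\|\Xe-\Xc\| \leq \underbrace{\|\Xe-\Xdn\|}_{(A)} + \underbrace{\|\Xdn-\Yc\|}_{(B)} + \underbrace{\|\Yc-\Xc\|}_{(C)},
\end{align*}
all norms being $L^2(\Omega\times(0,T);\L)$-norms. Term $(B)$ vanishes as $\tau,h\to 0$ (for fixed $\epsilon,\delta,n$) by Lemma~\ref{Lemma_Convergence_num.vis.Scheme}. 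For term $(C)$, Lemma~\ref{Lemma_Difference-num.Schemes} combined with the $\L$-contractivity of the orthogonal projection $\mathcal{P}_h$ (so that $\|x_0^h-x_0^{h,n}\|\leq\|x_0-x_0^n\|$ and similarly for $g$) yields a bound of the form
\begin{align*}
(C)^2 \leq T\,C\bigl(C_n\delta + \E{\|x_0-x_0^n\|^2}+\lambda\|g-g^n\|^2\bigr)
\end{align*}
that is uniform in $\tau$ and $h$. Term $(A)$ is controlled by the two-level regularization argument already carried out in the proof of Theorem~\ref{Thm_reg.SVI}, which yields $\Xdn\to\Xe$ in $L^2(\Omega;C([0,T];\L))$ when $\delta\to 0$ and $n\to\infty$.

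The delicate point is the strict order in which the auxiliary parameters must be sent to their limits. I would first take $\tau,h\to 0$ with $\epsilon,\delta,n$ frozen to eliminate $(B)$ and to freeze the $h$-independent upper bound for $(C)$; next $\delta\to 0$ with $\epsilon,n$ frozen to kill the $C_n\delta$ summand; and finally $n\to\infty$ to eliminate the remaining terms in $(C)$ via $x_0^n\to x_0$ in $L^2(\Omega;\L)$ and $g^n\to g$ in $\L$, simultaneously driving $(A)$ to zero. This will establish the inner convergence, and Theorem~\ref{Thm.SVI} will then close the outer limit $\epsilon\to 0$. The main obstacle I expect is precisely this bookkeeping of the four small parameters $\epsilon,\tau,\delta,n$: the constant $C_n$ coming from Lemma~\ref{Lemma_Discrete a priori estimates} depends on $\E{\|x_0^n\|_{\Hz}^2}$ and $\|g^n\|_{\Hz}^2$ and in general blows up as $n\to\infty$, which forces $\delta\to 0$ strictly before $n\to\infty$ and prevents any diagonal-limit shortcut.
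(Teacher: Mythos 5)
Your proposal is correct and follows essentially the same route as the paper: the paper splits the error into five terms $\|X-\Xe\|+\|\Xe-\Xe_n\|+\|\Xe_n-\Xdn\|+\|\Xdn-\Yc\|+\|\Yc-\Xc\|$, handles them with Theorem~\ref{Thm.SVI}, the stability estimate \eqref{reg_stability_inequality}, the convergence \eqref{delta_limit}, Lemma~\ref{Lemma_Convergence_num.vis.Scheme} and Lemma~\ref{Lemma_Difference-num.Schemes}, and takes the limits in exactly your order $\tau,h\to 0$, then $\delta\to 0$, then $n\to\infty$, then $\epsilon\to 0$. Your term $(A)$ merely merges the paper's two middle terms, and your remark that the $n$-dependence of $C_n$ forces $\delta\to 0$ before $n\to\infty$ is precisely the bookkeeping the paper performs.
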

\begin{proof}[\textbf{Proof of Theorem \ref{Thm_Convergence_num.reg.Scheme}}]
For $x_0 \in L^2(\Omega,\F_0;\L)$ and $g\in\L$  
we define the $\Hz$-approximating sequences $\{x_0^n\}_{n\in \mathbb{N}}\subset \mathbb{H}^1_0$, $x_0^n\rightarrow x_0\in L^2(\Omega,\F_0;\L)$,
$\{g^n\}_{n\in \mathbb{N}}\subset \mathbb{H}^1_0$, $n\in\mathbb{N}$, $g^n\rightarrow g\in\L$
via the $\L$-projection onto $\mathbb{V}_n\subset\Hz$. 
We consider the solutions  $\Xe,\Xdn$  of \eqref{reg.TVF}, \eqref{vis.TVF}, respectively,
and denote by $\Xe_n$ the SVI solution of \eqref{reg.TVF} for $x_0\equiv x_0^n$, $g\equiv g^n$.
Furthermore, we recall that the interpolant $\Yc$ of the numerical solution of (\ref{num.visc.TVF}) was defined in (\ref{eps_delta_interpol1}).

We split the numerical error as
{\begin{align}\label{err_ineq}
\nonumber
\frac{1}{5}\nos{X-\Xc}_{L^2(\Omega \times (0,T);\L)} \leq& \nos{X-\Xe}_{L^2(\Omega \times (0,T);\L)} +\nos{\Xe-\Xe_n}_{L^2(\Omega \times (0,T);\L)}
\\
&+ \nos{\Xe_n-\Xdn}_{L^2(\Omega \times (0,T);\L)}+\nos{\Xdn-\Yc}_{L^2(\Omega \times (0,T);\L)}
\\
\nonumber
&+\nos{\Yc-\Xc}_{L^2(\Omega \times (0,T);\L)}
\\
\nonumber
& =:I+II+III+IV+V.
\end{align}
}
By Theorem  \ref{Thm_reg.SVI} it follows that
\begin{align*}
 \lim\limits_{\epsilon \rightarrow 0}\, I = \lim\limits_{\epsilon \rightarrow 0} \nos{X-\Xe}_{L^2(\Omega \times (0,T);\L)}=0.
\end{align*}
To estimate the second term we consider the solutions $\Xe_n$ of (\ref{reg.TVF}) with $x_0\equiv x_0^n$ and $g\equiv g^n$.
From  (\ref{reg_stability_inequality}) we deduce that
{
\begin{align*}
 \lim\limits_{n \rightarrow \infty}\, II  = & \lim_{n\rightarrow \infty}
 \nos{\Xe-\Xe_n}_{L^2(\Omega \times (0,T);\L)}
=  0\, .
\end{align*}
}
We use (\ref{delta_limit})  to estimate the third term  as 
$$
\lim\limits_{\delta\rightarrow 0}III=\lim_{\delta\rightarrow 0}\nos{\Xe_n-\Xdn}_{L^2(\Omega \times (0,T);\L)} = 0\,.
$$
The fourth term is estimated by Lemma~\ref{Lemma_Convergence_num.vis.Scheme}
\begin{align*}
\lim\limits_{\tau,h\rightarrow 0}IV=\lim\limits_{\tau,h\rightarrow 0}\nos{\Xdn-\Yc}_{L^2(\Omega \times (0,T);\L)} =0.
\end{align*} 
For the last term we use Lemma \ref{Lemma_Difference-num.Schemes}
\begin{align*}
\lim\limits_{n \rightarrow \infty}\limsup_{\delta \rightarrow 0} V=\lim\limits_{n \rightarrow \infty}\limsup\limits_{\delta \rightarrow 0}\nos{\Yc-\Xc}_{L^2(\Omega \times (0,T);\L)}=0.
\end{align*}
Finally, we consecutively take  $\tau,h \rightarrow 0$,  $\delta\rightarrow 0$,
$n\rightarrow \infty$ and $\epsilon \rightarrow 0$ in (\ref{err_ineq})
and use the above convergence of $I-V$ to obtain (\ref{numconv}).
\end{proof}

\begin{bems}
We note that the convergence analysis simplifies in the case that the problem data have higher regularity. For $x_0,g\in\Hz$
it is possible to show that the problem \eqref{reg.TVF} admits a unique variational solution (which is also a SVI solution of \eqref{reg.TVF} by uniqueness)
by a slight modification of standard monotonicity arguments.
This is due to the fact that the operator (\ref{Operator}) retains all its properties for $\delta=0$
except for the coercivity. The coercivity is only required to guarantee $\Hz$-stability of the solution, nevertheless the stability
can also be obtained directly by the It\^o formula on the continuous level, cf. Lemma~\ref{laplace_energy_estimate}, or analogically to Lemma~\ref{Lemma_Discrete a priori estimates} on the discrete level, even for $\delta=0$.
Consequently, for $\Hz$-data the convergence of the numerical solution $\Xc$ can be shown as in Theorem~\ref{Thm_Convergence_num.reg.Scheme} without the additional $\delta$-regularization step.
\end{bems}

We conclude this section by showing unconditional stability of scheme \eqref{num.reg.TVF}, i.e.,
we show that the numerical solution satisfies a discrete energy law which is an analogue of the energy estimate (\ref{cont_ener}).
\begin{lems}\label{eps.num.energy.estimates}
Let $x_0, g\in\L$ and $T> 0$.
Then there exist a constant $C\equiv C(T)$ such that 
the solutions of scheme \eqref{num.reg.TVF} satisfy for any $\eps,h\in (0,1]$, $N\in \mathbb{N}$ 
\begin{align}\label{disc_ener}
 \sup_{i=1,\ldots,N}\frac{1}{2}\E{\nos{\Xi}}&+\tau \E{\sum\limits_{i=1}^N\mathcal{J}_{\epsilon}(\Xi) +\frac{\lambda}{2}\nos{\Xi-g^h}}\ \nonumber\\
&\leq C\left(\frac{1}{2}\nos{x_0}+ T\eps\be{\O}+\frac{T\lambda}{2}\nos{g})\right)\,.
\end{align}
\end{lems}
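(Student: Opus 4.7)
The plan is to mimic, at the discrete level, the It\^o-type energy computation used for the continuous problem in \eqref{cont_ener}, with the role of It\^o's formula played by the algebraic identity $(a-b)a=\frac12(a^2-b^2+(a-b)^2)$.

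First I would test \eqref{num.reg.TVF} with $v_h=\Xi$. Using the above identity on $(\Xi-\Xmin,\Xi)$ produces
\begin{align*}
\tfrac12\nos{\Xi}-\tfrac12\nos{\Xmin}+\tfrac12\nos{\Xi-\Xmin}+\tau\ska{\fe{\Xi},\nabla\Xi}+\tau\lambda\ska{\Xi-g^h,\Xi}=\ska{\Xmin,\Xi}\Delta_iW\,.
\end{align*}
Next, I would exploit convexity via \eqref{eps.convexity.inequality} with test function $0$ to bound $\ska{\fe{\Xi},\nabla\Xi}\geq\mathcal{J}_{\eps}(\Xi)-\mathcal{J}_{\eps}(0)=\mathcal{J}_{\eps}(\Xi)-\eps\be{\O}$, and the elementary identity $\ska{\Xi-g^h,\Xi}=\tfrac12\nos{\Xi-g^h}+\tfrac12\nos{\Xi}-\tfrac12\nos{g^h}$ (or equivalently, convexity of $\tfrac\lambda2\nos{\cdot-g^h}$) to produce the desired penalty term on the left-hand side plus an admissible $\nos{g^h}$ term on the right-hand side.

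For the stochastic increment I would use the standard trick of splitting
\begin{align*}
\ska{\Xmin,\Xi}\Delta_iW=\ska{\Xmin,\Xi-\Xmin}\Delta_iW+\nos{\Xmin}\Delta_iW\,,
\end{align*}
estimating the first piece by Young's inequality in the form $\ska{\Xmin,\Xi-\Xmin}\Delta_iW\leq \tfrac14\nos{\Xi-\Xmin}+\nos{\Xmin}\bes{\Delta_iW}$, and using $\mathcal{F}_{t_{i-1}}$-measurability of $\Xmin$ together with $\E{\Delta_iW}=0$ and $\E{\bes{\Delta_iW}}=\tau$ after taking expectation, so that the $\tfrac14\nos{\Xi-\Xmin}$ term can be absorbed into the $\tfrac12\nos{\Xi-\Xmin}$ on the left-hand side and the remaining stochastic contribution becomes $\tau\E{\nos{\Xmin}}$.

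I would then sum from $k=1$ to $i\leq N$ and use $\nos{x_0^h}\leq\nos{x_0}$, $\nos{g^h}\leq\nos{g}$ (stability of $\mathcal{P}_h$) to obtain
\begin{align*}
\tfrac12\E{\nos{\Xi}}+\tau\sum_{k=1}^{i}\E{\mathcal{J}_{\eps}(X^{k}_{\eps,h})+\tfrac{\lambda}{2}\nos{X^{k}_{\eps,h}-g^h}}\leq \tfrac12\nos{x_0}+T\eps\be{\O}+\tfrac{T\lambda}{2}\nos{g}+\tau\sum_{k=0}^{i-1}\E{\nos{X^{k}_{\eps,h}}}\,.
\end{align*}
Dropping the nonnegative energy/penalty sum gives an inequality to which the discrete Gronwall lemma applies and yields $\sup_{i}\E{\nos{\Xi}}\leq C(T)(\nos{x_0}+T\eps\be{\O}+T\lambda\nos{g})$. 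Feeding this back into the inequality above for $i=N$ absorbs the residual sum on the right and delivers \eqref{disc_ener}. The only delicate point is the correct balancing of the $\tfrac14$-factor for the martingale increment against the positive $\tfrac12\nos{\Xi-\Xmin}$ produced by the algebraic identity; beyond that, no new ideas are needed since the convexity inequality \eqref{eps.convexity.inequality} and the properties of $\mathcal{P}_h$ have already been established.
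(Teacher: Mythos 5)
Your proposal is correct and follows essentially the same route as the paper's proof: testing with $v_h=\Xi$, the algebraic identity $2(a-b)a=a^2-b^2+(a-b)^2$, convexity of $\mathcal{J}_{\eps,\lambda}$ with $\mathcal{J}_\eps(0)=\eps\be{\O}$, the splitting of $(\Xmin,\Xi)\Delta_i W$ with Young's inequality and the independence/zero-mean properties of the Wiener increments, followed by summation and the discrete Gronwall lemma. The only differences are cosmetic (your Young constant $\tfrac14$ versus the paper's $\tfrac12$, and your explicit mention of the $\L$-stability of $\mathcal{P}_h$).
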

\begin{proof}[\textbf{Proof of Lemma \ref{eps.num.energy.estimates}}]
We set $v_h\equiv\Xi$ in \eqref{num.reg.TVF} and obtain
\begin{align}\label{1}
& \frac{1}{2}\nos{\Xi}+\frac{1}{2}\nos{\Xi-\Xmin}+\tau \ska {\fe{\Xi},\nabla \Xi}+\tau \lambda(\Xi-g^h,\Xi)
\nonumber \\
& \qquad =\frac{1}{2}\nos{\Xmin}+(\Xmin,\Xi)\Delta_i W.
\end{align}
Using the the convexity of $\mathcal{J}_\eps$ along with the inequality 
\begin{align*}
(\Xmin,\Xi)\Delta_i W & = (\Xmin,\Xi-\Xmin)\Delta_i W + \|\Xmin\|^2\Delta_i W
\\
& \leq
\frac{1}{2} \|\Xi-\Xmin\|^2 + \frac{1}{2}\|\Xmin\|^2|\Delta_i W|^2 + \|\Xmin\|^2\Delta_i W,
\end{align*}
we get from (\ref{1}) that
\begin{align}\label{2}
\frac{1}{2}\nos{\Xi}&+\frac{1}{2}\nos{\Xi-\Xmin}+\tau \mathcal{J}_{\epsilon}(\Xi) +\frac{\tau\lambda}{2}\nos{\Xi-g^h}\nonumber\\
\leq&\tau\mathcal{J}_{\epsilon}(0) +\frac{1}{2}\nos{\Xmin} 
+\frac{1}{2}\nos{\Xi-\Xmin} \\
&+\frac{1}{2}\nos{\Xmin}|\Delta_i W|^2+\nos{\Xmin}\Delta_i W.\nonumber 
\end{align}
After taking the expectation and summing up over $i$ in \eqref{2},
and noting that $\mathcal{J}_{\epsilon}(0) =\epsilon\be{\O}$ we obtain
\begin{align*}
\frac{1}{2}\E{\nos{\Xi}}&+\tau \E{\sum\limits_{k=1}^i\mathcal{J}_{\epsilon}(X^{k}_{\epsilon,h}) +\frac{\lambda}{2}\nos{X^{k}_{\epsilon,h}-g^h}} \nonumber\\
\leq &\frac{1}{2}\nos{x_0}+T\Big(\epsilon\be{\O}+\frac{\lambda}{2}\nos{g}\Big)+\frac{\tau}{2}\E{\sum\limits_{k=0}^{i-1}\nos{X^{k}_{\epsilon,h}}}.
\end{align*}
Hence (\ref{disc_ener}) follows after an application of the discrete Gronwall lemma.
\end{proof}
}

\section{Numerical experiments}\label{sec_sim}
We perform numerical experiments using a generalization of the fully discrete finite element {scheme} (\ref{num.reg.TVF})
on the unit square $\D= (0,1)^2$. 
The scheme for $i=1, \dots, N$ then reads as
\begin{align}\label{fem_scheme}
\ska{X^i_{\varepsilon,h},\vh} =& \ska{X^{i-1}_{\varepsilon,h},\vh}-\tau \ska{\fe{X^i_{\varepsilon,h}},\nabla\vh } \nonumber \\
 & -\tau\lambda\ska{X^i_{\varepsilon,h} - g^h,\vh}+ \mu \ska{\sigma(X^{i-1}_{\varepsilon,h}) {\Delta_i W^h},\vh} &&\forall \vh \in \mathbb{V}_h\,, 
\\ \nonumber
X^0_{\varepsilon,h}  = & x_0^h\,,
\end{align}
where $g^h,\,x_0^h\in\mathbb{V}_h$ are suitable approximations of $g$, $x_0$ (e.g., the orthogonal projections onto $\mathbb{V}_h$), respectively,
and $\mu>0$ is a constant. The multiplicative space-time noise $\sigma(X^{i-1}_{\varepsilon,h})\Delta_i W^h$ is constructed as follows.
The term $W^h$ is taken to be a $\mathbb{V}_h$-valued space-time noise of the form
$$
\Delta_i {W}^h(\xx)  =  \sum_{\ell=1}^{L} \phi_\ell (x) \Delta_{i} {\beta}_\ell
\qquad \forall\,  x \in \overline{\mathcal D}\,,
$$
where ${\beta}_\ell$, $\ell=1,\dots, L$ are independent scalar-valued Wiener processes and
$\{\phi_\ell\}_{\ell=1}^L$ is the standard 'nodal' finite element basis of $\mathbb{V}_h$.
In the simulations below we employ three practically relevant choices of $\sigma$:
a tracking-type noise $\sigma(X) \equiv \sigma_1(X) = |X - g^h|$,
a gradient type noise
$\sigma(X) \equiv \sigma_2(X) = |\nabla X |$ and the additive noise $\sigma(X)\equiv \sigma_3 = 1$;
in the first case the noise is small when the solution is close to the 'noisy image' $g^h$,
in the gradient noise case the noise is localized along the edges of the image.
We note that the fully discrete finite element scheme (\ref{fem_scheme}) corresponds to an approximation of the regularized equation (\ref{reg.TVF})
with a slightly more general space-time noise term of the form $\mu \sigma(\Xe) \mathrm{d}W$.

In all experiments we set $T=0.05$, $\lambda=200$,  $x_0 \equiv x_0^h\equiv 0$.
If not mentioned otherwise we use 
the time step $\tau = 10^{-5}$, the mesh size $h = 2^{-5}$ and set $\eps=h=2^{-5}$, $\mu=1$.
We define $g \in \mathbb{V}_h$ as a piecewise linear interpolation
of the characteristic function of a circle with radius $0.25$ on the finite element mesh, see Figure~\ref{fig_data} (left), 
and set $g^h = g + \xi_h \in \mathbb{V}_h$ with $\displaystyle \xi_h(x) = \nu \sum_{\ell=1}^{L} \phi_\ell (x) \xi_\ell$, $x\in\D$
where $\xi_\ell$, $\ell=1,\dots, L$ are realizations of independent $\mathcal{U}(-1,1)$-distributed random variables.
If not indicated otherwise we use $\nu=0.1$; the corresponding realization of $\xi_h$ is displayed in Figure~\ref{fig_data} (right).
\begin{figure}[!htp]
\center
\includegraphics[width=0.3\textwidth]{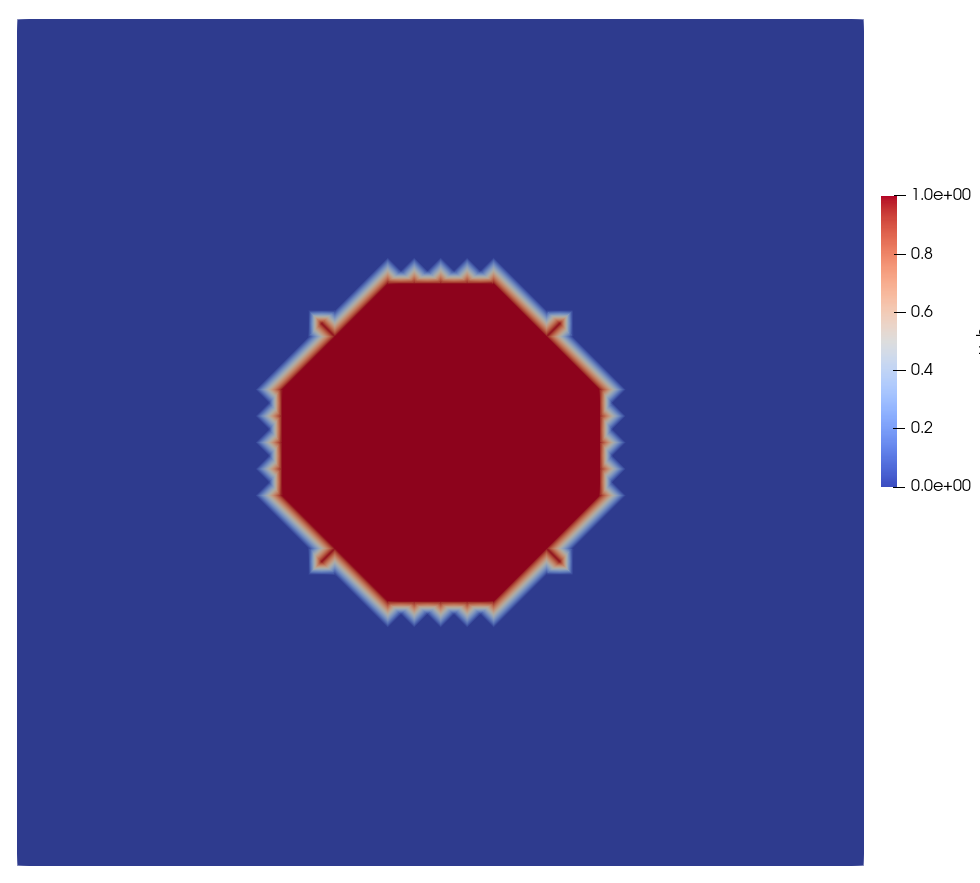}
\includegraphics[width=0.3\textwidth]{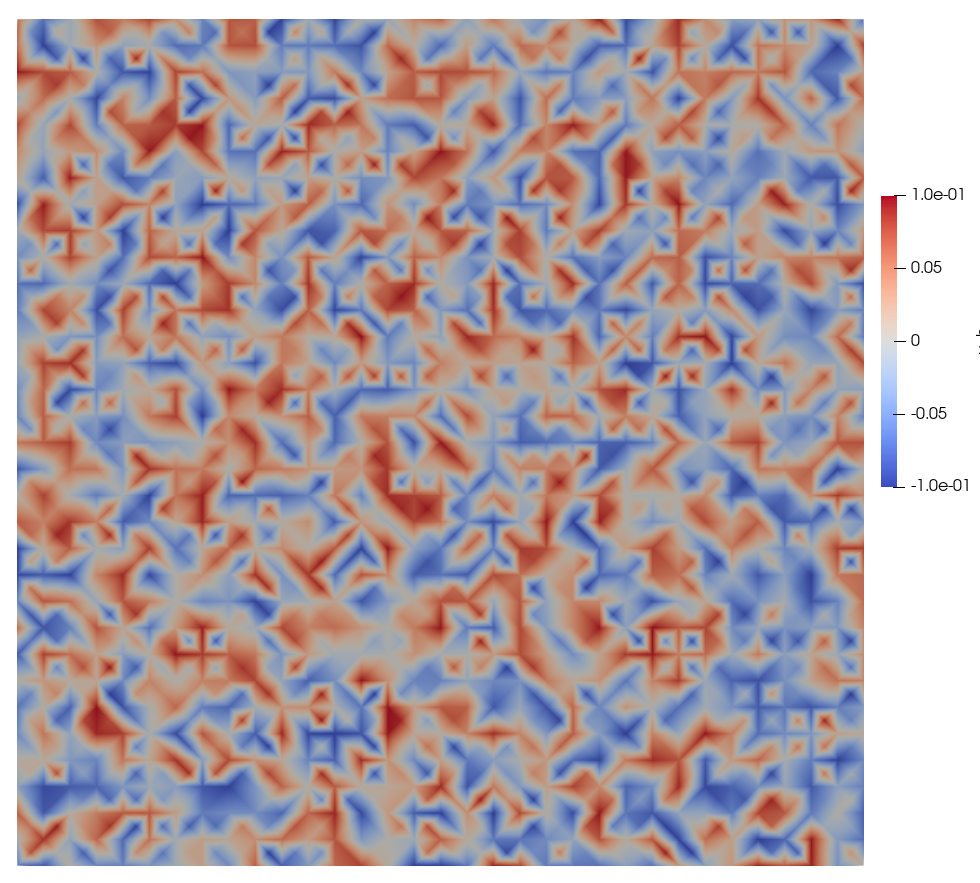}
\includegraphics[width=0.3\textwidth]{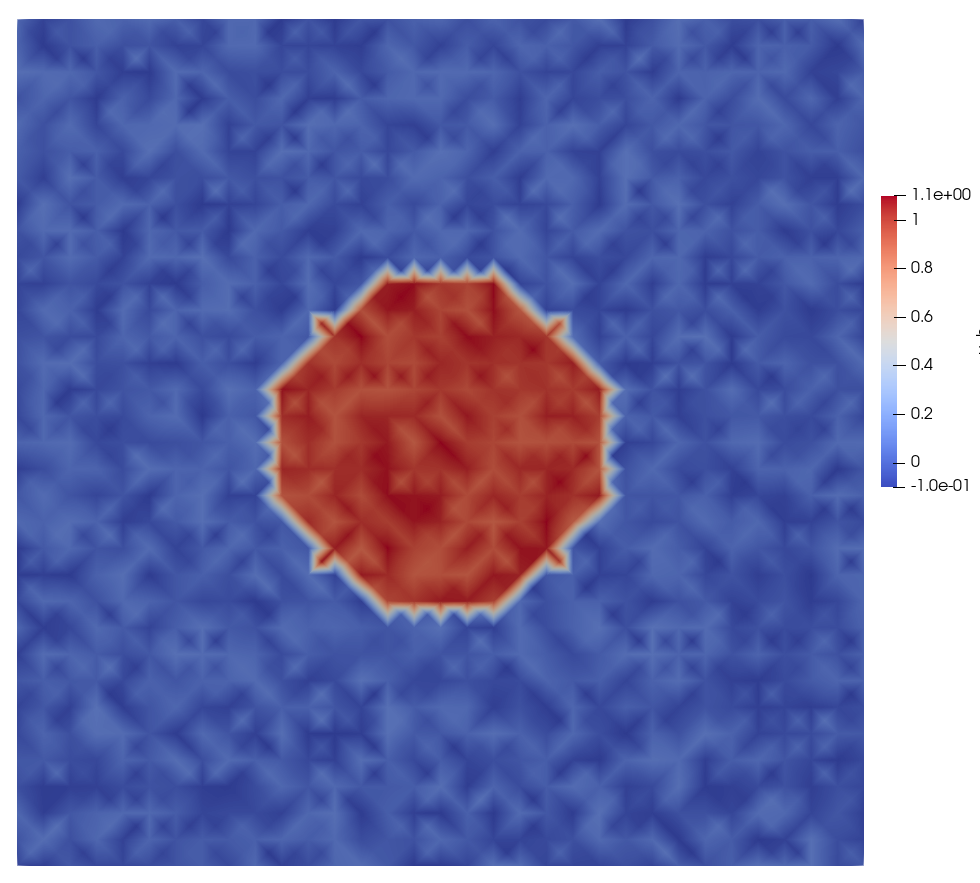}
\caption{The function $g$ (left), the noise $\xi_h$ (middle) and the noisy image (right).}
\label{fig_data}
\end{figure}

We choose $\eps=h=2^{-5}$, $\mu=1$, $\sigma\equiv \sigma_1$ as parameters for the 'baseline' experiment;
the individual parameters are then varied in order to demonstrate their influence on the evolution.
The time-evolution of the discrete energy functional $\mathcal{J}_{\eps,\lambda}(\Xi)$, $i=1,\dots,N$
for a typical realization of the space-time noise $W^h$ is displayed in Figure~\ref{fig_ener}; in the legend of the graph we state parameters
which differ from the parameters of the baseline experiment, e.g., the legend '$sigma_2,\, mu=0.125$'
corresponds to the parameters $\sigma\equiv \sigma_2$, $\mu=0.125$ and the remaining parameters are left unchanged, i.e., $\eps=h=2^{-5}$. 
For all considered parameter setups, except for the case of noisier image $\nu=0.2$, 
the evolution remained close to the discrete energy of the deterministic problem (i.e., (\ref{fem_scheme}) with $\mu=0$).
The energy decreases over time until the solution is close to the (discrete) minimum of $\mathcal{J}_{\eps,\lambda}$;
to highlight the differences we display a zoom at the graphs.
We observe that in the early stages (not displayed) the energy of stochastic evolutions with sufficiently small noise typically remained
below the energy of the deterministic problems and the situation reversed as the solution approached the stationary state.
\begin{figure}[!htp]
\center
\includegraphics[width=0.3\textwidth]{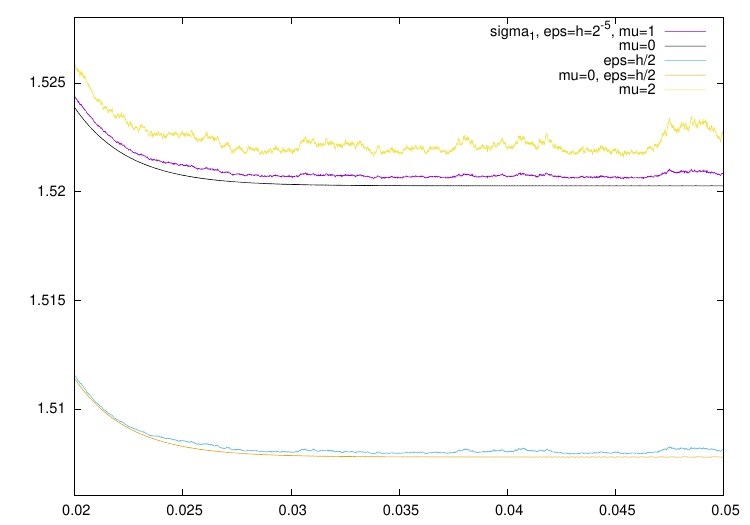}
\includegraphics[width=0.3\textwidth]{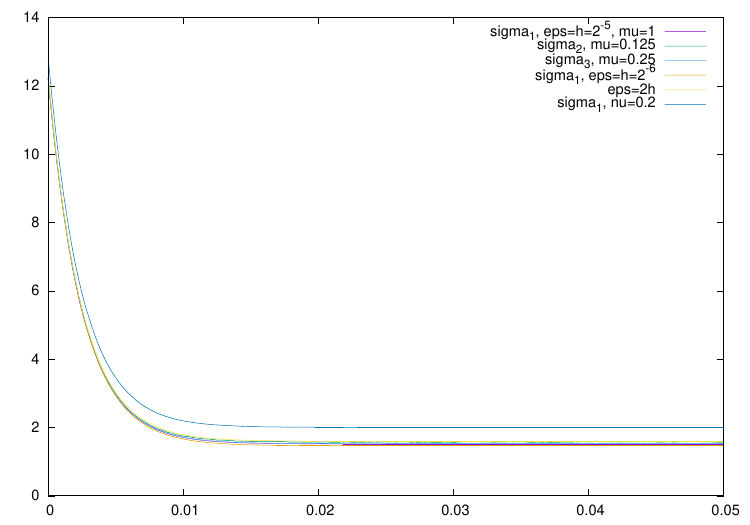}
\includegraphics[width=0.3\textwidth]{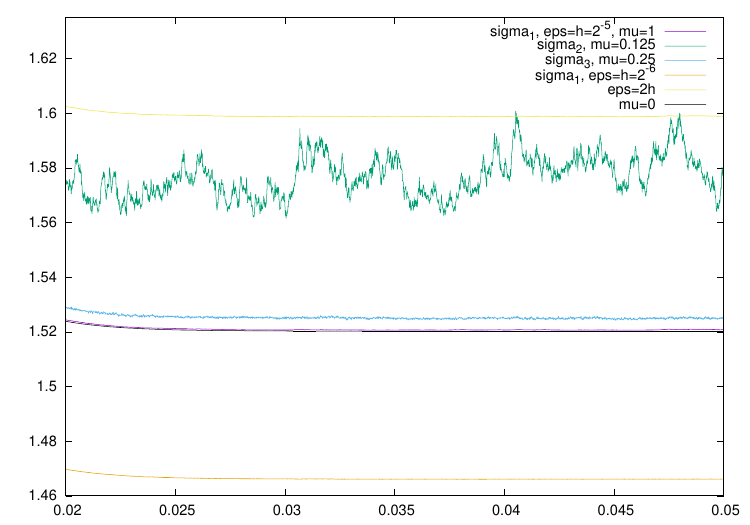}
\caption{Evolution of the discrete energy: $\sigma\equiv\sigma_1$, $h=2^{-5}$, $\eps=h,\frac{h}{2}$, $\mu=1,2$ (left); 
$\sigma\equiv\sigma_1,\sigma_2,\sigma_3$, $\sigma\equiv\sigma_1$, $h=2^{-5}$, $\eps=2h$, $\sigma=\sigma_1$, $\eps=h=2^{-6}$ and $\nu=0.2$ (middle and right).
}
\label{fig_ener}
\end{figure}

In Figure~\ref{fig_finalu} we display the solution at the final time
computed with $\sigma \equiv \sigma_1$, $\eps=h$ for $h=2^{-5}, 2^{-6}$, respectively,
and $\sigma \equiv \sigma_2$, $\eps=h=2^{-5}$; graphically the results of the remaining simulations 
did not significantly differ from the first case. 
The displayed results may indicate that the noise $\sigma_2$ yields worse results than the noise $\sigma_1$ and $\sigma_2$;
however, for sufficiently small value of $\mu$ the results would remain close to the deterministic simulation as well.
We have magnified noise intensity $\mu$ to 
highlight the differences to the other noise types (i.e., the noise is concentrated along the edges of the image).
We note that the gradient type noise $\sigma_2$ might be a preferred choice for practical computations, cf. \cite{swp14}.
\begin{figure}[!htp]
\center
\includegraphics[width=0.3\textwidth]{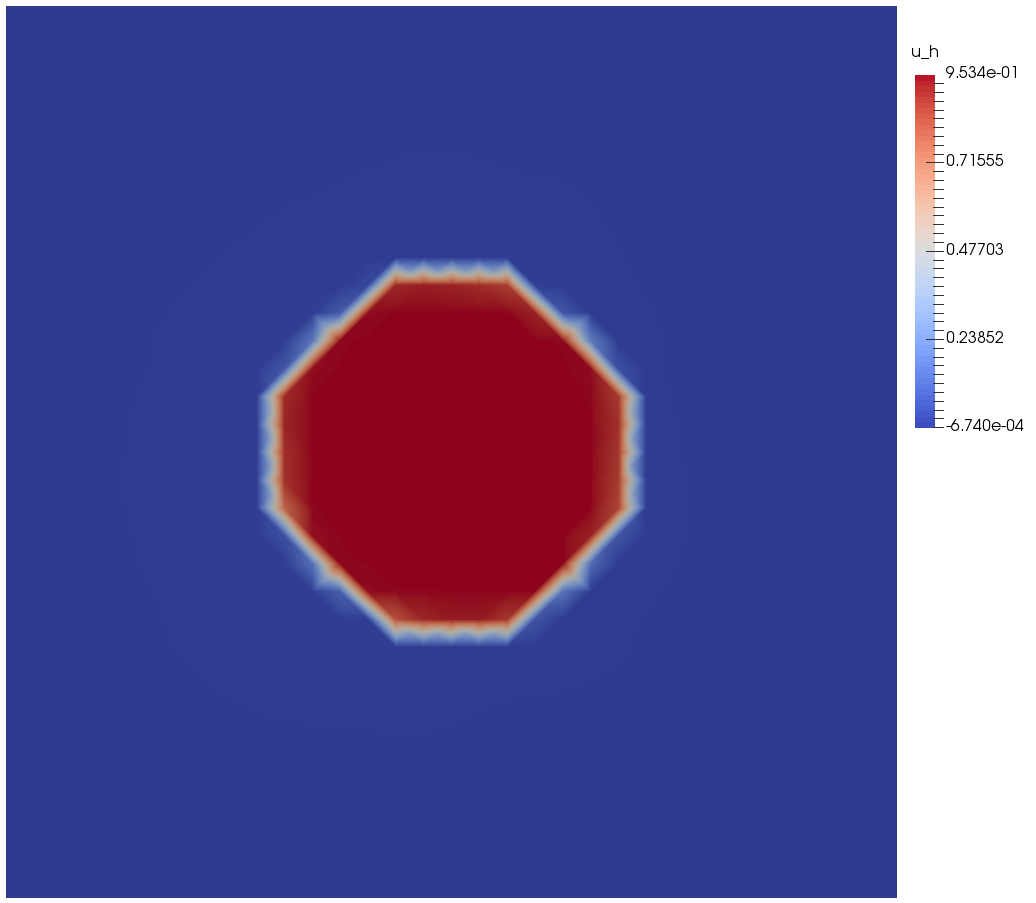}
\includegraphics[width=0.3\textwidth]{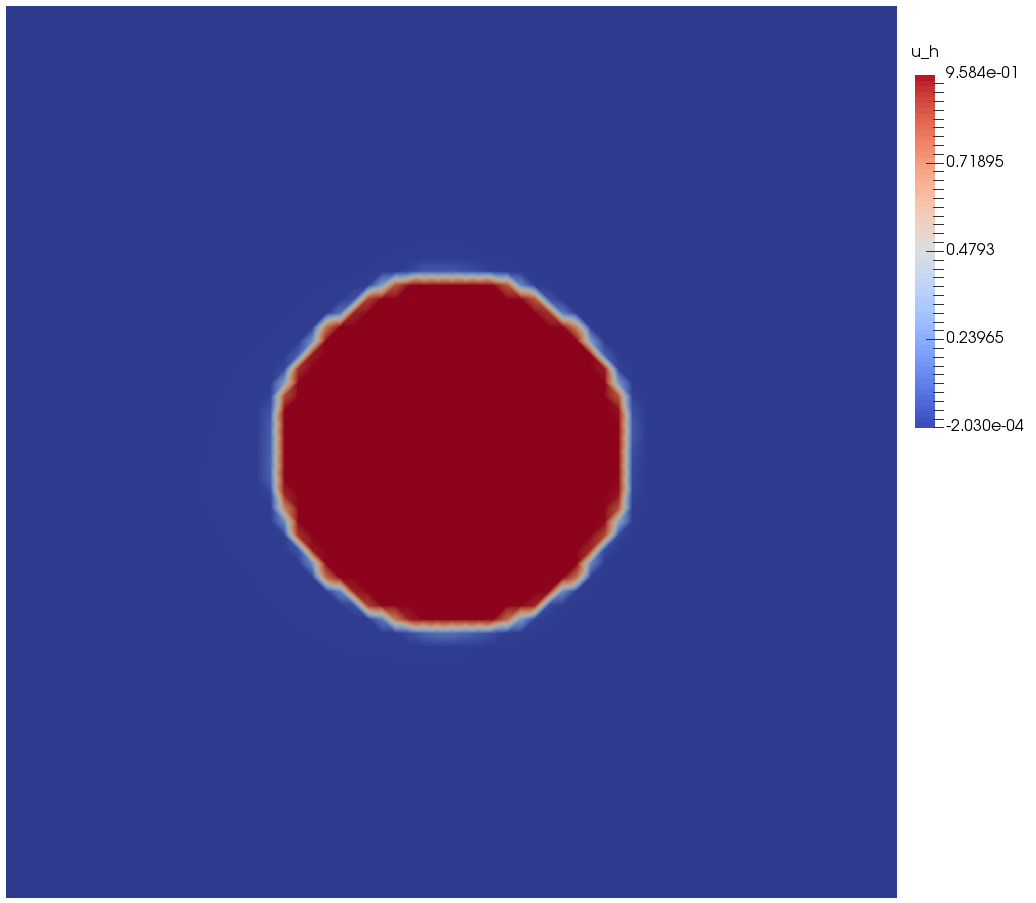}
\includegraphics[width=0.3\textwidth]{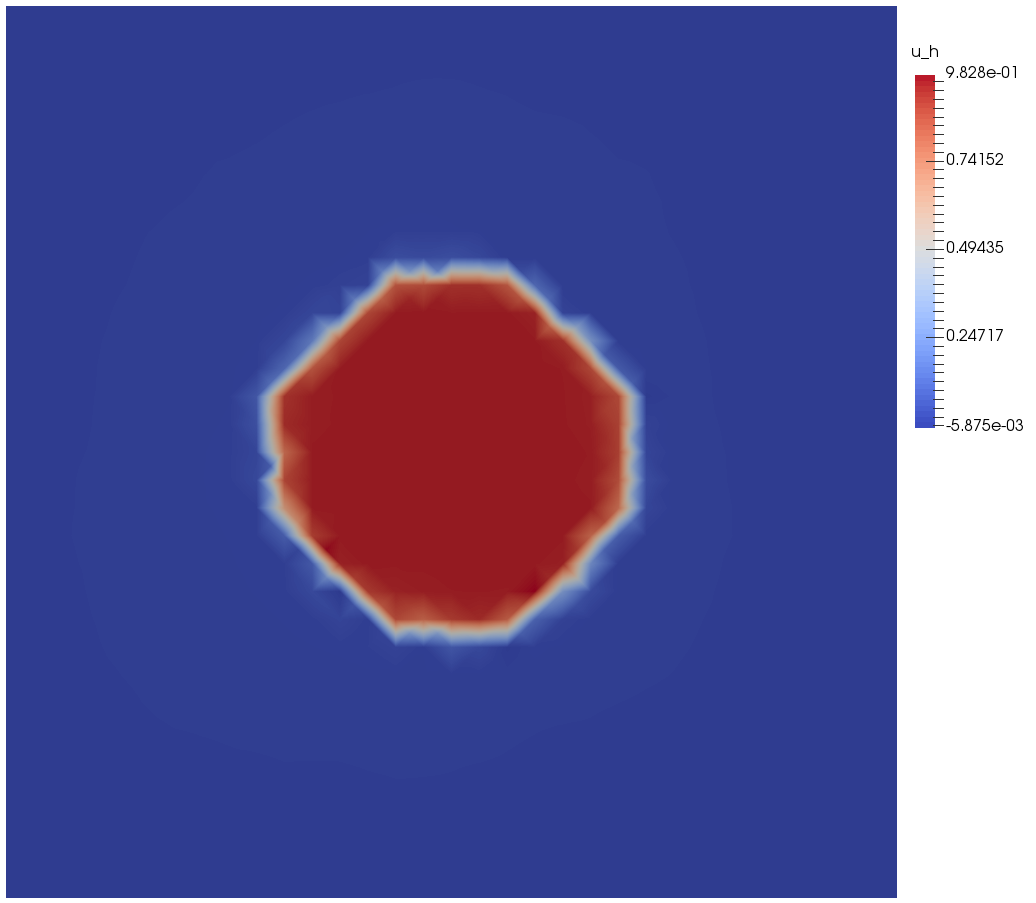}
\caption{From let to right: 
the solution for $\sigma \equiv \sigma_1$ with $\eps= h=2^{-5}$,
$\sigma \equiv \sigma_1$ with  $\eps= h =2^{-6}$ and
$\sigma \equiv \sigma_2$ with $\eps=2^{-5}$.}
\label{fig_finalu}
\end{figure}

\section*{Acknowledgement}
This work was supported by the Deutsche Forschungsgemeinschaft through SFB  1283  “Taming  uncertainty  and  profiting  from  randomness  and  low  regularity  in  analysis, stochastics and their applications”. 
The authors would like to thank the referee for careful reading of the manuscript and constructive comments, as well as to Lars Diening for stimulating discussions.
We would also like to thank Martin Ondrej\'at for pointing to us inaccuracies in the proof of uniqueness in Theorem~\ref{Thm_reg.SVI}.

\bibliographystyle{plain}
\bibliography{refs_short}
\end{document}